\documentclass[a4paper]{amsart}

\usepackage[english]{babel}
\usepackage[utf8]{inputenc}
\usepackage{amssymb,amsmath,amsthm,amsfonts}
\usepackage{graphicx}
\usepackage[usenames,dvipsnames]{xcolor}
\usepackage{cancel}
\usepackage[normalem]{ulem}
\usepackage{dsfont}
\usepackage{enumitem}
\usepackage{microtype}
\usepackage[showonlyrefs]{mathtools}
\mathtoolsset{showonlyrefs=true}

\usepackage{tikz}
\usetikzlibrary{matrix, calc, positioning, decorations.pathreplacing, arrows.meta}
\usepackage[colorlinks]{hyperref}

\newcommand{\N}{\mathbb{N}}

\newcommand{\R}{\mathbb{R}}

\newcommand{\eps}{\varepsilon}

\newcommand{\Om}{\Omega}

\newcommand{\mean}[1]{\,-\hskip-1.08em\int_{#1}} 

\newcommand{\Ecal}{{\mathcal{E}}}

\newcommand{\resmeas}{\mathbin{\vrule height 1.6ex depth 0pt width 0.13ex\vrule height 0.13ex depth 0pt width 1.3ex}} 

\newcommand{\supp}{\text{supp}}
\renewcommand{\H}{H^1_0}
\renewcommand{\phi}{\varphi}

\def\Omt{\widetilde{\Om}}
\def\Omh{\widehat{\Om}}
\def\mh{\widehat{m}}

\def\N{\mathbb{N}}

\def\ut{\widetilde{u}}
\def\uh{\widehat{u}}

\def\de{\delta}
\def\hc{\mathcal{H}}

\newtheorem{proposition}{Proposition}[section]
\newtheorem{theorem}[proposition]{Theorem}

\newtheorem{lemma}[proposition]{Lemma}
\theoremstyle{definition}
\newtheorem{definition}[proposition]{Definition}
\newtheorem{remark}[proposition]{Remark}

\title[Hartree--Ohta-Kawasaki in the small mass regime]{Existence and Regularity in the Small-Mass Regime for a Hartree--Ohta-Kawasaki Shape Optimization Problem}

\author{Dario Mazzoleni}
\address{Dipartimento di Matematica F. Casorati\\
		Universit\`a di Pavia\\
		Via Ferrata 5, 27100 Pavia, Italy}
\email{dario.mazzoleni@unipv.it}

\author{Riccardo Moraschi}
\address{Dipartimento di Matematica F. Casorati\\
		Universit\`a di Pavia\\
		Via Ferrata 5, 27100 Pavia, Italy}
\email{r.moraschi1@campus.unimib.it}

\author{Berardo Ruffini}
\address{Dipartimento di Matematica, Universit\`a di Bologna, Piazza di Porta, S.Donato
5, 40126, Bologna-Italy}
\email{berardo.ruffini@unibo.it}

\begin{document}

\begin{abstract}
We consider a shape optimization problem for a hybrid energy combining local confinement and nonlocal Coulomb repulsion. Specifically, for any open set $\Omega \subseteq \R^3$ of prescribed volume, we consider the ground state energy of an $L^2$-normalized function supported in $\Omega$, defined as a linear combination of its homogeneous $\dot{H}^1$ and $\dot{H}^{-1}$ seminorms. We show that in the small mass regime, volume-constrained minimizers of this geometric functional exist and are $C^{2,\alpha}$ perturbations of a ball. The proof relies on a combination of surgery techniques, $\Gamma$-convergence, elliptic PDE theory, and one-phase free boundary regularity.

A key novelty of this paper lies in the treatment of the Coulombic repulsive term: unlike standard competitive models, the lack of (a priori) sign constraints on the optimal functions forces the nonlocal term to exhibit two natures: it acts both as a \emph{scattering} and an \emph{homogenizing} force. 
\end{abstract}

	\maketitle

	\tableofcontents

    \smallskip

\noindent \textbf{Keywords}: Shape Optimization, Ohta-Kawasaki, Spectral surgery, $\Gamma$-convergence, One-phase free boundary regularity.

\smallskip

\noindent \textbf{MSC(2020)}: 35R35, 49Q10, 47J10, 49R05. 
	
\section{Introduction}\label{intro}
\subsection{Foreword}
Consider the energy functional
\[
\widetilde E(\Omega)=\min \left\{  [u]^2_{\dot{H}^1}+[u]^2_{\dot{H}^{-1}}  \,:\, \|u\|_{L^2(\Omega)}=1,\,\, u\in H^{1}_0(\Om) \right\}.
\]
We aim to study the minimizers of $\widetilde E$ among open subsets of $\R^3$ with mass constraint $|\Omega|=m$. 
Since we are interested in an optimal design problem where the set $\Omega$ is free to move on $\R^3$, it is convenient to consider the seminorm as defined on the whole space, that is, we extend  $u\in H^1_0(\Omega)$ to zero outside $\Omega$ and  let, up to a multiplicative constant, \[
[u]^2_{\dot{H}^1}=\int_\Omega |\nabla u|^2\,dx,
\qquad
 \text{ and } 
\qquad 
[u]^2_{\dot{H}^{-1}}=\int_{\R^3}\int_{\R^3}\frac{u(x)u(y)}{|x-y|}\,dxdy=\int_{\Omega}\int_{\Omega}\frac{u(x)u(y)}{|x-y|}\,dxdy.
\]
Then, we define
\begin{equation}\label{eq: EqOmegau}
	E_q(u,\Omega):= \int_\Om |\nabla u(x)|^2\, dx+\frac{q}{2}\int_\Om \int_\Om \frac{u(x)u(y)}{|x-y|}\, dxdy
\end{equation}
and we consider the energy functional
\begin{equation}\label{eq: EqOmega}
	E_q(\Om):= \min \left\{E_q(u,\Omega): u\in \H(\Om),\; \int_\Om u^2(x)\, dx=1 \right\}.
\end{equation}
Up to take $q=2\Big(\frac{m}{|B_1|}\Big)^{4/3}>0$, thanks to a scaling argument, the original problem is equivalent to 
\begin{equation}\label{eq: inf EqOmega}
	\min \left\{ E_q(\Om): \Om \subseteq \R^3,\; \text{open, }\; |\Om|=|B_1| \right\}.
\end{equation}
We are interested in this paper in the small mass regime $m\ll1$, that is, $q\ll1$.
\vspace{.3cm}

When $q=0$, $E_q(\Omega)$ is the first eigenvalue of the Dirichlet Laplacian. The well-known Faber-Krahn inequality assures that it is uniquely minimized by balls, up to null capacity, among sets of fixed volume. The case $q>0$ is less clear, as the second addend in the energy may act as a scattering term. Note in particular that among \emph{positive} functions, the second term in the energy is maximized by balls, by the Riesz rearrangement inequality, see \cite{LiebLoss}; instead,  the minimization of the second term alone without sign constraints leads to an  ill posed problem, as it is shown in Appendix~\ref{appendix}. In this case we expect the positive and negative parts of the optimal functions to homogeneize in order to favor cancellations, see also Section~\ref{sec:motivation} below. These formal observations reveal a sharp contrast between the two addends in the energy. 

Nevertheless, also in view of the recent results on the stability of isoperimetric inequalities \cite{FuMaPra2008,FiMaPra2010,CicLeo2012},  and their spectral counterparts \cite{BrascoDePhilippisVelichkov,DePhilippisMariniMukoseeva}, and since the ball is a critical point for the nonlocal term, it can be expected that for $q$ sufficiently small the ball is a stable minimizer. 

In  this paper we  show that in the small mass regime well posedness is restored. Namely we show that minimizers exist and that they do converge in $C^{2,\alpha}-$norm to a ball. Specifically, this is the main result of the paper.

\begin{theorem}\label{thm:main}
There exist universal\footnote{In this paper we call \emph{universal constant} a constant possibly depending only on the dimension of the ambient space, which, in our case, is $d=3$.}  constants $q^*>0$ and
          $\alpha\in (0,1)$ such that, for all $0 < q \leq q^*$,
          there exists an optimal set for
          problem~\eqref{eq: inf EqOmega}. Furthermore, every optimal
          set $\Omega_q$ is $C^{2,\alpha}$-nearly spherical, namely there is a function
          $\varphi_q\colon \partial B_1 \to \R$ of class
            $C^{2,\alpha}$ such that
          $\|\varphi_q\|_{C^{2,\alpha}}$ vanishes as $q\to 0$ and
		\[ \partial\Omega_q=\Big\{( 1 +\varphi_q(x)) x : x\in \partial B_1
		\Big\}.
		\]
\end{theorem}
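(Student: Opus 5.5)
The plan follows the scheme that has become standard for spectral optimisation problems with a perturbing term: existence of a minimiser in a relaxed class first, then convergence to the ball, and finally regularity via one-phase free boundary theory.

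\textbf{Existence.} We work in the class of quasi-open (or measurable) sets with $|\Omega|\le |B_1|$. Monotonicity under inclusion fails because of the nonlocal term, so we replace it with a penalised version of the volume constraint. Minimising sequences may split mass at infinity. We rule this out by a concentration-compactness argument combined with a surgery lemma. If an admissible set decomposes into far-apart pieces, the Coulomb interaction between them is $O(q/\mathrm{dist})$. Collapsing the pieces, or keeping only the piece carrying most of the $L^2$ mass and dilating it back to volume $|B_1|$, then lowers the energy. This uses that $E_q(\Omega)$ differs from $\lambda_1(\Omega)$ by at most $Cq$ for sets of bounded volume; indeed $|[u]^2_{\dot H^{-1}}|\le C\|u\|_{L^2}^2|\Omega|^{2/3}$ by Hardy--Littlewood--Sobolev. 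Following Buttazzo--Dal Maso in the style of Bucur, we obtain a minimiser that is bounded with diameter controlled universally. We then show it is an open set. Since $u$ solves
\[
-\Delta u+q\,(|x|^{-1}*u)\,u=\lambda u,
\]
where the potential $q\,|x|^{-1}*u$ is bounded, $u$ is Lipschitz and $\Omega=\{u\neq0\}$ is open.

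\textbf{Convergence to the ball.} We compare with $B_1$:
\[
\lambda_1(\Omega_q)-Cq\le E_q(\Omega_q)\le E_q(B_1)\le \lambda_1(B_1)+Cq.
\]
The quantitative Faber--Krahn inequality of Brasco--De Philippis--Velichkov then gives $|\Omega_q\triangle B_1|\to0$ up to translation. $\Gamma$-convergence, or directly the $L^2$ convergence $u_q\to \pm u_{B_1}$, upgrades this to convergence of the eigenfunctions. A key point is to show that $u_q$ does not change sign for small $q$. We expect this because $\lambda_2(\Omega_q)-\lambda_1(\Omega_q)$ stays bounded away from zero while the Coulomb gain from homogenisation is $O(q)$; the first eigenfunction component dominates, and a sign-changing competitor costs a gap of order one. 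Once $u_q$ is signed, $u_q\ge0$ is a minimiser of a one-phase-type problem.

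\textbf{Regularity.} We show $u_q$ is a quasi-minimiser of the Alt--Caffarelli functional
\[
\int|\nabla u|^2+\Lambda_q|\{u>0\}|,
\]
with a lower-order error. To do this we use the Lagrange multiplier for the volume, obtained by internal and external perturbations as in Mazzoleni--Terracini--Velichkov, and treat the term $q\,(|x|^{-1}*u)u$ as a bounded right-hand side; the potential is Lipschitz, even $C^{1,\alpha}$. This yields Lipschitz continuity, nondegeneracy, and density estimates, uniformly in $q$. Uniform nondegeneracy together with $L^1$ convergence upgrades to Hausdorff convergence of the boundaries to $\partial B_1$. Flatness-implies-$C^{1,\alpha}$ (De Silva) then shows $\partial\Omega_q$ is a $C^{1,\alpha}$ graph over $\partial B_1$ converging to zero. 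The optimality condition
\[
|\nabla u_q|^2=\Lambda_q \quad\text{on } \partial\Omega_q,
\]
with a $C^{1,\alpha}$ right-hand side in the PDE, gives via Kinderlehrer--Nirenberg--Spruck / Schauder bootstrapping a $C^{2,\alpha}$ graph, and convergence in $C^{2,\alpha}$ by compactness.

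\textbf{Main obstacle.} The hardest step is the sign issue, together with the existence step. Without positivity, the $\dot H^{-1}$ term can be negative, and the mass may in principle oscillate. The uniform spectral gap argument above has to be made quantitative on sets that are only $L^1$-close to the ball. We would first try writing $u=u^+-u^-$ and testing with $|u|$, which changes the Coulomb term by at most $Cq$. We would then use the strict gap $\lambda_2(B_1)>\lambda_1(B_1)$, which passes to $\Omega_q$ through $\gamma$-convergence, to force $\|u^-\|_{L^2}\to0$. After that, a local argument is needed to remove $u^-$ entirely.
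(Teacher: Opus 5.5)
\textbf{The gap is the sign step.} This is exactly the paper's new ingredient. Your plan stops at $\|u_q^-\|_{L^2}\to0$ and then defers to ``a local argument'' that is never given, and the tools you propose cannot supply it.

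Testing with $|u|$ is vacuous. $|u|$ has the same support, Dirichlet energy and $L^2$ norm as $u$, while $D(|u|,|u|)=D(u,u)+4D(u^+,u^-)\ge D(u,u)$, so minimality of $u$ is never contradicted. Note also that $D(u,u)\ge 0$ always; sign changes lower it only through the cross term $-2D(u^+,u^-)$, which is the homogenizing effect.

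Spectral information on the set $\Omega_q$ (gap, $\gamma$-convergence) cannot conclude either, because positivity genuinely fails for the fixed-domain problem. The first variation of $\tfrac{q}{2}D(u,u)$ is $q\int v_u\varphi$, so the Euler--Lagrange equation is $-\Delta u=\lambda u-\tfrac{q}{2}v_u$. This has a source term that is strictly negative up to $\partial\Omega$; it is not the potential term $q\,v_u\,u$ you wrote. Near, e.g., a thin conical point of a fixed domain, the first eigenfunction vanishes faster than $|x|^2$. Comparing with the homogeneous torsion function of the cone then shows that the fixed-domain minimizer is negative near the tip for every small $q>0$. So the optimality of the \emph{shape} must be used.

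\textbf{How the paper closes it.} It compares $u$ with $u^+$ in the penalized problem $\mathcal{E}_{q,M,\eta}$. Discarding the negative phase frees the volume $|\{u<0\}|$, and since $f_\eta$ has slope at least $\eta$ this gains $\eta|\{u<0\}|$. The losses are at most $M\|u^-\|_{L^\infty}^2|\{u<0\}|$ from the $L^2$ penalty and $qD(u^+,u^-)\le C\|u^-\|_{L^\infty}|\{u<0\}|$ by Hardy--Sobolev. Hence $|\{u<0\}|=0$ as soon as $\|u^-\|_{L^\infty}$ is small.

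That smallness comes from $\Gamma$-convergence to $\pm u_{B_1}$ together with a $q$-independent $C^{0,1/2}$ bound. The bound is obtained by showing that $u$ is a Giaquinta--Giusti quasi-minimizer of the Dirichlet energy in every small ball (error $C(\eta)r^3$) and applying Campanato's criterion, which works across the free boundary.

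Your $L^2$ smallness could in fact be fed into the same $u^+$ comparison instead, using $\int u^-\le C|\{u<0\}|^{5/6}\|\nabla u^-\|_{L^2}$ and Faber--Krahn on $\{u<0\}$. What is missing from your plan is the $u^+$ competitor together with the volume gain.

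\textbf{Secondary corrections.} $E_q$ \emph{is} monotone under inclusion, being a minimum over $H^1_0(\Omega)$. Lipschitz continuity and openness in your existence step do not follow from the equation alone. They need the minimality of the shape (Brian\c{c}on--Hayouni--Pierre type bounds on $\Delta u$) and, while $u$ may change sign, a two-phase argument; this is why the paper proves them only after positivity.
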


\subsection{Motivation and background}\label{sec:motivation}

The functional we study can be viewed as an analytical hybrid model combining a spectral confinement energy with a long-range Coulombic self-interaction. From this perspective, the optimization over the shape 
$\Omega$ describes the search for geometries minimizing the energy of a confined state under weak nonlocal repulsion. 

\vspace{.2 cm}
The competition between a local regularizing term and a nonlocal long-range interaction is a standard feature in pattern formation and microphase separation, as it arises in actual models such as optimal design of charged quantum devices \cite{MazzoleniMuratovRuffini} or Ohta-Kawasaki models, see \cite{ChoksiPeletier,ACO09,DR26} and references therein.
  Let us make a comparison between our case and the well-known Ohta-Kawasaki energy for diblock copolymers. Analytically, our problem acts as a diffuse spectral counterpart to the sharp-interface model studied in the seminal work of Alberti, Choksi, and Otto \cite{ACO09}. In the classical Ohta-Kawasaki setting, the competition between a local perimeter term  and a nonlocal Coulombic repulsion under an $L^1$ mass constraint leads to pattern formation for large masses. In our functional, the local regularizing role of the perimeter is carried out  by the Dirichlet energy.

The lack of an a priori sign constraint on the admissible functions $u \in H^1_0(\Omega)$ implies a deep difference in how the system reacts to the nonlocal repulsion. In the regime  $q\gg1$ (corresponding to the large mass regime in \cite{ACO09}), the Coulombic term dominates so that, to minimize this repulsive term, the energy does not just separate the domain into periodic patterns as in \cite{ACO09}, but the optimal function $u$ tends to oscillate between positive and negative values to maximize cancellations in the double integral. 
In this regime, the nonlocal term acts as a strongly homogenizing force. We show, at least formally, this phenomenon in Appendix A.

Conversely, for $q\ll1$, the spectral confinement dominates the behavior of the energy. The optimal function is a posteriori forced to be strictly positive, as the energy is close, for $q\ll1$, to the first Dirichlet eigenvalue. At this point, as the positivity is established, the Coulombic term changes completely its nature, and acts as  a (more standard) scattering force.

\vspace{.2cm}

One expects that in this weak-coupling regime ($q\ll1$), the spectral term dominates, making the ball optimal. 
 A common strategy to prove such a stability for local/nonlocal interactions was first exploited in \cite{KM1,KM2}, see also \cite{CiCSpa}, following the technique -referred to as \emph{selection principle}- developed in \cite{CicLeo2012} in order to prove the quantitative isoperimetric inequality.
Such a strategy roughly consists of dividing the proof into two steps: show rigidity (i.e. that the ball is the unique minimizer) in a suitably chosen class of regular competitors, the nearly spherical sets, and prove independently that a minimizer (or any minimizing sequence) lies in such a class.

In this paper we tackle the latter part of this plan: Theorem~\ref{thm:main} shows that all minimizers are a $C^{2,\alpha}$-small perturbation of a ball. In fact, a fundamental byproduct of our proof is that in the small$-q$ regime, the optimal functions in $\eqref{eq: EqOmegau}$ are positive so that the ball is in fact a critical point of the energy, thanks to P\'olya-Szeg\"o inequality and Riesz rearrangement inequality. This, together with preliminary computations, leads us to formulate the following conjecture.

\vspace{.4cm}

\begin{center}
\emph{
{\bf Conjecture:}
There exists a universal constant $q_0>0$ such that for $ q<q_0$ balls are rigid minimizers for Problem \eqref{eq: inf EqOmega}.}
\end{center}

\vspace{.4cm}

The above conjecture deserves some comments. In order to show stability of the ball among nearly spherical sets, there are two techniques available that we are aware of. Either to perform a second order shape derivative of the full energy and to show that this is positive in a suitable neighborhood of a ball, or to show that the Euler-Lagrange equation for a minimizer produces an overdetermined problem which can be satisfied only by balls. The first idea has been successfully exploited in several recent results, but only in the case in which the two addends in the functional are independent shape functionals, i.e. when the full energy is the sum of an aggregating shape functional term $\mathcal A$  and a repulsive one $\mathcal{B}$, i.e. of the form
\[
\Omega\mapsto \mathcal A(\Omega)+\mathcal B(\Om). 
\]
See, for instance, \cite{KM2,GNR2025,GNR1,BrascoDePhilippisVelichkov,MazzoleniRuffini}. However, the derivation of a second order shape derivative for nonlocal functionals is a challenging open problem.
The second strategy appears hard, too. Indeed, despite the huge literature developed from the pioneering works by Serrin and Weinberger \cite{Serrin, Weinberger}, there is not a clear understanding of how to approach overdetermined problems with competing non-local terms. Even the new free boundary approaches recently developed for example in~\cite{FigalliZhang} do not seem to work in our setting.

\vspace{.3cm}

\subsection{Detailed strategy}

In order to make the reading more clear, we offer here below the detailed outline of the strategy of the proof of Theorem \ref{thm:main}. As mentioned just above, this  is inspired by that developed in \cite{MazzoleniMuratovRuffini,MazzoleniRuffini} with a major technical difference due to the fact that an optimal function $u$ is not a priori positive.  
Neglecting the spectral confinement, i.e. formally picking $q=+\infty$, a minimizing sequence driven only by the nonlocal term would seek to maximize cancellations, leading to a homogenization problem. This sign-changing phenomenon completely prevents the direct application of standard shape optimization techniques or classical overdetermined problem strategies. To bridge this gap, we develop a  strategy mixing $\Gamma$-convergence and quasi-minimizer regularity \`a-la Giaquinta-Giusti.

Hence, this marks a substantial difference between our stability result and \emph{all} optimal design problems with competitive terms that we are aware of in the literature. In order to deal with this issue, we need to strongly use the fact that $\Omega$ solves a minimization problem, and often merge standard variational techniques with PDE ones. 

The key points of our strategy are the following:
\begin{itemize}
\item[1.] First, we prove some basic properties of optimal functions for $E_q(\Omega)$, in particular that they are uniformly bounded when $q\in(0,1]$, and we show that we can get rid of the $L^2$ integral constraint by adding a suitable Lagrange multiplier.
\item[2.] Then, we prove a surgery result that allows to work in an equibounded setting, that is, where the competing shapes have equibounded diameter. This result is somehow independent from all of the other parts of the proofs. This part is quite long and technical but essentially follows the surgery technique developed in \cite{MazzoleniPratelli} with some nontrivial technical modifications.
\item[3.] After this, we follow a nowadays classical approach which was first developed by Aguilera, Alt and Caffarelli in \cite{AguileraAltCaffarelli} to get rid of the measure constraint.
\item[4.] At this point we work with a complete  unconstrained functional. Now, with a simple yet delicate argument mixing up $\Gamma$-convergence tools and a notion of quasi-minimizers for the Dirichlet energy \`a-la Giaquinta-Giusti, we are able to prove that the optimal functions $u_q$ for optimal shapes $\Omega_q$ can be chosen to be non-negative, if $q$ is small enough.

\end{itemize}

\vspace{.3cm}

The last point is  a necessary bridge. 
The remaining part of the scheme of the proof  is as follows. We prove regularity of the boundary by means of regularity tools developed in the one-phase free boundary theory. 
Also the understanding of this second  part  may be eased by a detailed strategy.  

\vspace{.3cm}

\begin{itemize}
\item[5.] We begin with some regularity result obtained, as mentioned above, via  tools of the regularity for one-phase free boundary problems: existence of minimizers, finiteness of their perimeter,  their non-degeneracy, and Lipschitz regularity of the optimal functions. 
\item[6.] Now, via a scaling argument we are ready to show the equivalence between the original problem with the measure constraint and this unconstrained problem (for $q$ small).
\item[7.] At this point we perform a first shape variation argument around points of the reduced boundary (which covers almost all of the boundary, as our sets are of finite perimeter after point 5.) to show an optimality condition at the free boundary. This may be read as a weak Euler-Lagrange equation for the energy. 
\item[8.] Finally we  apply the improvement of flatness machinery by Alt and Caffarelli to prove the main result of the paper \cite{AltCaffarelli}. The regularity, together with a simple localization argument which is a consequence of the quantitative stability of the Faber-Krahn inequality \cite{BrascoDePhilippisVelichkov}, will be enough to  prove  Theorem \ref{thm:main}.
\end{itemize}

Figure~\ref{fig:schema_matematico} represents the tree of the steps of the proof detailed here above.


\definecolor{boxbg}{HTML}{F8F7FF}
 \definecolor{boxborder}{HTML}{7A6BA1}



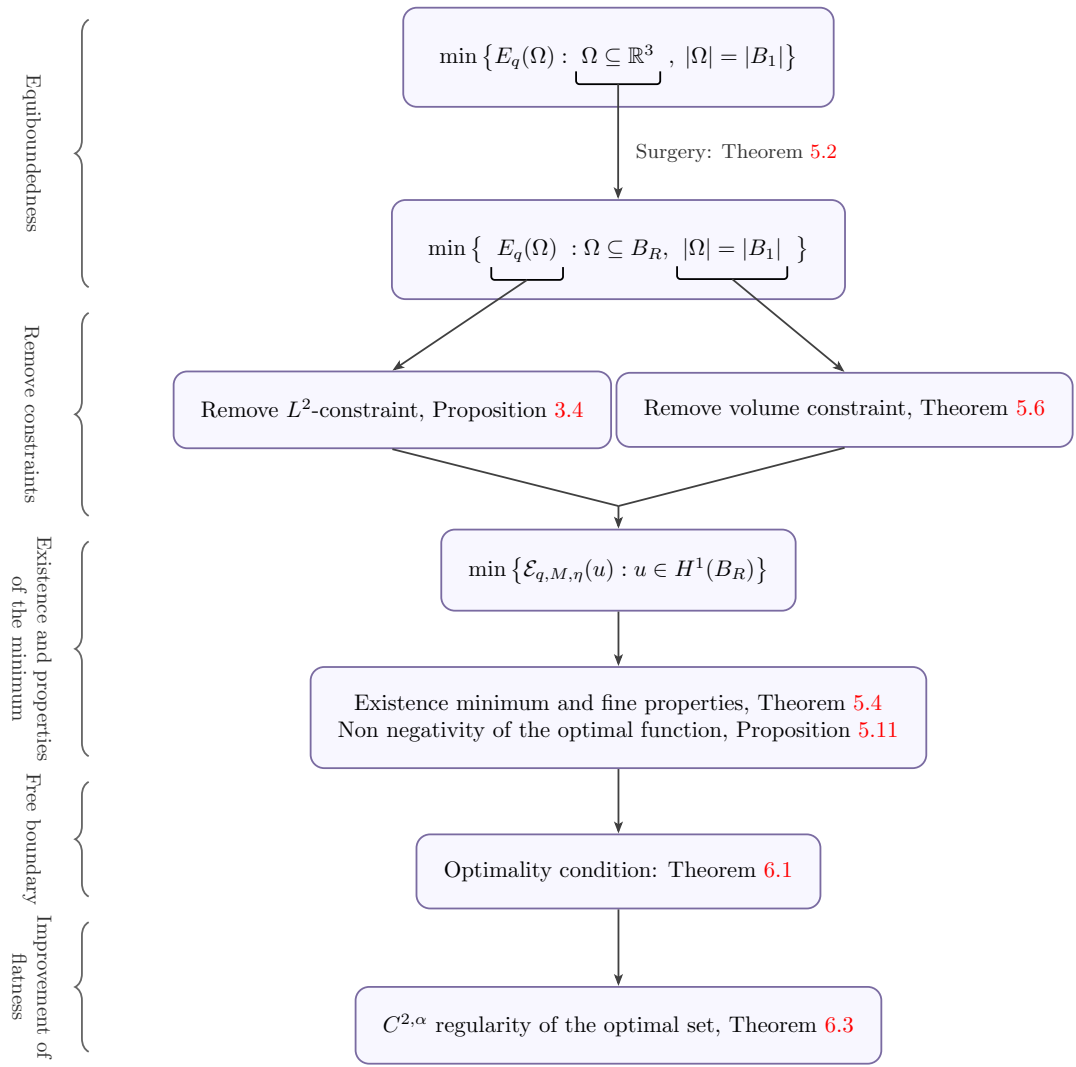
\begin{figure}[htbp]
    \centering
    \scalebox{0.85}{
    \begin{tikzpicture}[
        >={Stealth[scale=0.8, inset=1.5pt]},
        txt/.style={align=center},
        pbox/.style={
            draw=boxborder, thick, rounded corners=6pt,
            fill=boxbg, inner sep=12pt, align=center
        },
        pmatrix/.style={
            matrix of math nodes,
            ampersand replacement=\&,
            draw=boxborder, thick, rounded corners=6pt,
            fill=boxbg, inner sep=14pt, column sep=2pt,
            nodes={inner sep=2pt}
        },
        arr/.style={->, thick, draw=black!75},
        line/.style={thick, draw=black!75},
        sidebrace/.style={
            decorate, decoration={brace, amplitude=6pt},
            thick, draw=black!60
        },
        sidetxt/.style={
            midway, xshift=-25pt, rotate=-90, align=center, font=\small\color{black!80}
        }
    ]

    \matrix (m1) [pmatrix] at (0, 0) {
        \min \big\{ E_q(\Omega) : \& \Omega \subseteq \mathbb{R}^3 \& ,\; \vert\Omega\vert = \vert B_1\vert \big\} \\
    };
    \draw[thick, draw=black, color=black, rounded corners=1.5pt] (m1-1-2.south west) -- ++(0,-0.2) coordinate (b1) -- (m1-1-2.south east |- b1) -- (m1-1-2.south east);
    \coordinate (b1_mid) at ($(b1)!0.5!(m1-1-2.south east |- b1)$);

    \matrix (m2) [pmatrix] at (0, -3.0) {
        \min \big\{ \& E_q(\Omega) \& : \Omega \subseteq B_R, \& \vert\Omega\vert = \vert B_1\vert \& \big\} \\
    };
    \draw[arr] (b1_mid) -- (b1_mid |- m2.north) 
    node[pos=0.6, right=4pt, font=\small, color=black!80] 
    {Surgery: Theorem \ref{thm: equivalence unbounded and bounded problem}};

    \draw[thick, draw=black, color=black, rounded corners=1.5pt] (m2-1-2.south west) -- ++(0,-0.2) coordinate (b2a) -- (m2-1-2.south east |- b2a) -- (m2-1-2.south east);
    \coordinate (b2a_mid) at ($(b2a)!0.5!(m2-1-2.south east |- b2a)$);

    \draw[thick, draw=black, color=black, rounded corners=1.5pt] (m2-1-4.south west) -- ++(0,-0.2) coordinate (b2b) -- (m2-1-4.south east |- b2b) -- (m2-1-4.south east);
    \coordinate (b2b_mid) at ($(b2b)!0.5!(m2-1-4.south east |- b2b)$);

    \node[pbox] (rem_l2) at (-3.5, -5.5) {Remove $L^2$-constraint, Proposition \ref{prop:equivalence E_q and E_q,M}};
    \node[pbox] (rem_vol) at (3.5, -5.5) {Remove volume constraint, Theorem \ref{thm:noconstraint}};
    \draw[arr] (b2a_mid) -- (rem_l2.north);
    \draw[arr] (b2b_mid) -- (rem_vol.north);
    
    \coordinate (converge) at (0, -7.0); 
    \draw[line] (rem_l2.south) -- (converge);
    \draw[line] (rem_vol.south) -- (converge);

    \node[pbox] (m3) at (0, -8.0) {$\min \big\{ \mathcal{E}_{q,M,\eta}(u) : u \in H^1(B_R) \big\}$};
    \draw[arr] (converge) -- (m3.north);

    \node[pbox, txt] (txtbox1) at (0, -10.3) {Existence minimum and fine properties, Theorem \ref{thm:existenceEq,M,eta} \\ Non negativity of the optimal function, Proposition \ref{prop: positivity of the minimizers}};
    \draw[arr] (m3.south) -- (txtbox1.north);

    \node[pbox, txt] (txtbox2) at (0, -12.7) {
    Optimality condition: Theorem \ref{thm:agalcathm2}};
    \draw[arr] (txtbox1.south) -- (txtbox2.north);

    \node[pbox, txt] (txtbox3) at (0, -15.1) {$C^{2,\alpha}$ regularity of the optimal set, Theorem \ref{thm:bdvthm4.18}};
    \draw[arr] (txtbox2.south) -- (txtbox3.north);

    \coordinate (L) at (-8.2, 0); 
    \draw[sidebrace] ($(m2.south -| L) + (0, 0.2)$) -- ($(m1.north -| L) - (0, 0.2)$) node[sidetxt] {Equiboundedness};
    \draw[sidebrace] ($(m3.north -| L) + (0, 0.2)$) -- ($(m2.south -| L) - (0, 0.2)$) node[sidetxt] {Remove constraints};
    \draw[sidebrace] ($(txtbox1.south -| L) + (0, 0.2)$) -- ($(m3.north -| L) - (0, 0.2)$) node[sidetxt] {Existence and properties\\of the minimum};
    \draw[sidebrace] ($(txtbox2.south -| L) + (0, 0.2)$) -- ($(txtbox1.south -| L) - (0, 0.2)$) node[sidetxt] {Free boundary};
    \draw[sidebrace] ($(txtbox3.south -| L) + (0, 0.2)$) -- ($(txtbox2.south -| L) - (0, 0.2)$) node[sidetxt] {Improvement of\\flatness};

    \end{tikzpicture} }
    \caption{Scheme of the proof}
    \label{fig:schema_matematico}
\end{figure}

We conclude this introduction with some  observations.
\begin{remark}
Another way of looking at our problem, which was used, in a different setting, in~\cite{BenguriaPereira}, is to see the energy $E_q(\Omega)$ as an eigenvalue of a competitive mixture of $-\Delta$ and $(-\Delta)^{-1}$. The study of mixtures of operators, one local and the other nonlocal, has been the subject of several works in the last few years, mostly in homogeneous settings. See for instance \cite{BDVV26} and references therein. We just remark that our proof does not apply to nonintegrable kernels in the nonlocal term, as the $s$-Laplacian would be.
\end{remark}

\begin{remark}
This paper is settled in the physical three dimensional case. From the purely mathematical point of view, one could of course try to generalize the
problem to other dimensions, or one could consider more general Riesz kernels, a general power $p$
for the gradient term and general powers $q$ and $r$
in the non-local term
and the constraint, respectively. Such generalizations often appear in the literature and
lead to a great variety of bifurcations as the dimension and other parameters are varied. While we
believe that our results may be generalized to higher dimensions and more general Riesz kernels (with suitably chosen exponents), such  generalization would, highly complicate all
the notations of our paper obscuring its main points. Therefore, instead of carrying out
such an extension, we prefer to limit ourselves to the only physically relevant case of $N=3$ and the
Newtonian potential.
\end{remark}

{\bf Structure of the paper.} 
After this Introduction, Section~\ref{sec:prelim} is devoted to setting notations and preliminary results needed in the rest of the paper.
Section~\ref{sec:Eq} deals with the study of the minimization of $E_q(u,\Omega)$ with respect to $u$ only and to the reformulation without the $L^2$ constraint.
Then in Section~\ref{sec:surgery} we prove the surgery result mentioned  in point $2.$ above.
Section~\ref{sec:Auxiliary} is devoted to introducing the unconstrained functional (point $3.$), to prove that optimal functions are positive for $q$ small (point $4.$) and to obtain first mild regularity properties (point $5.$).
Finally, in Section~\ref{sec:improvementofflatness} an optimality condition at the free boundary is proved and this allows to conclude by the classical improvement of flatness for one-phase problems (points $6.-8.$).

\section{Preliminaries}\label{sec:prelim}
Throughout the paper, we adopt the following notations: $\Omega \subset \R^3$ is   a  quasi-open  set\footnote{See the end of this section for the precise notion of quasi-open sets} of finite measure and
$u\in H^1_0(\Omega)$ denoting with $*$ the usual
convolution, we define
\begin{equation}\label{eq:defvu}
	v_u(x):=\left(u * \frac{1}{|\cdot|}\right) (x) =\int_\Omega\frac{u(y)}{|x-y|}\,dy,
\end{equation}
with $v_u \in W^{2,6}_\mathrm{loc}(\R^3)$ since $u\in H^1_0(\R^3)\hookrightarrow W^{1,6}(\R^3)$ and by \cite[Theorem 9.9]{GilbargTrudinger}. We will use several times the following Hardy-Sobolev inequality  
\begin{equation}\label{eq:Hardy-Sobolev}
\int_{\R^3}\frac{|\phi(x)|}{|x-y|}dx\leq C\int_{\R^3}|\nabla \phi(x)|dx \quad \forall\phi \in C_c^\infty(\R^3),
\end{equation}
for a universal constant $C>0$, see for instance \cite[Corollary 2 of Section 2.1.7]{Mazya}.  
The inequality holds also for $u\in \H(\Om)$ by approximation and Fatou lemma. We introduce, for $\phi,\psi:\Omega \to \R$, the Coulomb energy 
\[ D(\phi,\psi):=\int_\Omega\int_\Omega \frac{\phi(x)\psi(y)}{|x-y|}dxdy.\]
Note that it is a well defined bilinear form as soon as $ D(|\phi|,|\psi|)<+\infty$. 
Let us state some basic properties of the Coulomb interaction. 
\begin{lemma}\label{le:positivityD(u,u)}
    It holds that $D(u,u)\geq0$ for all $u\in \H(\Om)$. Moreover it holds that $D(u,u)>0$ if $u\in \H(\Om)\setminus\left\{0\right\}$ and the map $u\to D(u,u)$ is strictly convex in $\H$.
\end{lemma}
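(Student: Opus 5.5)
The plan is to use the Fourier representation of the Coulomb kernel. In $\R^3$ the function $1/|x|$ is, up to a positive constant, the fundamental solution of $-\Delta$, so $\widehat{1/|\cdot|}(\xi)=c\,|\xi|^{-2}$ with $c>0$. Extending $u\in\H(\Om)$ by zero gives $u\in H^1(\R^3)\subset L^2(\R^3)$. By the Hardy--Sobolev inequality \eqref{eq:Hardy-Sobolev}, extended to $H^1_0$ as remarked there, and by Sobolev embedding, $D(|u|,|u|)<\infty$. Hence $D(u,u)$ is well defined and equals $\int_{\R^3}u\,v_u\,dx$.

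First I will justify the identity
\[
D(u,u)=c\int_{\R^3}\frac{|\hat u(\xi)|^2}{|\xi|^2}\,d\xi .
\]
I would prove it for $u\in C_c^\infty(\R^3)$ by Parseval, using that $\hat u$ is Schwartz so that the right-hand side converges near $\xi=0$ in dimension three. To avoid distributional subtleties with $\widehat{1/|x|}$, I can regularize the kernel by $e^{-\eps|x|}/|x|$, whose transform is $4\pi/(|\xi|^2+\eps^2)$, and then let $\eps\to0$ by monotone convergence on both sides. The identity then extends to $u\in\H(\Om)$ by density in $H^1$.

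The density step is the main technical point. I need continuity of $u\mapsto D(u,u)$ in the $H^1$ norm. By Hardy--Littlewood--Sobolev, $|D(f,g)|\le C\|f\|_{L^{6/5}}\|g\|_{L^{6/5}}$, so it suffices to check that $H^1$-convergence with uniformly bounded supports of finite measure controls the $L^{6/5}$ norm. This holds by H\"older, since $|\Om|<\infty$ and $u_n\to u$ in $L^2$, once the approximants are chosen supported in a fixed neighborhood of finite measure. Alternatively, Fatou's lemma on the Fourier side gives at least the inequality $\ge$, which already suffices for nonnegativity.

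Once the identity holds, $D(u,u)\ge0$ is immediate. If $D(u,u)=0$, then $\hat u=0$ a.e., so $u=0$ in $L^2$ and hence in $\H$. For strict convexity I use that $D$ is a symmetric bilinear form with $D(w,w)>0$ for $w\ne0$. For $u\neq w$ and $t\in(0,1)$,
\[
tD(u,u)+(1-t)D(w,w)-D(tu+(1-t)w,\,tu+(1-t)w)=t(1-t)\,D(u-w,u-w)>0,
\]
by expanding the bilinear form, which gives strict convexity.
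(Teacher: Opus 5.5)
Your proof is correct, but it takes a different route from the paper. The paper gives no argument at all and simply invokes \cite[Theorem 9.8]{LiebLoss}, which gives positive definiteness (and hence strict convexity) of the Coulomb energy for every $f$ with $D(|f|,|f|)<\infty$. You instead give a self-contained proof through the Fourier representation $D(u,u)=c\int|\hat u|^2|\xi|^{-2}\,d\xi$. This is essentially the standard mechanism behind the cited theorem. The citation buys brevity and generality, while your argument makes positivity and the equality case transparent.

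There are three small points to fix or streamline.

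First, when you remove the Yukawa regularization, monotone convergence only applies on the Fourier side. On the physical side $u(x)u(y)$ changes sign, so you need dominated convergence instead. The dominating function is $|u(x)||u(y)|/|x-y|$, which is integrable precisely because $D(|u|,|u|)<\infty$, as you showed.

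Second, once you use dominated convergence, the density step you flagged as the main difficulty is unnecessary. Since $K_\eps=e^{-\eps|x|}/|x|\in L^1(\R^3)$, for any $u\in L^2(\R^3)$ you have $K_\eps*u\in L^2$ and $\widehat{K_\eps*u}=\hat K_\eps\hat u$. Plancherel then gives the regularized identity directly for every $u\in\H(\Om)$, and letting $\eps\to0$ as above yields the exact identity with no approximation by $C_c^\infty$ functions. As written, your density argument would also need continuity of the Fourier side under the approximation, which you did not address. It does follow from the dual Sobolev inequality, or from a Cauchy-sequence argument using HLS.

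Third, your Fatou alternative gives more than you claim. It yields $D(u,u)\ge c\int|\hat u|^2|\xi|^{-2}\,d\xi$, which already implies strict positivity, and hence, via your bilinear expansion, strict convexity, not merely nonnegativity.
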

For the proof of the above Lemma, we refer to \cite[Theorem 9.8]{LiebLoss}. 

\begin{proposition}\label{prop:continuity of D(u,u)}
Let $\Omega$ be a quasi-open set of finite measure. Then the functional $D:L^2(\Omega)\to\R$ defined by
	$	D(u)=  D(u,u)$	
	is continuous in $L^2(\Omega)$.
	\begin{proof}
		Let $u\in L^2(\Om)$ and $(u_n)_n$ be a sequence such that $u_n\to u$ in $L^2(\Omega)$. Then, up to subsequence, there exists $g\in L^2(\Omega)$ such that $|u_n(x)|\leq g(x)$ and $u_n(x)\to u(x)$ almost everywhere in $x\in \Omega$. Since \[ \frac{|u_n(x)u_n(y)|}{|x-y|}\leq \frac{|g(x)g(y)|}{|x-y|} \quad \text{ for a.e. } 
        (x,y)\in \Omega\times \Omega,\]
		then if  $\frac{|g(x)g(y)|}{|x-y|}\in L^1(\Omega\times\Omega)$, the claim holds by Dominated Convergence Theorem. Let $v_g(x):=\int_{\Omega}\frac{g(y)}{|x-y|}dy$ for $x\in\Omega$, then $v_g\in W^{2,6}(\Omega)$  by \cite[Theorem 9.9]{GilbargTrudinger} and so 
		 \begin{equation*}
		 	\int_\Omega\int_\Omega \frac{|g(x)g(y)|}{|x-y|}dxdy=\int_\Omega v_g(x)g(x)dx\leq ||v_g||_{L^2(\Omega)}||g||_{L^2(\Omega)}<\infty. \qedhere
		 \end{equation*}

	\end{proof}
\end{proposition}

\subsection{Quasi-open sets}

In this section, we briefly recall some standard definitions and properties concerning quasi-open sets, which define  the natural setting on which our shape minimization problem is well posed. We refer to  \cite{HenrotPierre2018} for further details on the subject.

\begin{definition}
A measurable set $\Omega \subset \mathbb{R}^3$ is called \textit{quasi-open} if, for every $\varepsilon > 0$, there exists a compact set $K_{\varepsilon}$ whose Newtonian capacity satisfies ${\rm cap}(K_{\varepsilon}) < \varepsilon$ such that  $\Omega \setminus K_{\varepsilon}$ is an open set. Here ${\rm cap}(\cdot)$ stands for the standard Sobolev capacity.
\end{definition}

Along the same lines, a function $u: \Omega \to \mathbb{R}$ is defined as \textit{quasi-continuous} if for every $\varepsilon > 0$ one can find a compact set $K_{\varepsilon}$ with ${\rm cap}(K_{\varepsilon}) < \varepsilon$ such that the restriction of $u$ to $\Omega \setminus K_{\varepsilon}$ is continuous. A property is said to hold \textit{quasi-everywhere} (q.e.) if the set where it fails to hold has null capacity.

It is a classical result that any function $u \in H^1(\Omega)$ admits a quasi-continuous representative $\tilde{u}:\Omega\to\R$. Furthermore, any two quasi-continuous representatives of $u$ coincide quasi-everywhere. Consequently, it is customary to identify every Sobolev function  with its quasi-continuous representative. In this framework, a quasi-open set can simply be understood as a superlevel set of a function $u \in H^1(\R^3)$. Notice that if a function $u\in H^1(\R^3)$ is sign-changing, clearly $\{u\not=0\}$ is a quasi-open set.

Finally, for a quasi-open set $\Omega \subset \mathbb{R}^3$, the associated Sobolev space $H_0^1(\Omega)$ is naturally defined as:
\begin{equation}
    H_0^1(\Omega) = \{u \in H^1(\mathbb{R}^3) : u = 0 \text{ quasi-everywhere in } \mathbb{R}^3 \setminus \Omega\}.
\end{equation}
 We stress that this definition coincides with the classical one whenever $\Omega$ is a standard open set.

\subsection{Fraenkel asymmetry and quantitative Faber-Krahn inequality}

We conclude  this preliminary section by recalling the sharp quantitative version of the Faber-Krahn inequality. In order to do that we first remind the notion of \textit{Fraenkel asymmetry}: for a measurable set $E \subset \mathbb{R}^3$ with finite measure we define
\begin{equation}\label{eq: fraenkel}
	\mathcal{A}(E) = \inf_{x\in\mathbb{R}^3} \frac{|E \Delta(B + x)|}{|E|},
\end{equation}
where $B$ denotes the ball of measure $|E|$ centered at the origin. We also recall that we denote by $\lambda_0(\Omega)$ the first eigenvalue of the Dirichlet Laplacian of a quasi-open set $\Omega \subset \mathbb{R}^3$.

\begin{theorem}[\cite{BrascoDePhilippisVelichkov}] \label{thm:quantitativefk}
	There exists a universal positive constant $\widehat{\sigma} > 0$ such that for all quasi-open sets $\Omega \subset \mathbb{R}^3$ with finite measure we have
	\[
		|\Omega|^{2/3}\lambda_0(\Omega) - |B_1|^{2/3}\lambda_0(B_1) \geq \widehat{\sigma}\mathcal{A}(\Omega)^2.
	\]

\end{theorem}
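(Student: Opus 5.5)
The statement just before this point is Theorem~\ref{thm:quantitativefk}, the sharp quantitative Faber--Krahn inequality of \cite{BrascoDePhilippisVelichkov}, which the paper quotes and does not prove. We would likewise import it, but we outline how its proof goes. It follows the selection principle of \cite{CicLeo2012}. The first step is a reduction to sets with small asymmetry. By scaling invariance we may assume $|\Omega|=|B_1|$. If $\mathcal A(\Omega)$ exceeds a fixed threshold $\delta_0$, we argue by contradiction via a concentration-compactness argument on the normalized eigenfunctions. This gives a nonquantitative stability statement: $\lambda_0(\Omega)$ close to $\lambda_0(B_1)$ forces $\mathcal A(\Omega)$ small. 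Hence the deficit is bounded below by a universal constant, and since $\mathcal A\le 2$ the inequality holds with a small $\widehat\sigma$.

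The second step handles nearly spherical sets, namely $\partial\Omega=\{(1+\varphi(x))x : x\in \partial B_1\}$ with $\|\varphi\|_{C^{2,\alpha}}$ small, volume fixed and barycenter fixed. Here we would compute the second shape derivative of $\lambda_0$ at $B_1$. Through a Fuglede-type expansion in spherical harmonics it gives
\[
\lambda_0(\Omega)-\lambda_0(B_1)\ge c\,\|\varphi\|_{H^{1/2}(\partial B_1)}^2,
\]
after removing the degree-$0$ mode (volume constraint) and the degree-$1$ modes (translations). One then controls the remainder of the Taylor expansion in the $C^{2,\alpha}$ topology. Finally, $\mathcal A(\Omega)\lesssim\|\varphi\|_{L^1}\lesssim \|\varphi\|_{H^{1/2}}$.

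The third step, the selection principle, is the main obstacle. Suppose there is a sequence $\Omega_n$ with $\mathcal A(\Omega_n)=\varepsilon_n\to0$ and deficit $\le \sigma_n\varepsilon_n^2$ with $\sigma_n\to0$. We replace each $\Omega_n$ by a minimizer $U_n$ of a penalized problem of the form
\[
\lambda_0(U)+\Lambda|U|+\sqrt{\varepsilon_n^2+\sigma_n(\mathcal A(U)-\varepsilon_n)^2}.
\]
The sets $U_n$ are quasi-minimizers of a one-phase Alt--Caffarelli problem with a Lipschitz-type perturbation coming from the asymmetry. The Alt--Caffarelli improvement-of-flatness theory then gives $\partial U_n\to\partial B_1$ in $C^{2,\alpha}$, and Step~2 applied to $U_n$ yields a contradiction. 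The delicate points are the perturbation terms in the free boundary condition produced by the nonsmooth asymmetry term. The original proof handles them by replacing $\mathcal A$ with a smoother variant based on the eigenfunction, and by using higher-order regularity theory to upgrade from $C^{1,\alpha}$ to $C^{2,\alpha}$.
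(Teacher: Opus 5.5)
Your proposal matches the paper, which likewise gives no proof and simply quotes the result from \cite{BrascoDePhilippisVelichkov}. If you keep your sketch of that proof, correct two points: first, there the selection principle is run not on $\lambda_0$ but on the torsional energy, to which the eigenvalue inequality is reduced via the Kohler--Jobin inequality; second, the regularized asymmetry is $\alpha(\Omega)=\int_{\Omega\Delta B_1(x_\Omega)}\bigl|1-|x-x_\Omega|\bigr|\,dx$ with $x_\Omega$ the barycentre, not a quantity built from the eigenfunction (this gives $\alpha\gtrsim\mathcal A^2$, and its first variation has the continuous density $|x-x_\Omega|-1$, which is what permits the $C^{2,\alpha}$ upgrade).
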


\section{Minimization of $E_q(u,\Omega)$ in $u$}\label{sec:Eq}
We are now in a position to address the study of the energy functional for  a fixed domain $\Omega$. Relying on the nonnegativity and continuity properties established in the previous section, the first natural result guarantees the existence of a minimizing function.
\begin{lemma}\label{le:exis u min}
	Let $\Omega\subset \R^3$ be a quasi-open set of finite measure and
	let $q\geq0$.  Then 
	\begin{equation*}
		E_q(\Omega)= \inf \left\{E_q(v,\Omega) : v\in \H(\Omega),\:	\int_\Omega v^2\, dx=1\right\}. 
	\end{equation*}
	admits a minimizer. 
\end{lemma}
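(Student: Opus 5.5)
We argue by the direct method of the calculus of variations in $\H(\Om)$. Since $\Om$ has finite measure, the admissible class is nonempty: the space $\H(\Om)$ is nontrivial, and we normalize any nonzero element in $L^2$. By Lemma~\ref{le:positivityD(u,u)} we have $D(v,v)\ge 0$, so for every admissible $v$
\[
E_q(v,\Om)\ge \int_\Om|\nabla v|^2\,dx\ge 0 .
\]
Hence the infimum is finite and nonnegative. We take a minimizing sequence $(v_n)_n\subset\H(\Om)$ with $\|v_n\|_{L^2}=1$ and $E_q(v_n,\Om)\to E_q(\Om)$. Again by nonnegativity of $D$, the Dirichlet energies $\int|\nabla v_n|^2$ are bounded by $E_q(\Om)+1$ for $n$ large, so $(v_n)_n$ is bounded in $H^1(\R^3)$ once extended by zero.

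Up to a subsequence, $v_n\rightharpoonup v$ weakly in $H^1(\R^3)$. The space $\H(\Om)$ is a closed subspace of $H^1(\R^3)$: it is convex and strongly closed, since $H^1$-convergence implies quasi-everywhere convergence of a subsequence of quasi-continuous representatives. It is therefore weakly closed, and so $v\in\H(\Om)$. The key step is to upgrade to strong $L^2$ convergence in order to preserve the constraint $\int v^2=1$. This is the main point, because $\Om$ is only quasi-open, possibly unbounded, and no Rellich theorem for $\Om$ is directly available.

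To obtain strong convergence, we use that $|\Om|<\infty$. For $R>0$, Rellich's theorem on $B_R$ gives $v_n\to v$ in $L^2(B_R)$. For the tails, Hölder and Sobolev give
\[
\int_{\Om\setminus B_R}v_n^2\le |\Om\setminus B_R|^{2/3}\|v_n\|_{L^6}^2\le C\,|\Om\setminus B_R|^{2/3}\sup_n\|\nabla v_n\|_{L^2}^2,
\]
which tends to $0$ as $R\to\infty$ uniformly in $n$. Hence $v_n\to v$ in $L^2(\Om)$, and in particular $\|v\|_{L^2}=1$, so $v$ is admissible.

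Finally, we pass to the limit in the energy. The Dirichlet term is weakly lower semicontinuous. The Coulomb term $D(v_n,v_n)$ converges to $D(v,v)$ by the $L^2$-continuity of Proposition~\ref{prop:continuity of D(u,u)}, and the factor $q\ge0$ is harmless. Therefore
\[
E_q(v,\Om)\le\liminf_{n\to\infty} E_q(v_n,\Om)=E_q(\Om),
\]
so $v$ is a minimizer.
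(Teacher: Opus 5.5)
Your proof is correct and is essentially the paper's direct-method argument. The one difference is in the compactness step: the paper cites Bucur--Buttazzo for the compact embedding $\H(\Om)\hookrightarrow L^2$ on finite-measure sets, whereas you prove it by hand (Rellich on $B_R$ plus the uniform tail bound $\int_{\Om\setminus B_R}v_n^2\le C|\Om\setminus B_R|^{2/3}\|\nabla v_n\|_{L^2}^2$), which makes the argument self-contained.

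A minor slip: the admissible class is nonempty because $\H(\Om)\neq\{0\}$, which the statement tacitly assumes (it fails e.g.\ for $\Om$ of zero capacity); this does not follow from $|\Om|<\infty$.
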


\begin{proof}
	Let $(u_n)_n$  be a minimizing sequence for the energy. By Lemma \ref{le:positivityD(u,u)} all the terms in the definition of $E_q(u,\Omega)$ are nonnegative, thus we infer that $(u_n)_n$ is bounded in $H^1_0(\Omega)$.
	Then up to passing to a subsequence, $(u_n)_n$ converges weakly in
	$H^1_0(\Omega)$, strongly in $L^2(\Omega)$ and pointwise a.e. to some function
	$u\in H^1_0(\Omega)$. Observe that in the case of finite measure unbounded set, the Sobolev's embedding holds by \cite{BucurButtazzo}. In particular the $L^2-$convergence implies
	that $\|u\|_{L^2(\Omega)}=1$. By lower semicontinuity with respect to
	the weak convergence, we have that
	\[
	\int_\Omega |\nabla u|^2\,dx\le \liminf_{n\to+\infty} \int_\Omega |\nabla u_n|^2\,dx
	\]
	and, by Proposition \ref{prop:continuity of D(u,u)}
	\[
	D(u,u)=\iint_{\Omega\times\Omega}\frac{u(x)u(y)}{|x-y|}\,dxdy= \lim_{n\to+\infty} D(u_n,u_n).
	\]
	
	Hence, the functional is lower semicontinuous
	\[
	E_q(u,\Omega)\le  \liminf_{n\to+\infty} E_q(u_n,\Omega),
	\]
	and $u$ is a minimizer.
\end{proof}
Having ensured the existence of an optimal function, the next step is to derive its optimality conditions. So now we study the associated Euler-Lagrange equation, analyzing some of its consequences.	
\begin{theorem}\label{thm: pde and L infty estimate}
    Let $q\in[0,1)$ and $\Omega$ be a quasi-open set of finite measure such that $E_q(\Omega)\leq E_1(B_1)$ . Then every optimal function $u_q\in \H(\Omega)$ of $E_q(\Omega)$ satisfies in a weak sense
    \begin{equation}\label{eq: PDE}
 -\Delta u_q-\lambda_{q,\Omega}u_q=-\frac{q}{2}\Big(u_q*\frac{1}{|\cdot|}\Big) \quad \text{in }\H(\Om), 
\end{equation}
 where
	\[ \lambda_{q,\Omega}=\int_\Omega|\nabla u_q|^2 {\, dx} +\frac{q}{2}D(u_q,u_q).\]
Furthermore $u_q\in C^\infty(\Omega)$ and \[  ||u_q||_{L^\infty(\Omega)}\leq C,\qquad \|v_{u_q}\|_{L^\infty(\R^3)}\leq C\]
for a constant $C=C(|\Omega|)$.
\end{theorem}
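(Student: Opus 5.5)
We first derive the Euler--Lagrange equation, then get $L^\infty$ bounds on $v_{u_q}$ and on $u_q$, and finally upgrade to $C^\infty$ by bootstrapping.

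\textbf{Euler--Lagrange equation.} For $\phi\in\H(\Om)$, consider the normalized competitor $w_t=(u_q+t\phi)/\|u_q+t\phi\|_{L^2}$. The map $t\mapsto E_q(w_t,\Om)$ is differentiable at $t=0$: the Dirichlet part is a smooth quadratic form, and $D$ is a bounded bilinear form on $L^2$ by the estimate in the proof of Proposition~\ref{prop:continuity of D(u,u)}. Minimality gives vanishing derivative. The derivative of the normalization produces the term $-2\lambda_{q,\Om}\int u_q\phi$, with $\lambda_{q,\Om}=E_q(u_q,\Om)$, while the Coulomb part gives $q\int v_{u_q}\phi$ by symmetry of the kernel. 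Dividing by $2$ yields
\[
\int\nabla u_q\cdot\nabla\phi-\lambda_{q,\Om}\int u_q\phi=-\frac q2\int v_{u_q}\phi ,
\]
which is \eqref{eq: PDE}. Two facts follow: $\lambda_{q,\Om}=E_q(\Om)\le E_1(B_1)$, a universal bound, and $\|\nabla u_q\|_{L^2}^2\le E_1(B_1)$.

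\textbf{Bound on the potential.} I would bound $v_{u_q}$ first. Split the integral at $|x-y|=1$. The inner part is at most $\|u_q\|_{L^6}\,\||\cdot|^{-1}\|_{L^{6/5}(B_1)}\le C\|\nabla u_q\|_{L^2}$ by Sobolev. The outer part is at most $\|u_q\|_{L^1}\le|\Om|^{1/2}$. Hence $\|v_{u_q}\|_{L^\infty(\R^3)}\le C(|\Om|)$.

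\textbf{Bound on $u_q$.} Now $u_q$ solves $-\Delta u_q=\lambda u_q+f$ in $\Om$, vanishing quasi-everywhere outside $\Om$, where $f=-\frac q2 v_{u_q}$ is bounded. The absolute value satisfies the subsolution inequality $-\Delta|u_q|\le\lambda|u_q|+|f|$ in the weak sense on $\R^3$; one can test with $(u_q-k)_+$ and $(u_q+k)_-$ separately, which are admissible in $\H(\Om)$. Then the standard Moser or De Giorgi iteration on quasi-open sets of finite measure applies. Alternatively, compare with the torsion function: $|u_q|\le C(\lambda,|\Om|)\bigl(\|u_q\|_{L^2}+\|f\|_\infty\bigr)$. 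This is the argument of Davies for eigenfunctions, as used in Bucur--Buttazzo or in \cite{MazzoleniPratelli}, extended to a bounded right-hand side. I expect this step to be the main technical point. The difficulty is doing the iteration globally, with constants depending only on $|\Om|$ and with no regularity of $\Om$: one must test only with truncations that lie in $\H(\Om)$. My first attempt would be De Giorgi's scheme $A_k=\{|u_q|>k\}$, using $|A_k|\le|\Om|$ and the Sobolev inequality on $\R^3$.

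\textbf{Interior smoothness.} Smoothness holds only where $\Om$ is open, so this step concerns the open part of $\Om$. Since $\Delta v_u=-4\pi u$, we have $v_{u_q}\in W^{2,6}_{loc}$, so the right-hand side is in $W^{2,6}_{loc}$. Elliptic regularity gives $u_q\in W^{4,6}_{loc}$, and then $v_{u_q}$ gains two more derivatives. Iterating, $u_q\in C^\infty(\Om)$.
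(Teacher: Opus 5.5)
Your proposal is correct and follows essentially the paper's route: first variation for the Euler--Lagrange equation, a uniform bound on $v_{u_q}$, a global $L^\infty$ bound on $u_q$, and a bootstrap for interior smoothness. For the potential you split the kernel at $|x-y|=1$ and use Sobolev, whereas the paper uses the Hardy--Sobolev inequality to get $|v_{u_q}|\le C\|\nabla u_q\|_{L^1}\le C|\Om|^{1/2}\|\nabla u_q\|_{L^2}$; for the $L^\infty$ step you flag as the main technical point, the paper simply cites the Moser-iteration estimates of Gilbarg--Trudinger (Theorems 8.8 and 8.15), which is exactly the truncation scheme you describe (test functions in $\H(\Om)$ plus the Sobolev inequality on $\R^3$, constants depending on $|\Om|$ and on $\lambda_{q,\Om}\le E_1(B_1)$).
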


\begin{proof}
By Lemma \ref{le:exis u min}, let $u_q$ be an optimal function arising from the minimisation of $E_q(\Omega)$. By the direct computation of its first variation we have that it satisfies in weak sense
\begin{equation*}
 -\Delta u_q-\lambda_{q,\Omega}u_q=-\frac{q}{2}\Big(u_q*\frac{1}{|\cdot|}\Big).
\end{equation*}
Let us now call $f(x):=-\frac{q}{2}\left(u_q*\frac{1}{|\cdot|}\right)(x)$, then by \eqref{eq:Hardy-Sobolev}
\[ |f(x)|\leq C||\nabla u_q||_{L^1(\Omega)}\leq C|\Omega|^{1/2}||\nabla u_q||_{L^2(\Omega)}\leq C|\Omega|^{1/2}\max\{\lambda_{q,\Omega};1\}, \]
so $f(x)\in L^\infty(\Om)$. 
Moreover by the Faber-Krahn inequality
\[ ||u_q||_{L^2(\Omega)}\leq \frac{1}{\lambda_0(\Omega)}||\nabla u_q||_{L^2(\Om)}\leq \frac{|\Omega|^{2/3}}{\lambda_0(B)}\lambda_{q,\Omega},  \]
where $B$ is a ball of measure one.
By the definition of  $\lambda_{q,\Omega} $, it holds
\[ \lambda_{q,\Omega}= E_q(\Omega)\leq E_1(\Omega)\leq E_1(B_1), \]
so that, by classical elliptic regularity theory (see \cite[Theorem 8.8 and  Theorem 8.15]{GilbargTrudinger}), $u_q\in W^{2,2}_{\text{loc}}(\Om)$ and
\[ ||u_q||_{L^\infty(\Omega)}\leq C(|\Om|).\]
Eventually, recalling also that $-\Delta v_{u_q}=u_q$ in $\Omega$, we conclude that $\|v_{u_q}\|_{L^\infty(\R^3)}\leq C$ and by a bootstrap argument that $u_q\in C^\infty(\Omega)$. 
\end{proof}

	\begin{remark}\label{rmk:scaleinvariant}
          It is not difficult to check that the
          scale invariant functional
		\[
		\widetilde E_q(v,\Omega):=\frac{\int_{\Omega}|\nabla v|^2\, dx}{\|v\|_{L^2}^2}
		+\frac{q}{2}\frac{\int_\Omega\int_\Omega\frac{v(x)v(y)}{|x-y|}\,dx\,dy}{\|v\|_{L^2}^2},\qquad v\in H^1_0(\Omega),
		\]
		leads to an equivalent, yet unconstrained, minimization problem
		\[
		\min\Big\{E_q(v,\Omega) : v\in H^1_0(\Omega),\; \int_\Omega v^2\,
		dx=1\Big\}=\min\Big\{\widetilde E_q(v,\Omega) : v\in H^1_0(\Omega)
		{, v \not\equiv 0}\Big\}.
		\]
		In particular we can always choose an optimal function $u\in H^1_0(\Omega)$ for the unconstrained problem which satisfies, a posteriori, the constraint $\|u\|_{L^2}=1$.
	\end{remark}

Now we introduce an equivalent formulation of problem \eqref{eq: EqOmega} without any $L^2$ constraint, in a different spirit with respect to Remark \ref{rmk:scaleinvariant}, and which is more suited for the regularity arguments which will follow in next sections. We recall that  $\Omega\subset \R^3$ is quasi-open set with finite measure, then we define 
	\begin{equation}\label{eq:EqM}
E_{q,M}(\Om):=	\min\Big\{E_{q,M}(v,\Omega) : v\in H^1_0(\Omega)\Big\},
\end{equation}
where 	\begin{equation}\label{eq:EqM(u, Omega)}
	E_{q,M}(v,\Omega):=E_q(v,\Omega)+M\Big|\int_\Om v^2\, dx-1\Big|.
\end{equation}
Reasoning exactly as in Lemma \ref{le:exis u min} we can prove that there exists a minimizer for $E_{q,M}(\Omega)$. We show now that for some value of $M$, problems \eqref{eq:EqM} and \eqref{eq: EqOmega} are equivalent.
\begin{proposition}\label{prop:equivalence E_q and E_q,M}
	Let $\Omega\subseteq \R^3$ be a quasi-open set with finite measure.  Assume that  $M \geq 5 E_{q,M}(\Omega)+5E_1(B_1)$. Then the minimizers of the problem~\eqref{eq: EqOmega} are the same as those of \eqref{eq:EqM}.
\end{proposition}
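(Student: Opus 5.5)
The plan is to compare the two problems through the one-parameter family of dilations $s\mapsto \sqrt{s}\,u$. The key point is that $E_q(\cdot,\Omega)$ is $2$-homogeneous, $E_q(tv,\Omega)=t^2E_q(v,\Omega)$, because both the Dirichlet term and $D(v,v)$ are quadratic. I will show that every minimizer of \eqref{eq:EqM} automatically satisfies $\int_\Om u^2=1$, and then deduce that the two minimum values coincide.

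\emph{Step 1 (trivial inequality).} Take a minimizer $u_q$ of \eqref{eq: EqOmega}, which exists by Lemma~\ref{le:exis u min}. Since $\int u_q^2=1$, the penalization vanishes, so $E_{q,M}(\Om)\le E_{q,M}(u_q,\Om)=E_q(\Om)<\infty$. In particular $E_{q,M}(\Om)$ is finite, and the hypothesis gives $E_{q,M}(\Om)\le M/5$.

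\emph{Step 2 (the mass of a penalized minimizer is not small).} Let $u$ be a minimizer of \eqref{eq:EqM} and set $a:=\int_\Om u^2$. By Lemma~\ref{le:positivityD(u,u)} we have $E_q(u,\Om)\ge 0$, hence
\[
M|a-1|\le E_{q,M}(u,\Om)=E_{q,M}(\Om)\le M/5 .
\]
Therefore $a\ge 4/5$; in particular $u\neq 0$. Moreover $E_q(u,\Om)>0$, since $D(u,u)>0$ for $u\not\equiv0$ by Lemma~\ref{le:positivityD(u,u)}. We also get $E_q(u,\Om)\le E_{q,M}(\Om)\le M/5$.

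\emph{Step 3 (scaling argument).} Define $g(s):=E_{q,M}(\sqrt{s}\,u,\Om)=sE_q(u,\Om)+M|sa-1|$ for $s>0$. This is piecewise affine and is minimized at $s=1$ by assumption.
\begin{itemize}
\item If $a<1$, then for $s\in(0,1/a]$ the slope of $g$ is $E_q(u,\Om)-Ma\le M/5-4M/5<0$. Moving $s$ from $1$ up to $1/a$ strictly decreases $g$, a contradiction.
\item If $a>1$, then on $[1/a,\infty)$ the slope is $E_q(u,\Om)+Ma>0$. Decreasing $s$ from $1$ to $1/a$ strictly lowers the energy, again a contradiction.
\end{itemize}
Hence $a=1$.

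\emph{Step 4 (conclusion).} Since $a=1$, every minimizer $u$ of \eqref{eq:EqM} is admissible for \eqref{eq: EqOmega}. Thus $E_q(\Om)\le E_q(u,\Om)=E_{q,M}(\Om)\le E_q(\Om)$, so the two minimum values are equal. Consequently:
\begin{itemize}
\item every minimizer of \eqref{eq:EqM} is a minimizer of \eqref{eq: EqOmega};
\item conversely, any minimizer of \eqref{eq: EqOmega} has zero penalization, so it attains $E_{q,M}(\Om)$ and is a minimizer of \eqref{eq:EqM}.
\end{itemize}

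The only delicate point is Step 2: one must rule out penalized minimizers with very small $L^2$ mass, for which the dilation slope could have the wrong sign. This is exactly where the lower bound on $M$ relative to $E_{q,M}(\Om)$ is used; the term $5E_1(B_1)$ in the hypothesis is not needed in this argument.
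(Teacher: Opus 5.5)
Your proof is correct and is essentially the paper's argument. Both first bound the $L^2$ mass of a penalized minimizer from below and then compare it with its $L^2$-normalization; your choice $s=1/a$ is exactly the paper's $u=\hat u/(1+\sigma)$. Your sharper bound $|a-1|\le 1/5$ makes the case $a<1$ close more tightly than the paper's treatment of $\sigma<0$, which bounds $E_q(u,\Om)$ only by $4E_{q,M}(\Om)$ and so would need $M>8E_{q,M}(\Om)$ to conclude.

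One caveat on your closing remark: Steps 2 and 3 divide by $M$ and use its sign, so you need $M>0$, and this is exactly what the term $5E_1(B_1)$ guarantees. For $M=0$ the condition $M\ge 5E_{q,M}(\Om)$ holds trivially, while $v\equiv 0$ is then the unique minimizer of \eqref{eq:EqM}.
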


\begin{proof}
First of all, it is easy to check that $E_{q,M}(\Omega)\leq E_q(\Omega)$, since optimal functions for the latter are admissible in the minimization of the former and are $L^2$-normalized.
Now, let $\hat{u}$ be an optimal function for $E_{q,M}(\Omega)$. Observe that $||\hat{u}||_{L^2(\Omega)}\geq 1/2$. Indeed if by contradiction $||\hat{u}||_{L^2(\Omega)}< 1/2$, then (using also the assumption on $M$)
	\begin{align*}
		E_{q,M}(\Omega)=E_{q,M}(\Omega,\hat{u})=E_q(\hat{u},\Omega)+M\Big(1-\int_\Om \hat{u}^2dx\Big)> \frac{3}{4}M\geq E_1(B_1)+E_{q,M}(\Omega)\geq E_{q,M}(\Omega).
  	\end{align*}    
Let us now call $\sigma:=||\hat{u}||_{L^2(\Om)}-1\geq -1/2$ and $u:=\frac{\hat{u}}{1+\sigma}$. We can then compute
	\begin{align*}
		E_q(\Om)\geq E_{q,M}(\Omega)= E_{q,M}(\hat{u},\Om)&=(1+\sigma)^2\int_\Om|\nabla u|^2dx+\frac{q}{2}(1+\sigma)^2D(u,u)+M\left| (1+\sigma)^2-1 \right|\\
		&\geq E_q(u,\Om)+2\sigma \Big(\int_\Om|\nabla u|^2dx+\frac{q}{2}D(u,u)\Big)+M|2\sigma+\sigma^2|
    \end{align*}
At this point, if $\sigma>0$ we immediately find a contradiction for all $M>0$.
On the other hand, if $\sigma\in (-1/2,0)$, we notice that $|2\sigma+\sigma^2|\geq |\sigma|$ and then
    \begin{align*}
		E_q(\Om)&\geq  E_q(\Om)+2\sigma E_q(u,\Om)+M|\sigma|,
	\end{align*}
so we obtain a contradiction since $E_q(u,\Om)\leq 4E_{q,M}(\Om)$ (recall that $(1+\sigma)^2\geq 1/4$) and $M\geq 5E_{q,M}(\Om)$. Thus $\sigma=0$.
\end{proof}
\begin{remark}\label{rmk:M}
	From now on, we fix once and for all a constant
    \[
    M>10 (E_1(B_1)+|B_1|).
    \]
    Clearly, if $\Om$ is a quasi-open set of finite measure such that $E_{q,M}(\Omega)\leq E_1(B_1)+|B_1|$, then $M> 10 (E_1(B_1)+|B_1|)\geq 5E_{q,M}(\Omega)+5 E_1(B_1)$ and so by Proposition \ref{prop:equivalence E_q and E_q,M}, we obtain that problems $E_q(\Om)$ and $E_{q,M}(\Om)$ are equivalent. 
    Now let us focus on the case when $q\in (0,1)$, when the monotonicity of the functionals with respect to $q$ entails that
	\[ 	\inf\{E_{q,M}(\Omega): \Om\subseteq \R^3 \text{ quasi-open},\;|\Om|=|B_1|\}\leq\inf\{E_{q}(\Omega): \Om\subseteq \R^3 \text{ quasi-open},\;|\Om|=|B_1|\}\leq E_1(B_1),\]
	thus considering in the above minimizations only quasi-open sets $\Omega$ with $E_{q,M}(\Omega)\leq E_1(B_1)+|B_1|$ is not restrictive.
    In conclusion, for $q\in(0,1)$ and $M$ chosen as above, 
	\[  \inf\{E_q(\Omega): \Om\subseteq \R^3 \text{ quasi-open},\;|\Om|=|B_1|\}=\inf\{E_{q,M}(\Omega): \Om\subseteq \R^3 \text{ quasi-open},\;|\Om|=|B_1|\}.\]
\end{remark}

\section{A surgery result}\label{sec:surgery}
    In this section, we prove a surgery result that  allows to remove the equiboundedness assumption.  The surgery strategy that we
	employ is similar to the one proposed in~\cite{MazzoleniPratelli} (see
	also~\cite{BucurMazzoleni}) and used for problems more similar to the current one in
	in~\cite{MazzoleniRuffini, MazzoleniMuratovRuffini}.   Nevertheless some non-trivial technical changes are needed to adapt it to our case, whence we report the full proof here below. In particular some additional technicalities are due to the PDE solved by optimal functions and to the fact that they can change sign.
    \begin{lemma}\label{le:surgery}
          There exist universal constants $D$, $\overline \delta<1$
          and $q_1\in (0,1)$ such that if
          $q\leq q_1$ then for any open and connected set
          $\Om\subset \R^3$ of measure $|B_1|$ satisfying
          $E_q(\Omega)-\lambda_0(B)\leq \overline \delta$ there
          exists an open, connected set $\widehat \Om$ of measure
          $|B_1|$ with diameter bounded by $D$ and such that
		\[
		E_q(\widehat \Om)\leq E_q(\Om).
		\]
	\end{lemma}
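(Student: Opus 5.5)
We follow the Mazzoleni--Pratelli surgery scheme, adapted to the Coulomb term and to possibly sign-changing optimal functions. Fix $\Om$ as in the statement and an optimal $u=u_q$ for $E_q(\Om)$. By Theorem~\ref{thm: pde and L infty estimate}, $\|u\|_{L^\infty}\le C$ and $\|v_u\|_{L^\infty}\le C$ with $C$ universal, because $|\Om|=|B_1|$. Since $E_q(\Om)\ge\lambda_0(\Om)$ and $E_q(\Om)-\lambda_0(B)\le\overline\delta$, Theorem~\ref{thm:quantitativefk} gives $\mathcal A(\Om)^2\le C\overline\delta$. After translating, $\Om$ therefore coincides with $B_1$ up to a set of small measure $\eps(\overline\delta)$. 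We will also use that $u$ is close to the first eigenfunction of $B_1$ in $L^2$. This follows from a compactness argument as $\overline\delta,q\to0$, which controls how much $L^2$ mass of $u$ lies far from $B_1$.

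The core is a one-dimensional slicing argument. For $t\ge 1$ set $\varepsilon(t):=|\Om\setminus\{|x_1|<t\}|$ (and similarly in the other coordinate directions), together with the corresponding quantities for $\int|\nabla u|^2$ and $\int u^2$ outside the slab. The surgery at level $t$ works as follows. Cut $\Om$ along the hyperplane $\{x_1=t\}$ and keep $\Om\cap\{x_1<t\}$. Replace $u$ by $u\,\chi$ with a cutoff, or by the harmonic-type replacement used in \cite{MazzoleniPratelli}; this produces a competitor with less volume and with $L^2$ norm and Dirichlet energy changed by at most $C\big(\varepsilon(t)+\mathcal H^2\text{-trace terms on }\{x_1=t\}\big)$. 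Then dilate by the factor $\big(|B_1|/|\Om\cap\{x_1<t\}|\big)^{1/3}\ge1$ to restore the volume.

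The dilation is where the energy gain comes from. The Dirichlet part scales like $r^{-2}$ and the Coulomb part like $r^{-1}$ (for $L^2$-normalized $u$ in dimension 3), so both decrease, and the total gain is at least $c\,E_q\,\varepsilon(t)\ge c\lambda_0(B)\varepsilon(t)$. The loss comes from the cut. For the gradient term it is controlled by $\int_{\{x_1=t\}}(u^2+|u||\nabla u|)$. For the Coulomb term it equals $D(u,u)-D(u\chi,u\chi)$, and after splitting $u=u\chi+u(1-\chi)$ this is bounded by
\[
2\int |u(1-\chi)|\,|v_u|+\cdots\le C\int_{\{x_1>t-1\}}|u|,
\]
using $\|v_u\|_\infty\le C$. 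The pointwise bound on $u$ controls this linearly by $\varepsilon$-type quantities. Coupling these with the ODE inequality on $\varepsilon(t)$, as in \cite{MazzoleniPratelli}, we obtain the following dichotomy: either for some $t\le D/2$ the surgery strictly decreases the energy (a contradiction, or just the conclusion of the lemma), or $\varepsilon$ satisfies $\varepsilon'(t)\le -c\,\varepsilon(t)^{2/3}$ by the isoperimetric-type relation, which forces $\varepsilon(t)=0$ for $t\ge t_0$ with $t_0$ universal. Repeating this in every coordinate direction and on both sides gives $\widehat\Om$ contained in a cube of side $D$. Connectedness is kept by retaining only the connected component carrying most of the energy. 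Alternatively, for an open connected set we choose cuts whose remaining part is connected, and we discard the other components, since dropping a piece and rescaling can only help up to the Coulomb cross term, which is again bounded by $\|v_u\|_\infty\|u\|_{L^1(\text{piece})}$.

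The main obstacle is the Coulomb term. Because $u$ can change sign, cutting away a piece can \emph{increase} $D$, since it can destroy cancellations. So the loss from the Coulomb term must be bounded by the $L^1$ norm of the removed part and not by its $L^2$ norm squared, which makes it linear rather than quadratic in the small quantities. I would first try to absorb it using $q\le q_1$ small: the loss is $q\,C\int_{\text{cut}}|u|\le qC\,\varepsilon^{1/2}\|u\|_{L^2(\text{cut})}$. I would then show that $\|u\|_{L^2(\text{cut})}$ is controlled by $\varepsilon$ through the $L^\infty$ bound, giving $qC\varepsilon$, which is dominated by the dilation gain $c\varepsilon$ once $q_1$ is small. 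Getting the precise exponents to match in the ODE step is the delicate technical point.
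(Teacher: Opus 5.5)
Your architecture is the paper's: slice, cut, dilate back to volume $|B_1|$, a dichotomy, tail extinction, and a Coulomb variation bounded linearly in the removed volume through $\|u\|_{L^\infty}$ and $\|v_u\|_{L^\infty}$, then absorbed for $q$ small. What is missing is the estimate that makes the surgery work.

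The dilation gain is only $\frac{2\lambda_0(B_1)}{3|B_1|}\,m(t)$, with a fixed constant. Here $m(t)=|\Omega^-(t)|$ is the removed volume, which is your $\varepsilon(t)$. So the loss in the \emph{spectral} part must be bounded by slice terms plus a $q$-small multiple of $m(t)$. Your bound $C(\varepsilon(t)+\text{trace terms})$, with a generic universal $C$, does not close. The $L^\infty$-times-volume bound you use for the Coulomb term does not help here either. The lost mass costs about $\lambda_q\int_{\Omega^-(t)}u^2$ in the Rayleigh quotient, and $\lambda_q\|u\|_{L^\infty}^2m(t)\ge\lambda_q m(t)/|B_1|$ already exceeds the gain.

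This is the content of Lemma~\ref{primastima}, which you need. Reflect $\Omega^-(t)$ across $\{x_1=t\}$ and apply Faber--Krahn to this set of small measure; this gives $\int_{\Omega^-(t)}u^2\le\frac{(2m(t))^{2/3}}{\lambda_0(B_1)|B_1|^{2/3}}\int_{\Omega^-(t)}|\nabla u|^2$. Testing \eqref{eq:PDEq} on $\Omega^-(t)$ gives $\int_{\Omega^-(t)}|\nabla u|^2\le K\int_{\Omega^-(t)}u^2+C\eps(t)^{1/2}\de(t)+\frac q2\int_{\Omega^-(t)}|v_u u|$. In this bound $\eps(t)=\mathcal H^2(\Omega_t)$ is the slice area and $\de(t)$ the gradient energy on the slice. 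Combining the two, both tail quantities are at most $C_1\eps(t)^{1/2}\de(t)+qC_m m(t)$. Your claim that the gradient loss is controlled by $\int_{\{x_1=t\}}(u^2+|u||\nabla u|)$ only becomes true after such a combination through the equation.

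This is exactly where the sign change enters. For $u\ge0$ the term $-\frac q2 v_u u$ could be dropped. For sign-changing $u$ it produces the term $qC_m m(t)$, which must be beaten by the dilation gain; this is the condition $q_1KC_m<\pi/100$. Your ``main obstacle'' paragraph treats the Coulomb term only in the energy comparison, not in the Euler--Lagrange equation.

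Two further claims are wrong as stated. First, for the $L^2$-preserving dilation $u_r=r^{-3/2}u(\cdot/r)$ one has $D(u_r,u_r)=r^{2}D(u,u)$, not $r^{-1}D(u,u)$. Dilating by $r\ge1$ therefore \emph{increases} the Coulomb term, so the whole gain comes from the Dirichlet term. This increase, of order $q\,m(t)$, must also be absorbed, as in \eqref{eq:stimaV}.

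Second, the alternative to a successful surgery is not the pure ODE $\varepsilon'\le-c\varepsilon^{2/3}$; you get that only at levels where $\eps(t)\ge\de(t)$. The actual alternative is conditions (1)--(2) of Lemma~\ref{threeconditions}, i.e.\ $m(t)\le C_4(\eps(t)+\de(t))\eps(t)^{1/2}$. The $\de$-part is controlled through $\int\de(t)\,dt\le K$ in the extinction argument of \cite{MazzoleniPratelli,MazzoleniRuffini}.

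Note also that the competitor there is neither a cutoff nor a harmonic replacement. It is the linear extension of $u(t,\cdot)$ on an attached cylinder $(t-\sigma,t)\times\Omega_t$ with $\sigma=\eps(t)^{1/2}$. Its cost $C\eps(t)^{1/2}\de(t)$ rests on the two-dimensional Faber--Krahn bound $\mu(t)\le C\eps(t)\de(t)$. Finally, the $L^2$-closeness of $u$ to $u_{B_1}$ that you invoke is not needed.
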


We show this result in several steps. Let us introduce some notation.  Let $\Omega$ be a connected set of
	measure $|B_1|$ such that
	$\lambda_0(\Om)-\lambda_0(B_1)\leq E_q(\Omega)-\lambda_0(B_1) \leq \overline
	\delta$, with $\overline\delta\in(0,1)$ to be chosen, and { fix $B_1$ the ball attaining the minimum in the Fraenkel asymmetry for $\Omega$ (see~\eqref{eq: fraenkel}). We assume, up to a translation of $\Omega$, that $B_1$ is centered at the origin. Then,} by the quantitative Faber--Krahn inequality (see
	Theorem~\ref{thm:quantitativefk}), we have 
	\[ |\Om\Delta B_1|=\mathcal A(\Om)\leq |B_1|^
	{1/3}\left(\frac{\overline \delta}{\widehat \sigma}\right)^{1/2},
	\]
	where $\widehat \sigma$ is the constant provided by Theorem~\ref{thm:quantitativefk}.
	By defining 
		\begin{equation}\label{eq:defK}
			K:=\lambda_0(B_1)+1\geq \lambda_0(B_1)+\overline \delta
		\end{equation} 
		we obtain immediately 
		\[
		E_q(\Omega)\leq K,\qquad\text{and in particular,}\qquad \int_{\Omega}|\nabla u|^2\, dx\leq K,
		\] 
	where $u=u_{q,\Omega}$ from now on is the function attaining $E_q(\Omega)$.
	We then note that (since $B_1$ has unit radius)\[
	|\Om\setminus [- t,t]^3|\leq |\Om\Delta B_1|= \mathcal A(\Om),\qquad \text{for all }t\geq 1.
	\]
	Let $\mh\in (0, 1/4)$ be such that
	\begin{equation}\label{defmh}
		\frac{(4\mh)^{\frac 23}}{\lambda_0(B_1)|B_1|^{\frac23}} \, K \leq \frac 12\,.
	\end{equation}
	Moreover, we choose $\overline \delta$ small enough so that 
	\begin{equation}\label{eq:asimmpiccolamhat}
		|\Om\setminus [-1 ,1 ]^3|\leq \mathcal A(\Om)\leq |B_1|\left(\frac{\overline \delta}{\widehat \sigma}\right)\leq\frac{\mh}{2^{6}}.
	\end{equation}
	We first focus on the direction $e_1$ and detail the construction in this case.
	We shall denote $z=(x,y)\in \R\times\R^{2}$ and by $z_i$ the $i$-th component of $z\in \R^3$.
	For any $t\in \R$, we  define
	\begin{equation*}
		\Om_t:=\Big\{y\in\R^{2} : (t,y)\in\Om\Big\}\,,
	\end{equation*}
	and given any set $\Omega\subseteq\R^3$, we define its $1$-dimensional projections for $ p\in\{1,2,3\}$ as
	\[
	\pi_p(\Omega) := \Big\{ t\in\R:\, \exists \, (z_1,\, z_2, z_3)\in\Omega,\, z_p = t\Big\}\,.
	\]
	For every $t\leq -1$ we call
	\begin{align}\label{int0}
		\Omega^+(t) := \Big\{(x,y)\in\Om : x>t\Big\}\,, && \Omega^-(t) := \Big\{(x,y)\in\Om : x<t\Big\}\,, && \eps(t):=\mathcal H^{2}(\Om_t)\,.
	\end{align}
	Observe that by \eqref{eq:asimmpiccolamhat}
	\begin{equation}\label{int1}
		m(t) := \big| \Omega^-(t) \big| = \int_{-\infty}^t \eps(s)\,ds\leq 2\mh\,.
	\end{equation}
	We call $u$ a optimizer for
		$E_q(\Omega)$. We define then also, for every $t\leq -1$,
	\begin{align}\label{int2}
		\de(t):=\int_{\Om_t}{|\nabla u(t,y)|^2\,d\hc^{2}(y)}\,, && \mu(t):=\int_{\Om_t}{u(t,y)^2\,d\hc^{2}(y)}\,, 
	\end{align}
	which makes sense since $u$ is smooth inside $\Om$, see Theorem~\ref{thm: pde and L infty estimate}. 
	Applying the Faber--Krahn inequality in $\R^{2}$ to the set $\Omega_t$, and using the rescaling property of eigenvalues on $\R^{2}$, we know that
	\[
	\eps(t) \lambda_0(\Omega_t)=\hc^{2}(\Omega_t) \lambda_0(\Omega_t) \geq \lambda_0(B_{\R^2})\,,
	\]
	{calling $B_{\R^2}$ the ball of unit measure in $\R^{2}$.} As a trivial consequence, we can estimate $\mu$ in
	terms of $\eps$ and $\de$: in fact, noting that
	$u(t,\cdot)\in H^{1}_0(\Omega_t)$ and writing
	$\nabla u = (\nabla_1 u, \nabla_y u)$, we have
	\begin{equation}\label{eq:muest}
		\mu(t)=\int_{\Om_t}{u(t,\cdot)^2\,d\hc^{2}}\leq \frac{1}{\lambda_0(\Om_t)}\int_{\Om_t}{|\nabla_y u(t,\cdot)|^2\,d\hc^{2}}\leq C\eps(t)\de(t).
	\end{equation}
	We can now present two estimates which assure that $u$ and $\nabla u$
	cannot be too big in $\Om^-(t)$.
	\begin{lemma}\label{primastima}
		{Let $\Omega\subseteq \R^3$ and $u$ be as in Lemma~\ref{le:surgery}.} For every $t\leq -1$ the following inequalities hold: 
		\begin{align}\label{eq:udu-}
			\int_{\Omega^-(t)} u^2 \, dx\leq C_1 \eps(t)^{\frac{1}{2}}\de(t)+q C_m m(t)\,, &&
			\int_{\Omega^-(t)} |\nabla u|^2 \, dx\leq C_1 \eps(t)^{\frac{1}{2}}\de(t)+q C_m m(t)\,,
		\end{align}
		for some universal constants $C_1, C_m {>0}$ .
	\end{lemma}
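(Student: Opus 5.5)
We test the Euler--Lagrange equation \eqref{eq: PDE} against $u$ cut off to the left of the slice $\{x=t\}$, and use that $\lambda_{q,\Omega}\le K$ while the region $\Omega^-(t)$ is so small in measure that its Poincaré constant beats $K$. Concretely, we test with $\varphi = u\,\mathds{1}_{\{x<t\}}$. Since $u$ is smooth inside $\Omega$, we integrate by parts on $\Omega^-(t)$, whose boundary inside $\Omega$ is exactly the slice $\Omega_t$. This gives
\[
\int_{\Omega^-(t)}|\nabla u|^2\,dx=\lambda_{q,\Omega}\int_{\Omega^-(t)}u^2\,dx+\int_{\Omega_t}u\,\partial_1 u\,d\mathcal H^2-\frac q2\int_{\Omega^-(t)}u\,v_u\,dx .
\]
Equivalently, we can test the weak equation with $u\,\psi_n$, where $\psi_n$ is a Lipschitz approximation of $\mathds{1}_{\{x<t\}}$, and pass to the limit for a.e. $t$. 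The boundary term is controlled by Cauchy--Schwarz and \eqref{eq:muest}:
\[
\Big|\int_{\Omega_t}u\,\partial_1u\Big|\le \mu(t)^{1/2}\delta(t)^{1/2}\le C\varepsilon(t)^{1/2}\delta(t).
\]
For the nonlocal term we use Theorem~\ref{thm: pde and L infty estimate}. Under the hypotheses of Lemma~\ref{le:surgery} we have $E_q(\Omega)\le K\le E_1(B_1)$, possibly after adjusting constants, and $|\Omega|=|B_1|$, so $\|u\|_\infty,\|v_u\|_\infty\le C$. Hence
\[
\frac q2\Big|\int_{\Omega^-(t)}u\,v_u\Big|\le qC\,m(t),
\]
which is exactly the $qC_m m(t)$ contribution.

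Next we absorb the $L^2$ term. Since $u\,\mathds{1}_{\Omega^-(t)}$ does not vanish on the slice, we cannot apply Faber--Krahn on $\Omega^-(t)$ directly. Instead we extend $u|_{\Omega^-(t)}$ by reflection across $\{x=t\}$, or simply bound using the slice. The cleanest choice is to consider $w=u\,\mathds{1}_{\{x<t\}}$ on $\Omega^-(t)\cup R_t(\Omega^-(t))$ with $w$ even-reflected; this lies in $H^1_0$ of a set of measure at most $2m(t)\le 4\widehat m$. Faber--Krahn then yields
\[
\int_{\Omega^-(t)}u^2\le \frac{(4\widehat m)^{2/3}}{\lambda_0(B_1)|B_1|^{2/3}}\int_{\Omega^-(t)}|\nabla u|^2,
\]
using \eqref{int1}. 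By the choice \eqref{defmh} and $\lambda_{q,\Omega}\le K$, we get $\lambda_{q,\Omega}\int_{\Omega^-(t)}u^2\le \tfrac12\int_{\Omega^-(t)}|\nabla u|^2$. Absorbing this into the left-hand side gives the gradient estimate in \eqref{eq:udu-} with $C_1=2C$ and $C_m$ doubled. The $L^2$ estimate then follows from the same Faber--Krahn inequality, since its constant is at most $1/(2K)\le1$.

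The main obstacle is the justification of the boundary term and of the reflection. The set $\Omega$ is only open, $u$ is smooth only in the interior, and the slice may touch $\partial\Omega$. We handle this by working with the Lipschitz cut-offs $\psi_n(x)$, which converge to $\mathds{1}_{\{x<t\}}$. For a.e. $t$ (Lebesgue points of $\delta$ and $\mu$, which are $L^1$ in $t$) the term $\int u\,\partial_1u\,\psi_n'$ converges to the slice integral. This identifies the boundary term without any regularity of $\partial\Omega$. The remaining $t$ are then handled by the continuity of both sides, or the estimate is only needed a.e. in the subsequent ODE argument.
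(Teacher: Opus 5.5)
Your proof is correct and is essentially the paper's argument: the energy identity from integrating the equation against $u$ over $\Omega^-(t)$, Cauchy--Schwarz on the slice combined with $\mu(t)\le C\varepsilon(t)\delta(t)$, the uniform $L^\infty$ bounds on $u$ and $v_u$ giving the $qC_m m(t)$ term, and the even reflection across $\{x=t\}$ with Faber--Krahn and the choice of $\widehat m$ to absorb the $\lambda_{q,\Omega}$-term. The differences are cosmetic. The paper absorbs in the $L^2$ inequality first and then reinserts it into the energy identity, while you absorb in the gradient inequality and then read off the $L^2$ bound. You also add a cut-off justification of the slice term for a.e.\ $t$, which the paper takes for granted; your aside that the remaining $t$ follow ``by continuity of both sides'' does not hold as stated, since $\varepsilon$ and $\delta$ need not be continuous in $t$, but, as you note, a.e.\ $t$ is all the subsequent surgery argument uses.
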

The proof of the above Lemma follows, up to a few minor changes, as in~\cite[Lemma~2.3]{MazzoleniPratelli}, by working on $u$. 
The main difference is that $u$ solves the PDE
\begin{equation}\label{eq:PDEq}
\begin{cases}
-\Delta u=\lambda_q u- \frac{q}{2} v_u,\qquad&\text{in }\Omega,\\
u=0,\qquad &\text{on }\partial \Omega,
\end{cases}
\end{equation}
instead of being first eigenfunction of the Dirichlet Laplacian in $\Omega$. 
In particular, if $u\geq 0$, we note that $u$ solves the differential inequality $-\Delta u\leq \lambda_q u$, in $\Omega$ and things work as in~\cite{MazzoleniMuratovRuffini} without the need of the additional term $C_m m(t)$.
We reproduce fully the proof here for the sake of completeness.
		\begin{proof}
			Let us fix $t\leq -1$. Consider the set $\Om_S^-$ obtained by the union of $\Om^-(t)$ and its reflection with respect to the plane $\{x=t\}$, and call $u_S \in H^{1}_0(\Om_S)$ the function obtained by reflecting $u$. Using the Faber-Krahn inequality, we find then
			\[
			\frac{\lambda_0(B_1)|B_1|^{\frac23}}{\big(2m(t)\big)^{\frac{2}{3}}} = \frac{\lambda_0(B_1)|B_1|^{\frac 23}}{|\Om_S^-|^{\frac{2}{3}}}
			\leq \lambda_0(\Om_S^-) \leq \frac{\begin{aligned}\int_{\Om^-_S}|\nabla u_S|^2\,dx\end{aligned}}{\begin{aligned}\int_{\Om^-_S}u_S^2\,dx\end{aligned}}= \frac{\begin{aligned}\int_{\Om^-(t)}|\nabla u|^2\,dx\end{aligned}}{\begin{aligned}\int_{\Om^-(t)}u^2\,dx\end{aligned}}
			\]
			by the symmetry of $\Om^-_S$, and using the scaling. This estimate gives
			\begin{equation}\label{eq:eq1}
				\int_{\Om^-(t)} u^2\,dx \leq \frac{\big(2m(t)\big)^{\frac 23}}{\lambda_0(B_1)|B_1|^{\frac23}}\, \int_{\Om^-(t)} |\nabla u|^2\,dx
			\end{equation}
			which in particular, being $m(t)\leq 2\mh$ and recalling~\eqref{defmh}, implies
			\begin{equation}\label{estray2}
				\int_{\Omega^-(t)} u^2\,dx \leq \frac 12\,.
			\end{equation}
			On the other hand, recalling~\eqref{eq:PDEq}, by Schwarz inequality, the uniform $L^\infty$ bound on $v_u$ and $u$, recalling $m(t)=|\Omega^-(t)|$ and using~\eqref{eq:muest} we have 
			\begin{equation}\label{eq:eq2}\begin{split}
					\int_{\Om^-(t)} |\nabla u|^2\,dx &= \int_{\Om^-(t)} \lambda_q u^2-\frac{q}{2}v_u u\,dx + \int_{\Om_t}  u \,\frac{\partial u}{\partial \nu}\,d\mathcal H^2\\
					&\leq K \int_{\Om^-(t)} u^2\,dx +q C_m m(t)+ \sqrt{\int_{\Om_t} u^2\,d\mathcal H^2  \int_{\Om_t} |\nabla u|^2\,d\mathcal H^2}\\
					&\leq K \int_{\Om^-(t)} u^2\,dx + C \eps(t)^{\frac 1{2}} \delta(t)+q C_m m(t)\,.
			\end{split}\end{equation}
			It is now easy to obtain~\eqref{eq:udu-} combining~\eqref{eq:eq1} and~\eqref{eq:eq2}. In fact, by inserting the latter into the first, we find
			\[
			\int_{\Om^-(t)} u^2\,dx \leq  \frac{\big(2m(t)\big)^{\frac 23}}{\lambda_0(B_1)|B_1|^{\frac23}}\, \bigg( K \int_{\Om^-(t)} u^2\,dx + C \eps(t)^{\frac 1{2}} \delta(t)+q C_m m(t) \bigg)\,,
			\]
			which by~(\ref{defmh}) again yields
			\begin{equation}\label{eq:leftudu-}
				\frac{1}{2} \int_{\Om^-(t)} u^2\,dx \leq  \frac{\big(2m(t)\big)^{\frac 23}}{\lambda_0(B_1)|B_1|^{\frac23}}\,\left[ C \eps(t)^{\frac 1{2}} \delta(t) +q C_m m(t)\right]
				\leq C \eps(t)^{\frac 1{2}} \delta(t)+q C_m m(t) \,.
			\end{equation}
			The left estimate in~\eqref{eq:udu-} is then obtained. To obtain the right one, one has then just to insert~\eqref{eq:leftudu-} into~\eqref{eq:eq2}.
		\end{proof}
	
	Let us go further into the construction, giving some additional definitions. For any $t\leq -1$ and $\sigma(t)>0$, we define the cylinder $Q(t)$ as
	\begin{equation}\label{defcyl}
		Q(t):=\Big\{(x,y)\in \R^3: \, t-\sigma(t) < x < t,\ (t,y) \in\Om\Big\} = \big(t-\sigma(t),t\big) \times \Omega_t\,,
	\end{equation}
	where for any $t\leq -1$ we set
	\begin{equation}\label{defsigma}
		\sigma(t)= \eps(t)^{\frac 1{2}}\,.
	\end{equation}
	We let also $\Omt(t)=\Omega^+(t)\cup Q(t)$, and we introduce $\ut\in H^1_0\big(\Omt(t)\big)$ as
	\begin{equation}\label{utilde}
		\ut(x,y):=\left\{
		\begin{array}{ll}
			u(x,y) &\hbox{if $(x,y)\in \Omega^+(t)$}\,, \\[5pt]
			\begin{aligned}
				\frac{x-t+\sigma(t)}{\sigma(t)}\,u(t,y)
			\end{aligned}&\hbox{if $(x,y)\in Q(t)$}\,.
		\end{array}
		\right.
	\end{equation}
	The fact that $\ut$ vanishes on $\partial\Omt(t)$ is obvious; moreover, $\nabla u=\nabla \ut$ on $\Omega^+(t)$, while on $Q(t)$ one has
	\begin{equation}\label{estdut}
		\nabla \ut(x,y)=\left(\frac{u(t,y)}{\sigma(t)}\,,\,\frac{x-t+\sigma(t)}{\sigma(t)}\,\nabla_y u(t,y)\right)\,.
	\end{equation}
	
	A simple calculation allows us to estimate the integrals of $\ut$ and $\nabla \ut$ on $Q(t)$.
	\begin{lemma}\label{lemmatest}
		For every $t\leq -1$, one has
		\begin{align}\label{newtest}
			\int_{Q(t)}|\nabla \ut|^2 \, dx\leq C_2\eps(t)^{\frac{1}{2}}\de(t) \,, &&
			\int_{Q(t)} \ut^2 \, dx\leq C_2 \eps(t)^{\frac 3{2}} \delta(t)\,,
		\end{align}
		for a universal constant $C_2>0$.
	\end{lemma}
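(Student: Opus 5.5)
The plan is a direct computation on the cylinder $Q(t)=(t-\sigma(t),t)\times\Omega_t$, using the explicit formula \eqref{estdut} for $\nabla\ut$. Fubini reduces everything to integrals over the slice $\Omega_t$, which are controlled by $\mu(t)$ and $\de(t)$. The bound \eqref{eq:muest}, $\mu(t)\le C\eps(t)\de(t)$, then converts them into powers of $\eps(t)$ times $\de(t)$. The choice $\sigma(t)=\eps(t)^{1/2}$ is what balances the two contributions.

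For the gradient I will split $|\nabla\ut|^2$ on $Q(t)$ into the normal and tangential parts given by \eqref{estdut}.

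The normal part is $u(t,y)^2/\sigma(t)^2$, constant in $x$. Integrating over $x\in(t-\sigma,t)$ gives
\[
\int_{Q(t)}\frac{u(t,y)^2}{\sigma(t)^2}\,dx\,dy=\frac{\mu(t)}{\sigma(t)}\le C\,\frac{\eps(t)\de(t)}{\eps(t)^{1/2}}=C\eps(t)^{1/2}\de(t).
\]

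For the tangential part, $\big(\tfrac{x-t+\sigma}{\sigma}\big)^2\le 1$ and $|\nabla_y u(t,y)|^2\le|\nabla u(t,y)|^2$. Its integral is therefore at most
\[
\sigma(t)\int_{\Omega_t}|\nabla_y u(t,y)|^2\,d\hc^2(y)\le\sigma(t)\de(t)=\eps(t)^{1/2}\de(t).
\]
Adding the two parts gives the first inequality in \eqref{newtest}.

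For the $L^2$ bound, $\ut^2\le u(t,y)^2$ on $Q(t)$, so
\[
\int_{Q(t)}\ut^2\le\sigma(t)\mu(t)\le C\eps(t)^{1/2}\,\eps(t)\de(t)=C\eps(t)^{3/2}\de(t).
\]
This is the second inequality. The constant $C_2$ is universal, since the constant in \eqref{eq:muest} comes only from the two-dimensional Faber--Krahn inequality.

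The only point needing care is that these slice quantities make sense. This holds because $u$ is smooth inside $\Omega$ by Theorem~\ref{thm: pde and L infty estimate}. It is also needed that $u(t,\cdot)\in H^1_0(\Omega_t)$, which is what justifies \eqref{eq:muest}; this holds at least for a.e.\ $t$, which suffices for the later use. I do not expect any genuine obstacle: the whole proof is the scaling balance produced by the choice $\sigma=\eps^{1/2}$.
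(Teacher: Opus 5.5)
Your proof is correct and is essentially the paper's argument. The paper simply defers to the direct computation of Mazzoleni--Pratelli (Lemma 2.4): integrate the explicit gradient of $\ut$ over the cylinder $Q(t)$ by Fubini, and combine it with the slice Faber--Krahn bound $\mu(t)\le C\eps(t)\de(t)$ and the choice $\sigma(t)=\eps(t)^{1/2}$, without using the PDE satisfied by $u$.
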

	The proof of the above Lemma follows as~\cite[Lemma~2.4]{MazzoleniPratelli}, as it does not depend on the PDE solved by $u$.

	Another simple but useful estimate concerns the Rayleigh quotients of the functions $\ut$ on the sets $\Omt(t)$: notice that, while $u$ has unit $L^2$ norm, the modifed function $\ut$ in general is not normalized so we need to take care also of its norm.
	
	\begin{lemma}\label{noth}
		There exists a universal constant $C_3>0$ such that for every $t\leq -1$, one has
		\begin{equation}\label{est1a}
			\int_{\Omt(t)}|\nabla \ut|^2\, dx\leq \int_\Omega|\nabla u|^2\, dx+ C_3\eps(t)^{\frac 1 {2}}\de(t)\,,\qquad \int_{\Omt(t)}\ut^2\, dx\geq \int_\Omega u^2\, dx-C_3\eps(t)^{\frac 1 {2}}\de(t)-q C_m m(t)\,.
		\end{equation}
	\end{lemma}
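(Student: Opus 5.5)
The plan is to split $\Omt(t)=\Omega^+(t)\cup Q(t)$ and compare each piece with the corresponding piece of $\Omega=\Omega^+(t)\cup\Omega^-(t)$, up to the negligible slice $\{x=t\}$. On $\Omega^+(t)$ we have $\ut=u$ by construction~\eqref{utilde}. So both estimates reduce to controlling the contribution of $Q(t)$ (for the gradient) and of $\Omega^-(t)$ (for the $L^2$ norm). Lemma~\ref{lemmatest} and Lemma~\ref{primastima} give exactly these controls.

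For the gradient inequality I write
\[
\int_{\Omt(t)}|\nabla\ut|^2\,dx=\int_{\Omega^+(t)}|\nabla u|^2\,dx+\int_{Q(t)}|\nabla\ut|^2\,dx\le\int_\Omega|\nabla u|^2\,dx+C_2\eps(t)^{\frac12}\de(t).
\]
Here I drop the nonnegative term $\int_{\Omega^-(t)}|\nabla u|^2$ and use the first bound in~\eqref{newtest}. Note that no term in $q\,m(t)$ appears on this side, which is consistent with the statement.

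For the $L^2$ inequality I use $\int_{Q(t)}\ut^2\ge0$ and write
\[
\int_{\Omt(t)}\ut^2\,dx\ge\int_{\Omega^+(t)}u^2\,dx=\int_\Omega u^2\,dx-\int_{\Omega^-(t)}u^2\,dx\ge\int_\Omega u^2\,dx-C_1\eps(t)^{\frac12}\de(t)-qC_m m(t).
\]
The last step is the left estimate in~\eqref{eq:udu-} of Lemma~\ref{primastima}. Taking $C_3:=\max\{C_1,C_2\}$ then gives both inequalities in~\eqref{est1a} with universal constants.

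There is no real obstacle here, since the analytic work was already done in Lemma~\ref{primastima} and Lemma~\ref{lemmatest}. The only care needed is bookkeeping. First, the slice $\{x=t\}$ has zero Lebesgue measure, so the decomposition of the integrals is legitimate. Second, $\ut$ is continuous across $\{x=t\}$, so it belongs to $H^1_0(\Omt(t))$ and its gradient is computed piecewise as in~\eqref{estdut}. Third, the constants $C_1$, $C_2$, $C_m$ are universal: they depend only on $K$ from~\eqref{eq:defK} and on the uniform $L^\infty$ bounds of Theorem~\ref{thm: pde and L infty estimate} for $|\Omega|=|B_1|$.
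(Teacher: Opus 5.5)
Your proposal is correct and is essentially the paper's own argument. You split $\Omt(t)=\Omega^+(t)\cup Q(t)$ and drop the nonnegative terms $\int_{\Omega^-(t)}|\nabla u|^2$ and $\int_{Q(t)}\ut^2$. The gradient bound then follows from Lemma~\ref{lemmatest}, the $L^2$ bound from the left estimate of Lemma~\ref{primastima}, and you take $C_3=\max\{C_1,C_2\}$.
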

	\begin{proof}
		It is enough to note that, by definition of $\Omt(t)$ and using Lemma~\ref{primastima} and~\ref{lemmatest}, we obtain for the gradient term\[
		\begin{split}
			\int_{\Omt(t)}|\nabla \ut|^2\, dx&=\int_{\Om^+(t)}|\nabla u|^2\, dx+\int_{Q(t)}|\nabla \ut|^2\, dx\\
			&=\int_\Omega|\nabla u|^2\, dx+\int_{Q(t)}|\nabla \ut|^2\, dx-\int_{\Om^-(t)}|\nabla u|^2\, dx\leq \int_\Omega|\nabla u|^2\, dx+C_2\eps(t)^{\frac 1 {2}}\de(t)\,,
		\end{split}
		\]
		while for the function, we have \[
		\int_{\Omt(t)} \ut^2\, dx=\int_{\Om^+(t)} u^2\, dx+\int_{Q(t)} \ut^2\, dx=\int_\Omega u^2\, dx+\int_{Q(t)}\ut^2\, dx-\int_{\Om^-(t)} u^2\, dx\geq \int_\Omega u^2\, dx-C_1\eps(t)^{\frac 1 {2}}\de(t)-q C_m m(t)\,.
		\]
	\end{proof}

	We can now enter in the central part of our construction. Basically, we aim to show that either $\Omega$ already has bounded left ``tail'' in direction $e_1$, or some rescaling of $\Omt(t)$ has energy lower than that of $\Omega$. 
	\begin{lemma}\label{threeconditions}
          Let $\Omega$ be as in the assumptions of
          Lemma~\ref{primastima}, and let $t\leq -1$. There exist
          universal $ q_1\in(0,1)$ and $C_4>2$ such
          that, for all $q\leq q_1$ exactly one of the three
          following conditions hold:
		\begin{enumerate}
			\item[(1) ] $\max\big\{ \eps(t),\, \delta(t) \big\} > 1$;
			\item[(2) ] (1) does not hold and $m(t) \leq C_4 \big( \eps(t) + \delta(t)\big) \eps(t)^{\frac 1{2}}$;
			\item[(3) ] (1) and~(2) do not hold and there holds
            \[
			\frac{\int_{\Omh(t)}|\nabla \uh|^2\, dx}{\int_{\Omh(t)}\uh^2\, dx}\leq \int_\Omega|\nabla u|^2\, dx,\qquad \text{and}\qquad
			E_q\big(\Omh(t)\big)< E_q(\Omega),
			\] where for $t\leq -1$  we set
			\[
			\Omh(t) := \big|B_1\big|^{\frac13}\big| \Omt(t) \big|^{-\frac 13} \Omt(t),\qquad \text{and} \qquad \uh(x)=\ut\big(|B_1|^{-\frac13}|\Omt(t)|^{\frac13} x \big),\quad\text{for $x\in \Omh(t)$.}
			\]
		\end{enumerate}
	\end{lemma}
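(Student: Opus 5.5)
Since the three conditions are mutually exclusive by construction, it suffices to show that if (1) and (2) fail, then (3) holds. So assume $\eps(t),\de(t)\le 1$ and $m(t)> C_4(\eps(t)+\de(t))\eps(t)^{1/2}$, with $C_4$ and $q_1$ to be fixed. The plan follows \cite[Lemma~2.5]{MazzoleniPratelli}. We compare the Rayleigh quotient of $\uh$ on $\Omh(t)$ with $\int_\Omega|\nabla u|^2$, and then handle the Coulomb term separately.

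\textbf{Step 1: volume of $\Omt(t)$ and effect of the rescaling.} We have $|\Omt(t)|=|\Om|-m(t)+\sigma(t)\eps(t)=|B_1|-m(t)+\eps(t)^{3/2}$. Since (2) fails, $\eps(t)^{3/2}\le m(t)/C_4$, so for $C_4>2$ we get $|\Omt(t)|\le |B_1|-m(t)/2$. Dilating by $s=|B_1|^{1/3}|\Omt(t)|^{-1/3}\ge 1$ multiplies the Dirichlet Rayleigh quotient by $s^{-2}\le 1-c\,m(t)$, with $c$ universal; this uses $m(t)\le 2\mh<1/2$.

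\textbf{Step 2: Rayleigh quotient.} By Lemma~\ref{noth},
\[
\frac{\int_{\Omt}|\nabla\ut|^2}{\int_{\Omt}\ut^2}\le \frac{\lambda+C_3\eps^{1/2}\de}{1-C_3\eps^{1/2}\de-qC_m m},
\]
where $\lambda=\int_\Om|\nabla u|^2\le K$. Since $\eps^{1/2}\de\le m/C_4$, this is at most $\lambda+C(1/C_4+q)m$. After rescaling, the quotient of $\uh$ is at most $(1-cm)(\lambda+C(1/C_4+q)m)$. This is $\le\lambda$ as soon as $c\lambda\ge C(1/C_4+q)$. Here I use $\lambda\ge\lambda_0(\Om)\ge c'$ from Faber--Krahn. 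So it suffices to take $C_4$ large and $q_1$ small, which gives the first inequality of (3) with a strict margin of order $m(t)$, namely quotient $\le \lambda- c''m(t)$.

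\textbf{Step 3: the Coulomb term (main obstacle).} We test $E_q(\Omh)$ with $w=\uh/\|\uh\|_{L^2}$, so that $E_q(\Omh)\le \lambda-c''m+\frac q2 D(w,w)$. It remains to show $\frac q2 D(w,w)\le \frac q2 D(u,u)+\tfrac{c''}{2}m$ for $q\le q_1$. Under dilation, $D$ scales like $s^{5}$ applied to $\ut$ with the $L^2$ normalization, and $s^5/\|\uh\|^2$ is $1+O(m)$. So one needs $D(\ut,\ut)\le D(u,u)+Cm$.

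To get this, split $u=u\chi_{\Om^+}+u\chi_{\Om^-}$ and $\ut=u\chi_{\Om^+}+\ut\chi_Q$, and expand $D$ bilinearly. The cross and tail terms are bounded by $\|v_f\|_{L^\infty}\|g\|_{L^1}$, using the uniform bound $\|v_u\|_{L^\infty}\le C$ from Theorem~\ref{thm: pde and L infty estimate} (and similar bounds for the pieces via Hardy--Sobolev \eqref{eq:Hardy-Sobolev} or the $L^\infty$ bound on $u$). For $L^1$ norms we use $\|u\|_{L^1(\Om^-)}\le C m$ and $\|\ut\|_{L^1(Q)}\le C\sigma\eps\le Cm$. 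Hence $|D(\ut,\ut)-D(u,u)|\le Cm$. Sign changes of $u$ cause no trouble here, since everything is estimated in absolute value. Multiplying by $q$, the term is absorbed for $q_1$ small, which gives the strict inequality $E_q(\Omh(t))<E_q(\Om)$, with strictness coming from $m(t)>0$, true in case (3).
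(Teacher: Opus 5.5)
Your proposal is correct and follows essentially the same route as the paper. You use Lemma~\ref{noth} together with the volume rescaling to push the Rayleigh quotient of $\uh$ below $\int_\Omega|\nabla u|^2$ with a margin of order $m(t)$. You then split $D(\ut,\ut)$ bilinearly over $\Om^+(t)$, $\Om^-(t)$ and $Q(t)$, and use $L^\infty$ bounds to get $D(\ut,\ut)\le D(u,u)+Cm(t)$, which is absorbed for $q\le q_1$. The only cosmetic difference is that you invoke the failure of (2) up front to absorb $\eps(t)^{3/2}$ and $\eps(t)^{1/2}\de(t)$ into $m(t)$, whereas the paper expands first and uses it afterwards.
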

	\begin{proof}
		Assume~(1) is false. Then it is possible to apply Lemma~\ref{noth}, to obtain
\begin{equation}\label{putinto}
    		\begin{gathered}
			\int_{\Omt(t)}|\nabla \ut|^2\, dx\leq \int_\Omega|\nabla u|^2\, dx+ C_3\eps(t)^{\frac 1 {2}}\de(t)\,,\\ \int_{\Omt(t)}\ut^2\, dx\geq \int_\Omega u^2\, dx-C_3\eps(t)^{\frac 1 {2}}\de(t)-q C_m m(t)=1-C_3\eps(t)^{\frac 1 {2}}\de(t)-q C_m m(t)\,.
		\end{gathered}
\end{equation}
		By the scaling properties of the eigenvalue and the fact that $\big| \Omh(t)\big|=|B_1|$, we know that
		\[
		\frac{\int_{\Omh(t)}|\nabla \uh|^2\, dx}{\int_{\Omh(t)}\uh^2\, dx}= \frac{\big| \Omt(t) \big|^{\frac 23}}{|B_1|^{\frac23}} 
		\frac{\int_{\Omt(t)}|\nabla \ut|^2\, dx}{\int_{\Omt(t)}\ut^2\, dx}\,.
		\]
		By construction,
		\[
		\big| \Omt(t)\big|= \big| \Om^+(t)\big|+ \big|Q(t)\big| = |B_1| - m(t) + \eps(t)^{\frac 3{2}}\,,
		\]
		hence the above estimates, the scaling of the integrals due to the definition of $\uh$ and~\eqref{putinto} lead to
		\begin{equation}\label{muchhere}\begin{split}
				\frac{\int_{\Omh(t)}|\nabla \uh|^2\, dx}{\int_{\Omh(t)}\uh^2\, dx} &= \Big( 1 - \frac{m(t)}{|B_1|} + \frac{\eps(t)^{\frac32}}{|B_1|} \Big)^{\frac 23}\, \frac{\int_{\Omt(t)}|\nabla \ut|^2\, dx}{\int_{\Omt(t)}\ut^2\, dx},  \\
				&\leq\Big( 1 - \frac{2}{3|B_1|} \, m(t) +  \frac{2}{3|B_1|}\, \eps(t)^{\frac 3{2}} \Big) \Big(1+C_3\eps^{\frac{1}{2}}(t)\delta(t)+q C_m m(t)\Big)  \Big( \int_\Omega|\nabla u|^2\, dx+C_3\eps(t)^{\frac 1 {2}}\delta(t) \Big)\\
				&\leq \int_\Omega|\nabla u|^2\, dx - \frac{2\lambda_0(B_1)}{3|B_1|}\, m(t)+q K C_m m(t)+ \frac{2K}{3|B_1|}\, \eps(t)^{\frac 3{2}}+\bigg(2C_3+KC_3+ \frac{2}{3|B_1|}\bigg)\eps(t)^{\frac 1 {2}}\delta(t)\,\\
				&\leq \int_\Omega|\nabla u|^2\, dx - \frac{49}{100}\pi\, m(t)+ \frac{2K}{3|B_1|}\, \eps(t)^{\frac 3{2}}+\bigg(2C_3+KC_3+ \frac{2}{3|B_1|}\bigg)\eps(t)^{\frac 1 {2}}\delta(t)\, ,
			\end{split}
		\end{equation}
 noting that (in three dimension) $\frac{\pi}{2}=\frac{2\lambda_0(B_1)}{3|B_1|}$ and up to take $ q_1$ such that \[
q_1 K C_m<\frac{\pi}{100}=\frac{2\lambda_0(B_1)}{150|B_1|}.
\]	
		
		At this point, defining $C_4:= \max{\{\frac{2(K+1)}{3|B_1|}+2C_3+KC_3,2\}}$, if \[
		m(t) \leq C_4 \big( \eps(t) + \delta(t)\big) \eps(t)^{\frac 1{2}},
		\] 
		then condition~(2) holds true. 
		Otherwise, we immediately have that 
		\begin{equation}\label{eq:stimala1}
			\frac{\int_{\Omh(t)}|\nabla \uh|^2\, dx}{\int_{\Omh(t)}\uh^2\, dx} \leq \int_\Omega|\nabla u|^2\,dx-\left(\frac{49}{100}\pi-1\right)m(t)\leq \int_\Omega|\nabla u|^2\, dx-C_5m(t),
		\end{equation}
		for a universal constant $C_5>0$, therefore the first part of the third claim is verified.
		
		On the other hand, we note that, using the $L^\infty$ bound of $u$, see Theorem~\ref{thm: pde and L infty estimate}, the fact that $\|\ut\|_{L^\infty}\leq \|u\|_{L^\infty}$ by construction and also by~\cite[Lemma~2.4]{FuscoPratelli}, 		
		\[
		\begin{split}
		D(\ut,\ut)&= D(u,u)-2\int_{\Om^-(t)}\int_{\Om^+(t)}\frac{u(x)u(y)}{|x-y|}\,dxdy-\int_{\Om^-(t)}\int_{\Om^-(t)}\frac{u(x)u(y)}{|x-y|}\,dxdy\\
		&\hspace{3cm}+2\int_{\Om^+(t)}\int_{Q(t)}\frac{\ut(x)\ut(y)}{|x-y|}\,dxdy+\int_{Q(t)}\int_{Q(t)}\frac{\ut(x)\ut(y)}{|x-y|}\,dxdy\\
		&\leq D(u,u)+\tilde C_m m(t)+ C_{fp}\eps^{\frac{3}{2}}(t),
		\end{split}
		\]
		where $\tilde C_m$ and $C_{fp}$ are positive universal constants.
		Then we can estimate, using the appropriate scalings,  
		\begin{equation}\label{eq:stimaV}
			\begin{split}
				&\frac{D(\uh,\uh)}{\int_{\Omh(t)}\uh^2\, dx}\leq \frac{D(\ut,\ut)}{\int_{\Omt(t)}\ut^2\, dx}\left(1-\frac{m(t)}{|B_1|}+\frac{\eps(t)^{\frac{3}{2}}}{|B_1|}\right)^{-\frac{2}{3}}\leq \Big(1+\frac{2}{3|B_1|}m(t)\Big)\frac{D(\ut,\ut)}{\int_{\Omt(t)}\ut^2\, dx}\\
				&\leq  \Big(1+\frac{2}{3|B_1|}m(t)\Big)\Big(1+C_3\eps^{\frac{1}{2}}(t)\delta(t)\Big)\Big(D(u,u)+\tilde C_m m(t)+C_{fp}\eps^{\frac{3}{2}}(t)\Big)\\
				&\leq D(u,u) +C\|u\|^2_{L^\infty}m(t)+\tilde C_m m(t)+C_{fp}\eps^{\frac{3}{2}}(t)+C_3\|u\|^2_{L^\infty}\eps^{\frac{1}{2}}(t)\delta(t)\\
				&\leq D(u,u) +C\|u\|^2_{L^\infty}m(t)+\tilde C_m m(t)+(C_{fp}+C_3\|u\|^2_{L^\infty})m(t)\\
				&=D(u,u)+C_6 m(t).
			\end{split}
		\end{equation}
		Then, putting together~\eqref{eq:stimala1} and~\eqref{eq:stimaV}, recalling also Remark~\ref{rmk:scaleinvariant} for the equivalence of the scale invariant energy, 
		\begin{equation}
			\begin{split}
				& E_q(\Omh(t))\leq  \frac{\int_{\Omh(t)}|\nabla \uh|^2\, dx}{\int_{\Omh(t)}\uh^2\, dx} +\frac{q}{2} \frac{D(\uh,\uh)}{\int_{\Omh(t)}\uh^2\, dx}\\
				&\leq \int_\Omega|\nabla u|^2\, dx+\frac{q}{2} D(u,u)-(C_5-\frac{q}{2} C_6)m(t)\\
				&\leq \int_\Omega|\nabla u|^2\, dx+\frac{q}{2} D(u,u)-\frac{C_5}{2}m(t),
			\end{split}
		\end{equation}
		up to taking $q\leq q_1<\min\left\{\sqrt{\frac{C_5}{2C_6}}, \frac{\pi}{100 K C_m}\right\}$, so that in
		this case condition $(3)$ holds and the proof is concluded.
	\end{proof}
	
	Once we have Lemma~\ref{threeconditions}, the rest of the proof follows as in~\cite{MazzoleniPratelli} or~\cite{MazzoleniRuffini} as we detail here below.
	
	\begin{proof}[Proof of Lemma~\ref{le:surgery}]
		It is enough to repeat the {analogs} of~\cite[Lemma~8.7,
		Lemma~8.8, Proposition~8.1 and Section~9.2]{MazzoleniRuffini}, noting that it is
		only a geometric argument and having $\int_\Omega|\nabla u|^2\, dx$
		instead of $\lambda_0(\Omega)$ does not change anything.
	\end{proof}

\section{The auxiliary problem}\label{sec:Auxiliary}
To avoid the restriction imposed by the measure constraint, we employ the strategy first suggested by Aguilera, Alt, and Caffarelli \cite{AguileraAltCaffarelli} adding the following penalization term.	Let $\eta\in(0,1)$ and consider the piecewise linear function 
\[
f_\eta\colon \R^+\rightarrow \R,\qquad f_\eta(s)=
\begin{cases}
\eta(s-|B_1|),\qquad \text{if }s\leq |B_1|,\\						
	\frac{1}{\eta}(s-|B_1|),\qquad \text{if }s\geq |B_1|.
\end{cases}
\]
It is easy to check that, for all $0\leq s_2\leq s_1,$ there holds
\begin{equation}\label{eq:prop feta}
	\eta(s_1-s_2)\leq f_\eta(s_1)-f_\eta(s_2)\leq \frac{1}{\eta}(	s_1-s_2).
\end{equation}

Let us now consider the problem 
\begin{equation}\label{eq:def of inf E_{q,M,eta}(Omega,u) global}
	\inf_{\substack{\Omega\subseteq \R^3\\\text{quasi-open}}}\inf_{u\in \H(\Om)} E_{q,M,\eta}(\Om,u). 
\end{equation}
where 
\begin{equation}
E_{q,M,\eta}(\Om,u)= E_{q,M}(u,\Omega)+f_\eta(|\Om|).
\end{equation}

Even without knowing at this point whether  existence of an optimal set for problem \eqref{eq:def of inf E_{q,M,eta}(Omega,u) global} holds true, we point out that we can at least select one \textit{good} minimizing sequence. 
\begin{lemma}\label{le:good minizing sequence for surgery}
    Let $q\in [0,1)$. Then there exists a universal constant $\eta_1>0$ such that for all $\eta\in(0,\eta_1)$ there exists a minimizing sequence $(\Omega_n)_n$ of problem  \eqref{eq:def of inf E_{q,M,eta}(Omega,u) global} such that $\Omega_n$ are connected and
    \[  C\leq |\Omega_n|\leq |B_2|\]
    for a universal constant $C$.
\end{lemma}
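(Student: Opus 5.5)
We plan to start from an arbitrary minimizing sequence $(\Omega_n,u_n)$ for problem~\eqref{eq:def of inf E_{q,M,eta}(Omega,u) global} and modify it without increasing the energy, so that the volume bounds and connectedness hold. The first step is an upper bound on the infimum. Testing with $\Omega=B_1$ and its normalized optimal function, and using $f_\eta(|B_1|)=0$ and $q<1$, we get that the infimum is at most $E_q(B_1)\le E_1(B_1)$. Hence we may assume $E_{q,M,\eta}(\Omega_n,u_n)\le E_1(B_1)+1$ for every $n$.

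\emph{Upper volume bound.} Since $E_{q,M}(u_n,\Omega_n)\ge 0$ by Lemma~\ref{le:positivityD(u,u)}, we obtain $f_\eta(|\Omega_n|)\le E_1(B_1)+1$. If $|\Omega_n|\ge|B_2|$, then $f_\eta(|\Omega_n|)\ge (|B_2|-|B_1|)/\eta=7|B_1|/\eta$. This exceeds $E_1(B_1)+1$ as soon as $\eta<\eta_1:=7|B_1|/(E_1(B_1)+1)$, which is universal. So $|\Omega_n|<|B_2|$.

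\emph{Lower volume bound.} Since $f_\eta\ge -\eta|B_1|\ge -|B_1|$, we get $E_{q,M}(\Omega_n)\le E_1(B_1)+|B_1|$. By Remark~\ref{rmk:M} (with $M$ fixed there) and Proposition~\ref{prop:equivalence E_q and E_q,M}, the optimal $u_n$ can be taken $L^2$-normalized, so that $E_{q,M}(\Omega_n)=E_q(\Omega_n)\ge\lambda_0(\Omega_n)$. Faber--Krahn then gives $\lambda_0(B_1)|B_1|^{2/3}|\Omega_n|^{-2/3}\le E_1(B_1)+|B_1|$, which is a universal lower bound $|\Omega_n|\ge C$.

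\emph{Connectedness.} This is the step I expect to be the main obstacle, because the nonlocal term couples components. Let $u_n$ be optimal. We may replace $\Omega_n$ by $\{u_n\neq0\}$: this keeps the energy of $u_n$ and decreases $|\Omega_n|$, hence decreases $f_\eta$ since $f_\eta$ is increasing. After this replacement, the connected components $A_i$ of $\Omega_n$ carry $w_i=u_n\mathds 1_{A_i}$ (these are quasi-components in the quasi-open setting; if needed we first approximate by open sets). We then compare with a single component. Write $t_i=\|w_i\|_{L^2}^2$, so that $\sum t_i=1$. The local part splits as $\int|\nabla u_n|^2=\sum_i\int|\nabla w_i|^2$. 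For the Coulomb part, the cross terms $D(w_i,w_j)$ have no sign, but translating the components far apart makes them vanish in the limit while changing nothing else. So, up to an arbitrarily small error, $E_q(u_n,\Omega_n)\ge\sum_i t_i\,\widetilde E_q(w_i,A_i)$, where $\widetilde E_q$ is the scale invariant functional of Remark~\ref{rmk:scaleinvariant}. Strictly speaking this is a reinterpretation: we replace the sequence by the translated configurations, whose energies converge to the same value, and which are still minimizing.

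Now pick $i$ minimizing $\widetilde E_q(w_i,A_i)$, and set $\Omega'_n=A_i$ with $u'=w_i/\sqrt{t_i}$. Then $E_q(u',A_i)\le E_q(u_n,\Omega_n)+o(1)$, and $f_\eta(|A_i|)\le f_\eta(|\Omega_n|)$. So $\Omega_n'$ is connected, and $(\Omega_n')_n$ is still minimizing. The volume bounds are then rederived for $\Omega_n'$ exactly as above: the upper bound is inherited, and the lower bound follows again from Faber--Krahn.

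The delicate point is handling the sign-indefinite cross terms rigorously. The spreading-apart argument uses that $D(w_i,w_j)\to0$ under translation, which holds by dominated convergence since $w_i\in L^2$ with finite measure support. The argument needs finitely many components, or a truncation to finitely many with a small error, which we would control using continuity of $D$ (Proposition~\ref{prop:continuity of D(u,u)}).
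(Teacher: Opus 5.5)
\textbf{Volume bounds.} Your two volume bounds are correct and follow the paper's argument: you test with $B_1$, use $f_\eta\ge(|B_2|-|B_1|)/\eta$ above $|B_2|$, and apply Faber--Krahn via Proposition~\ref{prop:equivalence E_q and E_q,M}. Two small corrections. You should also impose $\eta_1\le 1$. And you only get $E_{q,M}(\Omega_n)\le E_1(B_1)+1+\eta|B_1|$, not $E_1(B_1)+|B_1|$; this is harmless for the choice of $M$ in Remark~\ref{rmk:M}.

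\textbf{The gap: connectedness.} With $w_i=u_n\chi_{A_i}$ you have
\[
E_q(u_n,\Omega_n)=\sum_i t_i\,\widetilde E_q(w_i,A_i)+\frac q2\sum_{i\neq j}D(w_i,w_j).
\]
Keeping the best component only gives
\[
E_q(u',A_i)\le E_q(u_n,\Omega_n)-\frac q2\sum_{i\ne j}D(w_i,w_j).
\]
So you need the cross term to be $\ge -o(1)$, and spreading the components apart does not provide this. Translating simply deletes the cross term. The translated configurations therefore have energy close to $\sum_i t_i\widetilde E_q(w_i,A_i)$, not to $E_q(u_n,\Omega_n)$, and nothing says they are still minimizing.

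Worse, the sign is the unfavourable one. For any signs $\epsilon_i\in\{\pm1\}$, the function $\sum_i\epsilon_i w_i$ is admissible and differs from $u_n$ only in the cross term. That cross term averages to $0$ over random signs. Hence an optimal $u_n$ has $\sum_{i\ne j}D(w_i,w_j)\le 0$, typically strictly: if $w_1\ge0\ge w_2$ then $D(w_1,w_2)=-D(|w_1|,|w_2|)<0$. Discarding components thus throws away a favourable attraction between opposite charges, the same mechanism as in the Appendix. The gain in $f_\eta$ may be only $\eta(|\Omega_n|-|A_i|)$ by \eqref{eq:prop feta}, so it need not compensate this loss.

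\textbf{The missing idea.} The paper keeps the whole configuration and the same function.
\begin{enumerate}
\item Join the components by thin tubes of total volume at most $\vartheta$. This gives a connected set $\Omega_{n,\vartheta}\supset\Omega_n$ with $E_q(\Omega_{n,\vartheta})\le E_q(\Omega_n)$ by inclusion: the old function stays admissible, cross terms included, whatever their sign.
\item Rescale $\Omega_{n,\vartheta}$ by the factor $t=(|\Omega_n|/|\Omega_{n,\vartheta}|)^{1/3}\in[1-C\vartheta,1]$ to restore the volume. For the rescaled optimal function of $\Omega_{n,\vartheta}$, the Dirichlet term is multiplied by $t^{-2}\le 1+C\vartheta$ and the Coulomb term by $t^2\le 1$.
\end{enumerate}
Hence $E_{q,M,\eta}$ increases by at most $C\vartheta$, and $f_\eta$ is unchanged. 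Since $\vartheta$ is arbitrary, the connected sequence is still minimizing and keeps the volume bounds.
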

\begin{proof}
    	Let us suppose for the sake of contradiction that there exists a minimizing sequence $(\Omega_n)_n$ of problem  \eqref{eq:def of inf E_{q,M,eta}(Omega,u) global} such that $|\Om_{n}|> |B_2|$ for all $n\in \N$. We are then going to reach a contradiction as long as
          \[ 1/\eta\ge E_{1}(B_1).\] 
	Indeed it holds	 
       \[E_{q,M,\eta}(\Om_n)\leq \inf\{E_{q,M,\eta}(\Omega):\Omega\subseteq \R^3, \text{quasi-open} \}+1\leq E_{1}(B_1)+1,	\]
		On the other hand, by Lemma \ref{le:positivityD(u,u)}, since $|\Om_n|> |B_2|$ we have
		\[  E_{q,M,\eta}(\Om_{n})\geq \frac{1}{\eta}(|\Om_{n}|-|B_1|)\geq \frac{1}{\eta} (|B_2|-|B_1|). \]
		By choosing $\eta_1$ such that $\eta_1<1$ and
		\[ \frac{(|B_2|-|B_1|)}{\eta_1}>E_{1}(B_1)+1, \]
		we reach the desired contradiction. 
        
        Let us now fix 
        \[
        \widetilde{R}=\left(\frac{\lambda_0(B_1)}{2(1+E_1(B_1)+|B_1|)}\right)^{1/2}
        \]
        and suppose by contradiction that there exists a minimizing sequence $(\Omega_n)_n$ of problem  \eqref{eq:def of inf E_{q,M,eta}(Omega,u) global} such that $|\Om_{n}|< |B_{\widetilde{R}}|$. Then, by similar computations as before, by the Faber-Krahn inequality and the monotonocity of the first eigenvalue it holds
        \[ \lambda_0(B_{\widetilde{R}})+f_\eta(|\Omega_n|) \leq \lambda_0(\Omega_n)+f_\eta(|\Omega_n|) \leq E_{q,M,\eta}(\Omega_n)\leq E_1(B_1)+1.\]
        Since $f_\eta(|\Omega_n|)\geq -\eta|B_1|$,  $\lambda_0(B_{\widetilde{R}})=\frac{1}{\widetilde{R}^2} \lambda_0(B_1)$ and $\eta\in (0,1)$, we deduce
        \[ \widetilde{R}^2\geq \frac{\lambda_0(B_1)}{1+E_1(B_1)+|B_1|}\]
        which is a contradiction with the definition of $\widetilde{R}$. Then we can take $C=|B_{\widetilde{R}}|$ in the statement.
        
        We focus now on the connectedness. Let $\Omega\subseteq \R^3$ be a term of a minimizing sequence such that $C\leq|\Omega|\leq |B_2|$, $E_{q,M,\eta}(\Omega)\leq E_1(B_1)+|B_1|$ and made up of at most countably many connected components 
        \[ \Omega= \bigcup_{k\in \N}\Omega^k.\]
        For all $\vartheta\in (0,1)$, we consider a segment $S_{k}$ connecting the components $\Omega^k$ and $\Omega^{k+1}$ and we consider $T_{k,\vartheta}=\cup_{x\in S_k}B_{\zeta}(x)$, choosing $\zeta$ so that $|T_{k,\vartheta}|\leq \frac{\vartheta}{2^k}$.
        We call now \[ \Omega_\vartheta:=\bigcup_{k\in \N}T_{k,\vartheta}\cup \Omega\supset \Omega,\qquad {\widehat \Omega}_{\vartheta}:=\left(\frac{|\Omega|}{|\Omega_\vartheta|}\right)^{1/3}\Omega_\vartheta,\]
        and note that \[ |\Omega_\vartheta|\leq  |\Omega|+\vartheta,\qquad |\widehat{\Omega}_\vartheta|=|\Omega|.\]
        To simplify the notation, let us set $t=\left(\frac{|\Omega|}{|\Omega_\vartheta|}\right)^{1/3}\leq 1$. Let $u\in \H(\Omega_\vartheta)$ be an optimal function for $E_q(\Omega_\vartheta)$, then $v(x)=t^{-3/2}u(x/t)\in \H(\widehat{\Omega}_\vartheta)$. It holds, by a rescaling argument, that
        \begin{align*}
            E_q(\widehat{\Omega}_\vartheta)\leq E_q(v, \widehat{\Omega}_\vartheta)&=t^{-2}\int_{\Omega_\vartheta}|\nabla u|^2dx+t^{2}\frac{q}{2}\int_{\Omega_\vartheta}\int_{\Omega_\vartheta} \frac{u(x)u(y)}{|x-y|}dxdy\\ &\leq (1+C\vartheta)\int_{\Omega_\vartheta}|\nabla u|^2dx+\frac{q}{2}\int_{\Omega_\vartheta}\int_{\Omega_\vartheta} \frac{u(x)u(y)}{|x-y|}dxdy= E_q(\Omega_\vartheta)+C\vartheta \int_{\Omega_\vartheta}|\nabla u|^2dx
        \end{align*}
        for a universal constant $C$. Observe that by set inclusion
        \[ E_q(\Omega_{\vartheta})\leq \inf\{ E_q(u,\Omega_\vartheta):u\in \H(\Omega),\;\int_\Omega u^2\,dx=1\}=\inf\{ E_q(u,\Omega):u\in \H(\Omega),\;\int_\Omega u^2\,dx=1\}=E_q(\Omega)\]
        and so
        \[ \int_{\Omega_\vartheta}|\nabla u|^2dx \leq E_q(\Omega_\vartheta)\leq E_q(\Omega)\leq E_1(B_1)+1,\]
        thus $E_q(\widehat\Omega_\vartheta)\leq E_q(\Omega)+C\vartheta$. Since
        \[E_{q,M}(\Omega)-|B_1|\leq E_{q,M,\eta}(\Omega)\leq E_1(B_1)+|B_1|,\]
        by Proposition \ref{prop:equivalence E_q and E_q,M} (see also Remark \ref{rmk:M}),  it holds $E_q(\Omega)=E_{q,M}(\Omega)$.   Recalling $|\widehat\Omega_\vartheta|=|\Omega|$, it holds
        \[ E_{q,M,\eta}(\widehat\Omega_\vartheta)\leq E_{q,M,\eta}(\Omega)+C\vartheta.\]
     By arbitrariety of $\vartheta$, by applying this procedure to all of the elements of the minimizing sequence, it is not restrictive to assume that they are  connected.
\end{proof}

We show now how the surgery argument of Section~\ref{sec:surgery} can be extended also to this unconstrained functional.

\begin{theorem}\label{thm: equivalence unbounded and bounded problem}
    Let $q\in[0,q_1)$ and $\eta\in(0,\eta_1)$. Then there exists a universal constant $R$ such that
    \begin{equation*}
	\inf\{E_{q,M,\eta}(\Om):\Omega\subseteq \R^3,\, \text{quasi-open}\}=\inf\{E_{q,M,\eta}(\Om):\Omega\subseteq B_R,\, \text{quasi-open}\}. 
\end{equation*}
\end{theorem}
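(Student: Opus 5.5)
The inequality ``$\le$'' is trivial, so the plan is to show that for every $\theta>0$ there is a quasi-open set $\widehat\Omega\subseteq B_R$ with $E_{q,M,\eta}(\widehat\Omega)\le \inf E_{q,M,\eta}+\theta$, with $R$ universal. I would start from the good minimizing sequence of Lemma~\ref{le:good minizing sequence for surgery}: the sets $\Omega_n$ are connected, satisfy $C\le|\Omega_n|\le|B_2|$, and have energy at most $E_1(B_1)+|B_1|$. By Remark~\ref{rmk:M} and Proposition~\ref{prop:equivalence E_q and E_q,M} we then have $E_{q,M}(\Omega_n)=E_q(\Omega_n)$. Since Lemma~\ref{le:surgery} is stated for open sets, I would first replace each $\Omega_n$ by an open connected set with almost the same energy, up to an arbitrarily small error; this is standard, by approximation of quasi-open sets from outside in capacity, together with the connecting-tubes trick used in Lemma~\ref{le:good minizing sequence for surgery}.

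Next I would normalize the volume. Set $\rho_n=(|B_1|/|\Omega_n|)^{1/3}$, which lies in a universal compact subinterval of $(0,\infty)$, and consider $\rho_n\Omega_n$. By scaling, $E_q(\rho\Omega,\cdot)$ multiplies the gradient term by $\rho^{-2}$ and the Coulomb term by $\rho^{2}$, so $E_q(\rho_n\Omega_n)$ coincides with the energy $E_{q'}(\Omega_n)$ of a rescaled charge $q'=q\rho_n^{4}$ multiplied by $\rho_n^{-2}$. I would prefer to apply the surgery directly at volume $|\Omega_n|$: Lemma~\ref{le:surgery} is essentially scale-invariant, provided one checks two things for the rescaled set. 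First, the smallness $E_q(\rho_n\Omega_n)-\lambda_0(B_1)\le\overline\delta$. Second, $q'\le q_1$, which holds after possibly shrinking $q_1$ by a universal factor.

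The main obstacle is this smallness hypothesis, because a generic element of the minimizing sequence need not be close to a ball. My plan is a dichotomy. If $E_q(\rho_n\Omega_n)-\lambda_0(B_1)>\overline\delta$, then $E_{q,M,\eta}(\Omega_n)$ is at least $\rho_n^{2}(\lambda_0(B_1)+\overline\delta)+f_\eta(|\Omega_n|)$. Comparing with the competitor given by the ball $B_r$ of the same volume, and using the bound $q D(u,u)\le Cq$ from Theorem~\ref{thm: pde and L infty estimate}, the ball has energy at most $\rho_n^{2}\lambda_0(B_1)+Cq+f_\eta(|\Omega_n|)$. This is strictly smaller as soon as $Cq<\rho_n^2\overline\delta$, and if necessary I would reduce $q_1$ to guarantee it. In that case I replace $\Omega_n$ by the ball, which obviously lies in $B_R$ for $R\ge2$, and this gives a better element of the minimizing sequence. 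Otherwise the hypothesis of Lemma~\ref{le:surgery} holds for $\rho_n\Omega_n$. The lemma then yields a set $\widehat\Omega_n$ of volume $|B_1|$, diameter at most $D$ and $E_q(\widehat\Omega_n)\le E_q(\rho_n\Omega_n)$. Scaling back by $\rho_n^{-1}$ gives a set with the same volume as $\Omega_n$, diameter at most $D\rho_n^{-1}\le D'$, and $E_q$ no larger, where I would track the rescaled charge carefully so that monotonicity is preserved.

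Finally, since $f_\eta$ depends only on the volume, which is unchanged, and $E_{q,M}=E_q$ for these sets by Remark~\ref{rmk:M}, we get $E_{q,M,\eta}$ no larger than before. A translation then puts the set inside $B_R$ with $R=\max\{D',2\}$, which is universal, and this concludes the argument. The delicate point I anticipate is the exact bookkeeping of $q$ under rescaling inside Lemma~\ref{le:surgery}. If it fails, I would instead re-run Lemma~\ref{threeconditions} directly at volume $|\Omega_n|$, with the constants depending on the universal bounds $C\le|\Omega_n|\le|B_2|$.
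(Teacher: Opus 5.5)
Your proposal is correct and follows the paper's route. You take the good minimizing sequence of Lemma~\ref{le:good minizing sequence for surgery} and apply the surgery Lemma~\ref{le:surgery} to obtain sets of the same volume, universally bounded diameter and no larger $E_q$. Their $E_{q,M,\eta}$ is then no larger, since $f_\eta$ only sees the volume, and you translate them into $B_R$. Where the paper simply asserts that the surgery constants depend only on the volume bounds, you make explicit three things it leaves implicit: the reduction to volume $|B_1|$, the passage to open sets, and the closeness hypothesis $E_q-\lambda_0(B_1)\le\overline\delta$, which you handle by comparison with the ball of equal volume.

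The one slip is the charge bookkeeping you yourself flagged. Since $E_q(\rho^{-1}A)=\rho^{2}E_{q\rho^{-4}}(A)$, Lemma~\ref{le:surgery} must be applied to $\rho_n\Omega_n$ with charge $q\rho_n^{-4}$, not $q$ or $q\rho_n^{4}$. This charge is at most a universal multiple of $q$, hence admissible after shrinking $q_1$. The dichotomy should correspondingly be run on $\rho_n^{-2}E_q(\Omega_n)-\lambda_0(B_1)$. Scaling back then gives exactly $E_q(\rho_n^{-1}\widehat\Omega_n)\le E_q(\Omega_n)$. By contrast, applying the lemma at charge $q$ and invoking monotonicity in $q$ only yields this up to an $O(q)$ error, which would not give equality of the infima.
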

	\begin{proof}[Proof of Theorem~\ref{thm: equivalence unbounded and bounded problem}]
    It is trivial that  \[\inf\{E_{q,M,\eta}(\Om):\Omega\subseteq \R^3,\, \text{quasi-open}\}\leq\inf\{E_{q,M,\eta}(\Om):\Omega\subseteq B_R,\, \text{quasi-open}\}.\] 
    By Lemma \ref{le:good minizing sequence for surgery} there exists a minimizing sequence for problem \eqref{eq:def of inf E_{q,M,eta}(Omega,u) global} made of connected sets with measure uniformly bounded from above and below, thus all the constants in the surgery argument (Lemma~\ref{le:surgery}) will depend only on these universal bounds and not on the measure of the optimal unconstrained set. Thus by Lemma \ref{le:surgery}, we can construct another minimizing sequence made of sets with uniformly bounded diameter $D$ and with the same measure. Observe that the term $f_\eta$ does not interfere since the new sets have the same volume of the corresponding ones. Taking $R=\max\{2D,10\}$, this is a minimizing sequence also for the same problem but in the box $B_R$. 
	\end{proof}

\begin{remark}\label{rem: R=2D}
Observe that the radius $R$ in the proof of Theorem \ref{thm: equivalence unbounded and bounded problem} has been taken as $R=\max\{2D,10\}$, where $D$ comes from \ref{le:surgery}. Hence, hereafter we are allowed to  consider $R$ to be fixed.    
\end{remark}

\subsection{Existence and some properties}\label{sec:firstproperties}
Due to the surgery result, we can now restrict us to work in a fixed box, a ball of radius $R$ (see Remark \ref{rem: R=2D}). Let us now consider the problem 
\begin{equation}\label{eq:min mathcal E q,M,eta}
	\inf\left\{\Ecal_{q,M,\eta}(u):u\in \H(B_R)\right\},
\end{equation}
where
\[ \Ecal_{q,M,\eta}(u)=\int_{B_R} |\nabla u(x)|^2\, dx+\frac{q}{2}\int_{B_R} \int_{B_R} \frac{u(x)u(y)}{|x-y|}\, dxdy+ M\left|\int_{B_R}u^2dx-1\right|+f_\eta(|\{u\ne 0\}|).\]
Observe that problem~\eqref{eq:min mathcal E q,M,eta} is equivalent to 
\begin{equation}\label{eq: R q,M,eta in B_R}
    \inf\{E_{q,M,\eta}(\Om):\Omega\subseteq B_R,\, \text{quasi-open}\}
\end{equation}

Indeed let $\{u_n\}_n$ be a minimizing sequence for problem~\eqref{eq:min mathcal E q,M,eta}. Then $\{u_n\ne0\}$ is a valid candidate in \eqref{eq: R q,M,eta in B_R}. Thus \[	\inf_{\substack{\Omega\subseteq B_R\\\text{quasi-open}}}\inf_{u\in \H(\Om)} E_{q,M,\eta}(\Om,u) \leq \mathcal{E}_{q,M,\eta}(u_n),\]
and by passing to the limit we obtain
\[	\inf_{\substack{\Omega\subseteq B_R\\\text{quasi-open}}}\inf_{u\in \H(\Om)} E_{q,M,\eta}(\Om,u) \leq \inf_{u\in \H(B_R)} \mathcal{E}_{q,M,\eta}(u),\]
Suppose by contradiction that there exists a quasi-open set $\widetilde{\Omega}\subseteq B_R$ for which the inequality is strict, namely $E_{q,M,\eta}(\widetilde \Omega)<\inf_{u\in \H(B_R)} \mathcal{E}_{q,M,\eta}(u)$.  Then there exists an optimal $\widetilde{u}\in \H(\widetilde{\Omega})$ for $E_{q,M}(\widetilde{\Omega})$. Since $\{\widetilde{u}\ne 0\}\subseteq \widetilde\Omega$ and $f_\eta$ is increasing, then we reach a contradiction by
\[ \mathcal{E}_{q,M,\eta}(\tilde{u})\leq E_{q,M}(\tilde{u},\widetilde\Omega)+f_\eta(|\widetilde\Omega|)< \inf_{u\in \H(B_R)} \mathcal{E}_{q,M,\eta}(u).\]

Since we are now in an equibounded setting, we can address the existence of an optimizer for problem \eqref{eq:min mathcal E q,M,eta}. We also show that (its support) has finite perimeter, and this proof, although inspired by the one proposed first in~\cite{Bucur}, needs to take care of the sing-changing nature of the functions involved in this problem.
\begin{theorem}\label{thm:existenceEq,M,eta}
	    Let $\eta\in(0,\eta_1)$ and $q\in [0,1)$.
	Then there exists a minimizer for problem~\eqref{eq:min mathcal E q,M,eta}.
	Moreover for all minimizers $u\in \H(B_R)$, the quasi-open set $\left\{u\ne 0\right\}$ has perimeter\footnote{Here we mean that $\left\{u\ne 0\right\}$ is a set of finite perimeter in the distributional sense. See \cite{Maggibook} for details. } uniformly bounded by a constant depending on $\eta$.
\end{theorem}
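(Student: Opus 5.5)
Existence will follow from the direct method in $\H(B_R)$. Take a minimizing sequence $(u_n)_n$. Since $D(u_n,u_n)\ge 0$ by Lemma~\ref{le:positivityD(u,u)}, and since $f_\eta\ge -\eta|B_1|$ and the infimum is at most $E_1(B_1)+\eta|B_1|$ (take $u$ the first eigenfunction of $B_1$), the Dirichlet integrals are uniformly bounded. Up to a subsequence, $u_n\rightharpoonup u$ weakly in $H^1_0(B_R)$ and strongly in $L^2(B_R)$ by Rellich, since $B_R$ is bounded; we may also assume $u_n\to u$ a.e. The terms then behave as follows:
\begin{itemize}
\item the gradient term is weakly lower semicontinuous;
\item $D(u_n,u_n)\to D(u,u)$ by Proposition~\ref{prop:continuity of D(u,u)};
\item $M|\int u_n^2-1|$ converges;
\item for the measure term, a.e. convergence gives $\mathds 1_{\{u\ne0\}}\le\liminf_n \mathds 1_{\{u_n\ne0\}}$, so Fatou yields $|\{u\ne0\}|\le\liminf_n|\{u_n\ne 0\}|$. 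Since $f_\eta$ is increasing and continuous, $f_\eta(|\{u\neq0\}|)\le\liminf_n f_\eta(|\{u_n\ne0\}|)$.
\end{itemize}
Hence $u$ is a minimizer.

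For the perimeter bound I would follow Bucur's argument, using competitors that truncate the support at small levels. Since $u$ may change sign, I would work with $|u|$, or with $u^+$ and $u^-$ separately. For $\eps>0$ define $u_\eps:=\mathrm{sign}(u)(|u|-\eps)^+$, so that $\{u_\eps\ne0\}=\{|u|>\eps\}$. Minimality gives
\[
\int_{\{0<|u|\le\eps\}}|\nabla u|^2+\eta\,|\{0<|u|\le\eps\}|\le \frac q2\big(D(u_\eps,u_\eps)-D(u,u)\big)+M\Big(\Big|\int u_\eps^2-1\Big|-\Big|\int u^2-1\Big|\Big),
\]
where the lower bound $\eta$ on the increments of $f_\eta$ comes from \eqref{eq:prop feta}. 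I would bound the right-hand side by $C\int|u-u_\eps|\le C\eps|\{u\ne0\}|$. For the $M$ term this uses $\|u\|_{L^\infty}\le C$. For the Coulomb term I would write $D(u_\eps,u_\eps)-D(u,u)=D(u_\eps-u,u_\eps+u)$ and use $\|v_{|u|}\|_\infty\le C$, which follows from Hardy--Sobolev \eqref{eq:Hardy-Sobolev}, or from Theorem~\ref{thm: pde and L infty estimate} once one notes that $u/\|u\|_{L^2}$ is optimal for $E_q(\{u\ne0\})$. Combining this with the Cauchy--Schwarz / coarea inequality
\[
\Big(\int_{\{0<|u|\le\eps\}}|\nabla u|\Big)^2\le |\{0<|u|\le\eps\}|\int_{\{0<|u|\le\eps\}}|\nabla u|^2,
\]
we get
\[
\frac1\eps\int_0^\eps P(\{|u|>s\})\,ds=\frac1\eps\int_{\{0<|u|\le\eps\}}|\nabla u|\le C(\eta)\,|B_R|.
\]
So there is a sequence $s_k\to0$ along which $P(\{|u|>s_k\})\le C(\eta)$. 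Since $\{|u|>s_k\}\to\{u\ne0\}$ in $L^1$, lower semicontinuity of the perimeter gives the uniform bound for $\{u\ne0\}$.

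The main obstacle is the sign issue in the Coulomb term. Truncation of a sign-changing $u$ does not decrease $D$ monotonically, so one cannot discard it as in the positive case; one must show it only costs $O(\eps\,|\{0<|u|\le\eps\}|)$. This needs the $L^\infty$ bounds on $u$ and $v_u$ to be uniform, which in turn requires that the minimizer's energy stays below $E_1(B_1)+|B_1|$ so that Remark~\ref{rmk:M} and Theorem~\ref{thm: pde and L infty estimate} apply. A secondary subtlety is that the truncation cost must be compared with $\eta|\{0<|u|\le\eps\}|$ rather than with $\eps|\{u\ne0\}|$. To handle this I would integrate the linear estimate $|u-u_\eps|\le\eps\,\mathds 1_{\{u\neq0\}}$ more carefully: $|u-u_\eps|\le|u|$ on $\{0<|u|\le\eps\}$ and equals $\eps$ elsewhere. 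This gives a bound of the form
\[
\int_{\{0<|u|\le\eps\}}|\nabla u|^2+\eta|\{0<|u|\le\eps\}|\le C\eps,
\]
and the Cauchy--Schwarz step then yields $\frac1\eps\int_{\{0<|u|\le\eps\}}|\nabla u|\le C\eta^{-1/2}$, which is enough.
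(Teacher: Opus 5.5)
Your proposal is correct and follows essentially the paper's proof: the direct method with Fatou for the support term, then testing minimality against the truncation $u_\eps=(u-\eps)_+-(u+\eps)_-$ (your $\mathrm{sign}(u)(|u|-\eps)^+$), bounding the $M$- and Coulomb-differences by $C\eps$ via Hardy--Sobolev, and concluding with Cauchy--Schwarz, the coarea formula and lower semicontinuity of the perimeter. The differences are cosmetic: you apply coarea to $|u|$ directly instead of treating $\{u>0\}$ and $\{u<0\}$ separately, and you write the Coulomb difference as $D(u_\eps-u,u_\eps+u)$. Also, the $L^\infty$ bound on $u$ you invoke for the $M$-term is not needed, since $\int(u^2-u_\eps^2)\le 2\eps\int|u|$ already suffices.
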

\begin{proof}
	Let $(u_n)_n$ be a sequence such that
	\begin{equation}\label{eq:minimizingsequenceEqMeta}
		\Ecal_{q,M,\eta}(u_n)\leq 	\inf\left\{\Ecal_{q,M,\eta}(u):u\in \H(B_R)\right\} + \frac1n.
	\end{equation}
	By the nonnegativity of the terms in $\Ecal_{q,M,\eta}(u_n)$, $(u_n)_n$ is bounded in $\H(B_R)$ and up to a subsequence there exists $u\in \H(B_R)$ such that
	\[ u_n\rightharpoonup u \text{ in }\H(B_R),\quad u_n\to u \text{ in }L^2(B_R),\quad u_n\to u \text{ pointwise a.e. in } B_R. \]
	This implies that \[ \chi_{\left\{u\ne 0\right\}}(x)\leq  \liminf_{n\to+\infty}\chi_{\left\{u_n\ne 0\right\}}(x) \quad \text{ for a.e. } x\in B_R, \]
	so that by Fatou's Lemma there holds 
\begin{equation}\label{eq:lsc measure}
 |\{u\ne 0\}|\leq  \liminf_{n\to+\infty}|\{u_n\ne 0\}|.
\end{equation}
If $|\{u\ne 0\}|<|B_1|$, then \[f_\eta(|\{u\ne 0\}|)= \eta (|\{u\ne 0\}|-|B_1|)\leq\liminf_{n\to+\infty} \eta (|\{u_n\ne 0\}|-|B_1|)\leq \liminf_{n\to+\infty} f_\eta(|\{u_n\ne 0\}|).\]
On the other hand, if $|\{u\ne 0\}|>|B_1|$, then by \eqref{eq:lsc measure} $|\{u_n\ne 0\}|\geq |B_1|$ for $n$ large enough and so
\[f_\eta(|\{u\ne 0\}|)= \frac{1}{\eta} (|\{u\ne 0\}|-|B_1|)\leq \lim_{n\to+\infty} f_\eta(|\{u_n\ne 0\}|).\]
Finally if $|\{u\ne 0\}|=|B_1|$, \[ f_\eta(|\{u\ne 0\}|)=0\leq \liminf_{n\to+\infty} (|\{u_n\ne 0\}|-|B_1|)\leq \liminf_{n\to+\infty} f_\eta (|\{u_n\ne 0\}|).\]
So
\begin{equation}\label{eq:lsc f_eta}
 f_\eta(|\{u\ne 0\}|)\leq \liminf f_\eta(|\{u_n\ne 0\}|).
\end{equation}
Thus by lower semicontinuity with respect to the weak convergence, Proposition \ref{prop:continuity of D(u,u)} and \eqref{eq:lsc f_eta}, $u$ is a minimum of $\Ecal_{q,M,\eta}$. 
Let us show that $\{u\ne 0\}$ has finite perimeter. Define $u_\eps:= (u-\eps)_+-(u+\eps)_-$ for $\eps>0$ small. Since $u$ is a minimum, it holds $\Ecal_{q,M,\eta}(u)\leq  \Ecal_{q,M,\eta}(u_\eps)$, which implies (using also~\eqref{eq:prop feta})
\begin{equation}\label{eq:functional with u_eps}
\int_{\{-\eps<u<\eps\}}|\nabla u|^2\,dx+\eta|\{-\eps<u<\eps\}|\leq -\frac{q}{2}(D(u,u)-D(u_\eps,u_\eps))+ M \Big(\left|\int_{B_R} u_\eps^2\,dx-1\right|-\left|\int_{B_R} u^2\,dx-1\right|\Big).
\end{equation}

Let us analyze each term. Let us start with the right hand side:
\begin{align*}
\left|\int_{B_R} u_\eps^2dx-1\right|-\left|\int_{B_R} u^2dx-1\right|&\leq \left| \int_{\{u>\eps\}} (u-\varepsilon)^2dx+\int_{\{u<-\eps\}} (u+\varepsilon)^2dx -\int_{B_R}u^2dx\right|\\&\leq \left|\int_{\{-\eps<u<\eps\}}u^2 dx\right|+\eps^2|B_R|+4\eps|B_R|^{1/2}||u||_{L^2(B_R)}\\
&\leq 2|B_R|(\eps^2+2\eps||u||_{L^2(B_R)}).
\end{align*}
Then, using also the Hardy-Sobolev inequality \eqref{eq:Hardy-Sobolev}, it holds
\begin{align*}
D(u,u)-D(u_\eps,u_\eps)=&\int_{\{u\ne 0\}}\int_{\{u\ne 0\}}\frac{u(x)u(y)}{|x-y|}\, dxdy-\int_{\{u_\eps\ne 0\}}\int_{\{u_\eps\ne 0\}}\frac{u_\eps(x)u_\eps(y)}{|x-y|}\, dxdy\\
\geq& \int_{\{-\eps<u<\eps\}}\int_{\{-\eps<u<\eps\}}\frac{u(x)u(y)}{|x-y|}\, dxdy+2\int_{\{u<-\eps\}}\int_{\{-\eps<u<\eps\}}\frac{u(x)u(y)}{|x-y|}\, dxdy\\&+2\int_{\{u>\eps\}}\int_{\{-\eps<u<\eps\}}\frac{u(x)u(y)}{|x-y|}\, dxdy+2\int_{\{u<-\eps\}}\int_{\{u>\eps\}}\frac{u(x)u(y)}{|x-y|}\, dxdy\\&-2\int_{\{u<-\eps\}}\int_{\{u>\eps\}}\frac{(u(x)-\eps)(u(y)-\eps)}{|x-y|}\, dxdy
\\\geq &-C(R)\eps^2-C(R)||\nabla u||_{L^1(B_R)}\eps.
\end{align*}
Thus \eqref{eq:functional with u_eps} implies that
\begin{equation*}
\int_{\{-\eps<u<\eps\}}|\nabla u|^2dx+\eta|\{-\eps<u<\eps\}|\leq C(R)(1+||\nabla u||_{L^1(B_R)})\eps.
\end{equation*}
Furthermore using the Cauchy-Schwarz inequality we have
\[ \Big(\int_{\{0<u<\eps\}}|\nabla u|\,dx\Big)^2\leq |\{\{0<u<\eps\}\}|\Big(\int_{\{0<u<\eps\}}|\nabla u|^2\,dx\Big)\leq \frac1\eta (C(R)(1+||\nabla u||_{L^1(B_R)})\eps)^2.  \]
By Coarea formula
\[ \int_{0}^{\eps}P(\{u>s\})ds=\int_{\{0<u<\eps\}}|\nabla u|\, dx \leq \frac{1}{\sqrt{\eta}}C(R)(1+||\nabla u||_{L^1(B_R)})\eps. \]
Finally we find $\delta_n>0$ such that $\delta_n\to 0$ and
\[ P(\{u>\delta_n\})\leq \frac{1}{\sqrt{\eta}}C(R)(1+||\nabla u||_{L^1(B_R)}).\]
Passing to the limit
\[ P(\{u>0\})\leq  \frac{1}{\sqrt{\eta}}C(R)(1+||\nabla u||_{L^1(B_R)}).\]
Similar computations hold for $P(\{u<0\})$ while $P(\{u\ne 0\})$ is bounded as consequence of \[P(\{u\ne 0\})\leq P(\{u<0\})+P(\{u>0\}). \qedhere\]
\end{proof}
\begin{remark}\label{rem:optimal set does not touch the boundary}
In light of Theorem \ref{thm:existenceEq,M,eta}, Lemma \ref{le:surgery}, Theorem~\ref{thm: equivalence unbounded and bounded problem} and Remark \ref{rem: R=2D}, we can suppose there exists an optimal set $\Omega$ of problem \eqref{eq: R q,M,eta in B_R} and it is well separated from $\partial B_R$, in the sense that $\Omega\subseteq B_{R/2}\subseteq B_R$. Moreover any optimal set is connected, otherwise by moving the connected components apart, the functional decreases.
\end{remark}

	We can now show the equivalence between the constrained and the unconstrained problems. {We recall that the constant $M$ has been already fixed, see Remark~\ref{rmk:M}.}
	
	\begin{theorem}\label{thm:noconstraint}
        There exist universal constant $ q_2\in (0,q_1]$ and
          $\eta_2\in (0,\eta_1]$ such that, for all
          $\eta\in (0,\eta_2]$ and $q \in [0, q_2)$, we have that
		\begin{equation}\label{eq:nocontraint}
			\begin{aligned}
				\min\left\{E_{q,M,\eta}(\Om) : \Om\subset B_R \right\} {=}
				\inf\left\{E_q(\Om) : \Omega \subseteq\R^3,\; |\Om|=|B_1|\right\}.
			\end{aligned}
		\end{equation}
		As a consequence, problems \eqref{eq: R q,M,eta in B_R} and \eqref{eq: inf EqOmega} are equivalent {for these values of $q$ and $\eta$}.
	\end{theorem}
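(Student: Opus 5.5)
The plan is the Aguilera--Alt--Caffarelli argument: show that every optimal set $\Omega$ of \eqref{eq: R q,M,eta in B_R} satisfies $|\Omega|=|B_1|$ once $q$ and $\eta$ are small. From that, \eqref{eq:nocontraint} follows from the constraint-removal results already proved.

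\textbf{Easy inequality.} First take any quasi-open $\Omega\subseteq\R^3$ with $|\Omega|=|B_1|$ and $E_q(\Omega)\le E_1(B_1)$ (this is not restrictive). Then $f_\eta(|\Omega|)=0$, and by Remark~\ref{rmk:M} we have $E_{q,M}(\Omega)=E_q(\Omega)$. Hence
\[
\inf\{E_{q,M,\eta}\text{ over }\R^3\}\le \inf\{E_q(\Omega):|\Omega|=|B_1|\}.
\]
By Theorem~\ref{thm: equivalence unbounded and bounded problem}, the left-hand side equals the infimum over $\Omega\subseteq B_R$. By Theorem~\ref{thm:existenceEq,M,eta} and Remark~\ref{rem:optimal set does not touch the boundary}, that infimum is attained by some $\Omega$. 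This gives ``$\le$'' in \eqref{eq:nocontraint}.

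\textbf{Reverse inequality: setup.} Fix an optimal $\Omega$. By Lemma~\ref{le:good minizing sequence for surgery} (applied to the minimizer) we have $C\le|\Omega|\le|B_2|$ and $E_{q,M,\eta}(\Omega)\le E_1(B_1)+|B_1|$. By Proposition~\ref{prop:equivalence E_q and E_q,M}, the optimal $u$ is then $L^2$-normalized and $E_{q,M}(\Omega)=E_q(\Omega)$. For $t>0$, compare $\Omega$ with $t\Omega$ using $v(x)=t^{-3/2}u(x/t)$. This $v$ is still normalized, and as computed in the proof of Lemma~\ref{le:good minizing sequence for surgery},
\[
E_q(t\Omega)\le t^{-2}\int|\nabla u|^2+t^{2}\frac q2 D(u,u).
\]
Since $t\Omega$ may leave $B_R$, I compare inside the whole-space problem. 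This is legitimate because the two infima coincide by Theorem~\ref{thm: equivalence unbounded and bounded problem}.

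\textbf{Case $|\Omega|>|B_1|$.} Take $t=(|B_1|/|\Omega|)^{1/3}<1$. The energy increase is at most
\[
(t^{-2}-1)\,K\le C_K\,(|\Omega|-|B_1|),
\]
with $K=E_1(B_1)+1$ bounding $\int|\nabla u|^2$, using $|\Omega|\ge C$. The Coulomb term only decreases. The volume penalty drops by $\tfrac1\eta(|\Omega|-|B_1|)$. So for $\eta<1/C_K$ we contradict optimality.

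\textbf{Case $|\Omega|<|B_1|$.} Take $t=(|B_1|/|\Omega|)^{1/3}>1$. This case is the main point, and it is where smallness of $q$ enters. The gradient gain is
\[
(1-t^{-2})\int|\nabla u|^2\ge c\,(|B_1|-|\Omega|)\,\lambda_0(\Omega)\ge c'\,(|B_1|-|\Omega|),
\]
where $\lambda_0(\Omega)\ge\lambda_0(B_2)$ by Faber--Krahn since $|\Omega|\le|B_2|$. The Coulomb loss is $(t^2-1)\frac q2 D(u,u)$. Since $t$ is bounded (because $|\Omega|\ge C$), this loss is at most $qC''(|B_1|-|\Omega|)$. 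Here $D(u,u)\le C$ by the Hardy--Sobolev inequality \eqref{eq:Hardy-Sobolev} and the bound on $\int|\nabla u|^2$, or alternatively by the $L^\infty$ bounds of Theorem~\ref{thm: pde and L infty estimate}. The penalty increases by only $\eta(|B_1|-|\Omega|)$. Choosing $q_2$ and $\eta_2$ with $c'-qC''-\eta>0$ makes $t\Omega$ strictly better, which is a contradiction.

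\textbf{Conclusion.} Therefore $|\Omega|=|B_1|$, so $f_\eta(|\Omega|)=0$ and $E_{q,M,\eta}(\Omega)=E_q(\Omega)\ge\inf\{E_q:|\cdot|=|B_1|\}$. This proves \eqref{eq:nocontraint}, and optimal sets of the two problems coincide.

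The delicate point is keeping every constant universal. In particular, the lower bound on $\int|\nabla u|^2$ and the upper bound on $D(u,u)$ must be uniform over all possible minimizers, which is why the volume bounds of Lemma~\ref{le:good minizing sequence for surgery} are needed.
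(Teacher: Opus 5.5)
Your proposal is correct and follows essentially the same Aguilera--Alt--Caffarelli rescaling argument as the paper. You use the uniform volume bounds from Lemma~\ref{le:good minizing sequence for surgery}, shrink to volume $|B_1|$ when $|\Omega|>|B_1|$ (the $1/\eta$ penalty wins), and dilate when $|\Omega|<|B_1|$ (the Dirichlet gain beats the $\eta$ penalty and the $O(q)$ Coulomb loss); the only cosmetic difference is that in the second case the paper studies an auxiliary function $g(r)$ on $[1,\rho]$, whereas you directly compare the two endpoints, which is all that is needed.
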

    \begin{proof}
        It is easy to check that 	
      \[ 
			\min\left\{E_{q,M,\eta}(\Om) : \Om\subseteq \R^3\right\}
			\le\inf\left\{ E_q(\Om) : \Om\subseteq \R^3,\; |\Om|=|B_1|\right\}
			{=:\mu(q)},
		\]
		as the two functionals coincide on sets of measure $|B_1|$, thanks to
		the definition of $f_\eta$. So by Theorem \ref{thm: equivalence unbounded and bounded problem}, 
              \[ 
			\min\left\{E_{q,M,\eta}(\Om) : \Om\subseteq B_R\right\}
			\le\mu(q),
		\]
        Then, if the reverse inequality
		holds, it follows that on the set of minimizers (of the first or of
		the second problem) the two functionals coincide. 	We prove the claim of the theorem arguing by
		contradiction. Let 
		\[
		\Om_{{q,M,\eta}}\subset B_R,\quad \sigma_{{q,M,\eta}}\in\R,\quad |\Om_{{q,M,\eta}}|=|B_1|+\sigma_{{q,M,\eta}},\quad E_{q,M,\eta}(\Om_{{q,M,\eta}})<\mu(q),
		\]
		and we also note that, $\mu(q)\leq E_q(B_1)$, by definition of infimum.
		We moreover assume, without loss of generality, that $\Om_{{q,M,\eta}}$ are minimizers for problem~\eqref{eq: R q,M,eta in B_R}.
		We treat separately the case $\sigma_{{q,M,\eta}}>0$ and $\sigma_{{q,M,\eta}}<0$. Observe that repeating the computation of Lemma \ref{le:good minizing sequence for surgery} and up to taking $\eta_2$ small enough,  we can suppose that $|\Omega_{q,M,\eta}|$ is uniformly bounded from above and below, thus there exist universal constants $\tilde{C},\tilde c\in(0,1)$ such that $-\tilde c|B_1|\leq \sigma_{q,M,\eta}\leq \tilde C |B_1|$.
        
        {\bf Case $\sigma_{{q,M,\eta}}>0$. }
		Let now ${\rho_{{q,M,\eta}}}<1$ be such that
		$|{\rho_{{q,M,\eta}}}\Om_{{q,M,\eta}}|=|B_1|$, so that 
		\[
		{ {\rho_{{q,M,\eta}}}= 1
			-\frac{\sigma_{{q,M,\eta}}}{3|B_1|}+C\sigma_{{q,M,\eta}}^2,}
		\]
		for some {$C = C(\sigma_{q,M,\eta}) \in \R$ such   that $|C| \leq C_0$ for all  $|\sigma_{q,M,\eta}| <\tilde C |B_1|$ some $C_0 > 0$ universal}.
		
		We call $u=u_{q,M,\eta}$ an optimal normalized function attaining $E_q(\Omega_{q,M,\eta})$, thus the function\[
		\widetilde u(y)={\rho_{{q,M,\eta}}}^{-\frac{3}{2}}u\Big(\frac{y}{{\rho_{{q,M,\eta}}}}\Big),\qquad y\in {\rho_{{q,M,\eta}}}\Omega_{q,M,\eta},
		\]
		is an admissible  competitor with unitary $L^2-$norm for $E_q({\rho_{{q,M,\eta}}}\Om_{q,M,\eta})$.
		We  have the following scalings
		\begin{equation*}\label{eq:split}
	\begin{split}
	    			\int_{{\rho_{{q,M,\eta}}}\Omega_{q,M,\eta}} |\nabla \widetilde u(y)|^2\,dy&={\rho_{{q,M,\eta}}}^{-2}\int_{\Om_{q,M,\eta}}|\nabla u(x)|^2\,dx,\\
                		D(\widetilde u,\widetilde u)
				&={{\rho_{{q,M,\eta}}}^{2}}D(u,u).
	\end{split}    
		\end{equation*}

		Since the new set $\rho_{{q,M,\eta}}\Om_{{q,M,\eta}}$ is now admissible in the constrained minimization problem~\eqref{eq: inf EqOmega}, using the  above scaling we obtain
        	\[
		\begin{split}
			E_{q,M,\eta}(\Om_{q,M,\eta})&=E_q(\Om_{{q,M,\eta}})+\frac{\sigma_{{q,M,\eta}}}{\eta}
			<\mu\leq E_q({\rho_{{q,M,\eta}}}\Om_{{q,M,\eta}})\\
			&\leq \int_{{\rho_{{q,M,\eta}}}\Omega_{q,M,\eta}}|\nabla \widetilde u(y)|^2\,dy+\frac{q}{2}D(\widetilde u,\widetilde u)\\&={\rho_{{q,M,\eta}}}^{-2}\int_{\Om_{q,M,\eta}}|\nabla u(x)|^2\,dx+{\rho_{{q,M,\eta}}}^{2}\frac{q}{2} D(u,u)\\
			&=\int_{\Om_{q,M,\eta}}|\nabla u(x)|^2\,dx \left(1+{\frac{2\sigma_{{q,M,\eta}}}{3|B_1|}}+C\sigma_{{q,M,\eta}}^2\right)
			+\frac{q}{2}  D(u,u)\left(1-{\frac{2\sigma_{{q,M,\eta}}}{3|B_1|}}+C\sigma_{{q,M,\eta}}^2\right),
		\end{split}
		\]
		we deduce that (up to increasing $C$, recalling also that
		$E_q(\Omega_{q,M,\eta})\leq E_q(B_1)$)
		\[
		\begin{aligned}
			\frac{\sigma_{{q,M,\eta}}}{\eta}&<\int_{\Om_{q,M,\eta}}|\nabla u(x)|^2\,dx \left({\frac{2\sigma_{{q,M,\eta}}}{3|B_1|}}\right)-\frac{q}{2}  D(u,u)\left({\frac{2\sigma_{{q,M,\eta}}}{3|B_1|}}\right)+ 2E_q(B_1)C\sigma_{{q,M,\eta}}^2\\
			&\le {\frac{\sigma_{q,M,\eta}}{3|B_1|}} 2 E_q(\Omega_{q,M,\eta})+C\sigma_{{q,M,\eta}}^2.
		\end{aligned}
		\]
		Thus, for some universal $C>0$,
		\[
		\frac{1}{\eta}\le C E_q(\Omega_{q,M,\eta})+C\sigma \le C E_q(B_1)\leq C E_1(B_1),
		\]
		which leads to a contradiction as soon as $\eta_2<\frac{1}{C \,E_1(B_1)}.$

        {\bf Case $\sigma_{{q,M,\eta}}<0$. }  Let now ${\rho_{{q,M,\eta}}}>1$ be such that
		$|{\rho_{{q,M,\eta}}}\Om_{{q,M,\eta}}|=|B_1|$ and consider the function $g\colon[1,\rho_{{q,M,\eta}}]\rightarrow \R$ defined by
        \[
		 g(r)=\int_{r\Om_{q,M,\eta}}|\nabla  u_r|^2\, dx+\frac{q}{2} \int_{r\Om_{{q,M,\eta}}}\int_{r\Omega_{q,M,\eta}}\frac{ u_r(x) u_r(y)}{|x-y|}\,dxdy+\eta(r^3|\Om_{{q,M,\eta}}|-|B_1|),
		\]
        where
\[
		 u_r(y)={r}^{-\frac{3}{2}}u\Big(\frac{y}{r}\Big),\qquad y\in r\Omega_{q,M,\eta},
		\]We show that the minimum of the function $g$ is attained at $r=\rho:=\rho_{{q,M,\eta}}$. 
Then the proof is concluded by Proposition \ref{prop:equivalence E_q and E_q,M} because this implies that $E_q(\rho_{q,M,\eta}\Omega_{q,M,\eta})=E_{q,M,\eta}(\rho_{q,M,\eta}\Omega_{q,M,\eta})\leq E_{q,M,\eta}(\Omega_{q,M,\eta})<\mu(q)$ which leads to a contradiction.
Proving that $g$ has a minimum at $r=\rho$ is equivalent to show that for some $\eta$ the inequality  
		\[
		g(r)\geq \int_{\rho\Om_{q,M,\eta}}|\nabla \widetilde u|^2\, dx+\frac{q}{2} \int_{\rho\Om_{{q,M,\eta}}}\int_{\rho\Omega_{q,M,\eta}}\frac{\widetilde u(x)\widetilde u(y)}{|x-y|}\,dxdy,\qquad \text{for all }r\in[1,\rho],
		\]
		holds true, where $\widetilde u$ is defined as before. Up to rearranging the terms, and by the rescaling of the involved integrals, such an inequality reads as
		\[
		\eta\left(1-\left(\frac{r}{\rho}\right)^3\right)\le \int_{\rho\Om_{q,M,\eta}}|\nabla \widetilde u|^2\left(\left(\frac{r}{\rho}\right)^{-2}-1\right)+\frac{q}{2} D(\widetilde u,\widetilde u)\left(\left(\frac{r}{\rho}\right)^{2}-1\right).
		\]
		Setting $t:=\frac r\rho < 1$, and observing that $r^3|\Om_{{q,M,\eta}}|=t^3$, the last inequality is equivalent to
		\[
		\eta\le \frac{\int_{\rho\Om_{q,M,\eta}}|\nabla \widetilde u|^2\, dx(t^{-2}-1)-\frac{q}{2} D(\widetilde u,\widetilde u)(1-t^2)}{1-t^3}.
		\]
        Moreover by scaling and $|\sigma_{q,M,\eta}|\geq -\tilde c |B_1|$, it holds
        \[
       		D(\widetilde u,\widetilde u)={{\rho_{{q,M,\eta}}}^{2}}D(u,u)\leq (1-\tilde c)^{-2/3}E_1(B_1)=\overline{C}
        \]
        where $\overline{C}$ is a universal constant.
It is easy to check that the right hand side is bounded from below by the function \[
		t\mapsto\frac{\lambda_0(B_2)(t^{-2}-1)-\frac{q}{2}\overline{C}(1-t^2)}{1-t^3},\qquad t\in(0,1),
		\]
		which is a function strictly decreasing in $(0,1)$ for $q\leq 2/3\overline{C}$ and with infimum given by \[
		\lim_{t\to 1^-}\frac{\lambda_0(B_2)(t^{-2}-1)-q\overline{C}(1-t^2)}{1-t^3}=\frac{2\lambda_0(B_2)-q\overline{C}}{3}>0,
		\]
		as $q\leq q_2$ where $q_2=\frac{2}{\overline{C}}\min\{ \lambda_0(B_2),1/3\}.$
		Thus it is enough to take $\eta\leq \eta_2\leq \frac{2\lambda_0(B_2)-q_2\overline{C}}{3}$ and we immediately deduce that $g$ has minimum for $r=\rho$.
		This concludes the proof.
    \end{proof}

    \begin{remark}\label{rmk:etafissato}
       { Hereafter we fix $\eta=\frac{\eta_2}{2}$. In light of Theorem \ref{thm:noconstraint}, we know that all optimizers $\Omega$ of problem \eqref{eq: R q,M,eta in B_R} satisfy $|\Omega|=|B_1|$.
       }
    \end{remark}

\subsection{Non-negativity of the optimal function}
Before delving into the regularity of the optimal sets, it is crucial to determine whether the optimal function provided by Theorem \ref{thm:existenceEq,M,eta} is nonnegative or sign-changing. This distinction is fundamental: if the function is nonnegative, the problem can be treated using one-phase free boundary techniques; otherwise, a two-phase free boundary argument is required. The idea is to obtain the nonnegativity of our optimal function by testing its minimality against its positive part. To do so, it is useful to know some other information about the optimal function, such as some convergences as $q$ goes to zero.

\begin{proposition}\label{prop: Gamma convergence} Let $q\in(0,q_2)$. Then  $\Ecal_{q,M,\eta}$ $\Gamma$-converges to $\Ecal_{0,M,\eta}$ as $q\to 0$ with respect to the weak topology of $\H(B_R)$ and 
	\begin{equation*}
		\inf\left\{\Ecal_{q,M,\eta}(u):u\in \H(B_R)\right\}\to \lambda_0(B_1).
	\end{equation*}
 Moreover for all  $(u_q)_q\subseteq \H(B_R)$ such that each $u_q$ minimizes $\Ecal_{q,M,\eta}$, then every weak limit of a subsequence in $\H(B_R)$ is either $u_{B_1}$ or $-u_{B_1}$, denoting $u_{B_1}$ is the first positive normalized eigenfunction of the Dirichlet Laplacian on $B_1$.
 \end{proposition}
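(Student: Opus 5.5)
The plan is to prove the $\Gamma$-convergence directly and then run the standard argument that minimizers converge to minimizers of the limit functional. The limit problem is identified by means of the Faber--Krahn inequality.

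\emph{$\Gamma$-convergence.} For the $\Gamma$-liminf inequality, let $u_q\rightharpoonup u$ weakly in $\H(B_R)$. The Dirichlet term is weakly lower semicontinuous, the $L^2$ term converges because of the compact embedding, and the measure term is lower semicontinuous exactly as in \eqref{eq:lsc f_eta} in the proof of Theorem~\ref{thm:existenceEq,M,eta}. The Coulomb term is nonnegative by Lemma~\ref{le:positivityD(u,u)}, so it may simply be dropped; alternatively, by Proposition~\ref{prop:continuity of D(u,u)} it is bounded along the sequence and is therefore multiplied by $q\to0$. Either way
\[
\liminf_{q\to0}\Ecal_{q,M,\eta}(u_q)\geq \Ecal_{0,M,\eta}(u).
\]
For the $\Gamma$-limsup inequality we take the constant recovery sequence $u_q=u$. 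Then
\[
\Ecal_{q,M,\eta}(u)=\Ecal_{0,M,\eta}(u)+\tfrac q2 D(u,u)\to \Ecal_{0,M,\eta}(u),
\]
since $D(u,u)<\infty$ for $u\in\H(B_R)$.

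\emph{Equicoercivity and convergence of minima.} Minima satisfy $\Ecal_{q,M,\eta}(u_q)\le \Ecal_{q,M,\eta}(u_{B_1})\le E_1(B_1)$. Together with the nonnegativity of all terms and $f_\eta\ge-\eta|B_1|$, this bounds $u_q$ in $\H(B_R)$. The fundamental theorem of $\Gamma$-convergence then gives that minima converge to $\min\Ecal_{0,M,\eta}$ and that weak cluster points of minimizers minimize $\Ecal_{0,M,\eta}$. It remains to identify the limit problem. Applying Proposition~\ref{prop:equivalence E_q and E_q,M} with $q=0$ (the choice of $M$ in Remark~\ref{rmk:M} is admissible), the $L^2$ penalization can be replaced by the constraint $\|u\|_{L^2}=1$. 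We then apply Theorem~\ref{thm:noconstraint} with $q=0$: its proof works for $q\in[0,q_2)$, and $B_1\subset B_R$. This reduces $\min\Ecal_{0,M,\eta}$ to $\min\{\lambda_0(\Om):|\Om|=|B_1|\}$. By Faber--Krahn (Theorem~\ref{thm:quantitativefk}) this minimum equals $\lambda_0(B_1)$.

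\emph{Identifying the limits.} Let $u$ be a weak limit of minimizers. Then $u$ minimizes $\Ecal_{0,M,\eta}$. By the equivalences above, $\|u\|_{L^2}=1$ and $|\{u\neq0\}|=|B_1|$. Also $\lambda_0(\{u\ne0\})\le\int|\nabla u|^2=\lambda_0(B_1)$, so equality holds in Faber--Krahn. The quantitative version gives $\mathcal A(\{u\ne0\})=0$, so $\{u\neq 0\}$ is a ball of radius $1$ up to null capacity. Hence $u$ is a normalized first Dirichlet eigenfunction of a unit ball, and by simplicity of the first eigenvalue $u=\pm u_{B_1}$ up to translation. The position of the ball is the one fixed by the normalization chosen in the statement (for instance the centering used in Lemma~\ref{le:surgery}); the statement should be read modulo translations.

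\emph{Main obstacle.} The delicate step is the bookkeeping of constraints in the limit. One has to check carefully that Proposition~\ref{prop:equivalence E_q and E_q,M} and Theorem~\ref{thm:noconstraint} indeed apply at $q=0$ and that no strict inequality is lost. Concretely, if $\|u\|_{L^2}<1$ or $|\{u\neq0\}|\neq|B_1|$ for the weak limit, we argue by contradiction: rescaling $u$ as in the proof of Theorem~\ref{thm:noconstraint} would strictly lower $\Ecal_{0,M,\eta}$, contradicting minimality.
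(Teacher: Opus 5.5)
Your proposal is correct and follows essentially the same route as the paper. The ingredients match: the $\Gamma$-liminf from lower semicontinuity of each term (the paper uses the $L^2$-continuity of $D$ from Proposition~\ref{prop:continuity of D(u,u)}, while your observation that the nonnegative Coulomb term can simply be dropped works equally well), the constant recovery sequence, equicoercivity from the uniform energy bound, and $\min\Ecal_{0,M,\eta}=\lambda_0(B_1)$ via Theorem~\ref{thm:noconstraint} at $q=0$ and Faber--Krahn.

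Your identification of the limit minimizers through the equality case of Faber--Krahn makes precise a step the paper only asserts. So does your remark that $\Ecal_{0,M,\eta}$ is invariant under translations keeping the support in $B_R$, so the limits are $\pm u_{B_1}$ only up to translation. The only slip is the bound $\Ecal_{q,M,\eta}(u_{B_1})\le E_1(B_1)$, which need not hold for $q$ near $1$. It is harmless, since any $q$-independent bound such as $\lambda_0(B_1)+\tfrac12 D(u_{B_1},u_{B_1})$ suffices.
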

	\begin{proof}
		Let $(v_q)_q\subseteq \H(B_R)$ be a sequence weakly converging to $v\in \H(B_R)$ in $\H(B_R)$ as $q\to 0$. Then $(v_q)_q$ is bounded in $\H(B_R)$ and so, up to a subsequence, $(v_q)_q$ converges pointwise and in $L^2(B_R)$ to $v$ as $q\to 0$. Since $|v\ne 0|\leq \liminf\limits_{q\to 0}|\left\{v_q\ne 0\right\}|$ and by Proposition \ref{prop:continuity of D(u,u)}, it holds
		\[ \Ecal_{0,M,\eta}(v)\leq \liminf_{q\to 0}\Ecal_{q,M,\eta}(v_q),\]
		so the \textit{liminf inequality} holds. Instead the \textit{limsup inequality} holds by taking the sequence $v_q=v$ for all $q\in(0,1)$. Thus  $\Ecal_{q,M,\eta}$ $\Gamma$-converges to $\Ecal_{0,M,\eta}$ as $q\to 0$ with respect to the weak $\H(B_R)$ topology. By Theorem \ref{thm:existenceEq,M,eta}, for all $q\in(0,1)$, there exists $u_q\in H^1_0(B_R)$ such that 
		\[ 	\inf\left\{\Ecal_{q,M,\eta}(u):u\in \H(B_R)\right\} = \mathcal{E}_{q,M,\eta}(u_q).\]
		Since for all $q\in(0,1)$ it holds (recalling also~Lemma~\ref{le:positivityD(u,u)} and the fact that $|\{u_q\not =0\}|=|B_1|$)
		\[ 
        \int_{B_R}|\nabla u_q|^2dx\leq \mathcal{E}_{q,M,\eta}(u_q)\leq E_1(B_1)+|B_1|,
        \]
		so $(u_q)_q$ is bounded in $\H(B_R)$ and by reflexivity of $\H(B_R)$, there exists a subsequence which weakly converging in $\H(B_R)$. Then by a well-known property of the $\Gamma$-convergence (see for example~\cite[Chapter 7]{DalMaso}), the minimum and the minimizers of $\Ecal_{q,M,\eta}$ converge respectively to the minimum and the minimizers of $\Ecal_{0,M,\eta}$, respectively. 
        By Theorem \ref{thm:noconstraint} and the Faber-Krahn inequality,\[\inf\limits_{u\in\H(B_R)}\mathcal{E}_{0,M,\eta}(u)=\lambda_0(B_1).\] Since $u_{B_1}$ and $-u_{B_1}$ are the only minimizers of $\Ecal_{0,M,\eta}$ on $\H(B_R)$, where $u_{B_1}$ is the first positive normalized eigenfunction of the Dirichlet Laplacian, the proof is concluded.
	\end{proof}

Despite Proposition \ref{prop: Gamma convergence} gives a lot of information about some convergences of the optimal function, we need a stronger (uniform) one. In the following proposition we prove uniform convergence of minimizers as a consequence of the following regularity result, inspired by the regularity theory introduced by Giaquinta and Giusti in \cite{GiaquintaGiusti}. 
\begin{proposition}\label{prop: L^infty convergence}
 Let $q\in(0,q_2)$. Then for all $(u_q)_q\subseteq \H(B_R)$ such that $u_q$ minimizes $\Ecal_{q,M,\eta}$, up to possibly taking $-u_q$ for some $q$, $(u_q)_q$ uniformly converges to $u_{B_1}$ in $B_R$ as $q\to 0$.
 \end{proposition}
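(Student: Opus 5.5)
The plan is to upgrade the weak $\H(B_R)$ convergence of Proposition~\ref{prop: Gamma convergence} to uniform convergence. The upgrade has two ingredients: strong $H^1$ convergence plus an $L^2$ rate, and a uniform Hölder modulus for the family $(u_q)_q$. Arzelà--Ascoli on $\overline{B_R}$ then identifies the uniform limit with the $H^1$ limit.

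\emph{Sign choice and strong convergence.} For each $q$, replace $u_q$ by $-u_q$ if necessary so that $\int u_q u_{B_1}\,dx\ge 0$. This is allowed because $\Ecal_{q,M,\eta}$ is even. By Proposition~\ref{prop: Gamma convergence}, every subsequence has a further subsequence converging weakly to $\pm u_{B_1}$, and the sign normalization forces the limit to be $u_{B_1}$; hence the whole family converges weakly. The same proposition gives convergence of the minimal values to $\lambda_0(B_1)$. Since $q D(u_q,u_q)\to 0$ (bounded by $q\cdot C$ via Hardy--Sobolev), $f_\eta(|\{u_q\neq0\}|)=0$ by Remark~\ref{rmk:etafissato}, and the $M$-term is nonnegative, we obtain $\limsup\int|\nabla u_q|^2\le\lambda_0(B_1)=\int|\nabla u_{B_1}|^2$. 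Together with weak lower semicontinuity, this gives $u_q\to u_{B_1}$ strongly in $\H(B_R)$.

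\emph{Uniform Hölder estimate (the main step).} I would show that the $u_q$ are quasi-minimizers of the Dirichlet energy in the sense of Giaquinta--Giusti, with uniform constants. Fix a ball $B_r(x_0)\subset B_R$ with $r\le1$, and let $v$ be any competitor with $v=u_q$ outside $B_r(x_0)$. Minimality gives
\[
\int_{B_r}|\nabla u_q|^2\le\int_{B_r}|\nabla v|^2+\tfrac q2\bigl(D(v,v)-D(u_q,u_q)\bigr)+M\Bigl|\int v^2-\int u_q^2\Bigr|+\tfrac1\eta|B_r|.
\]
The last term uses \eqref{eq:prop feta}, since the positivity set changes at most inside $B_r$. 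I would take $v$ to be the harmonic replacement of $u_q$ in $B_r(x_0)$. Then $\|v\|_\infty\le\|u_q\|_\infty\le C$ by the maximum principle and Theorem~\ref{thm: pde and L infty estimate}, and $\|v_{u_q}\|_\infty\le C$, so the error terms are bounded by $C r^3$. For the $D$-term, expand $D(v,v)-D(u_q,u_q)=2D(v-u_q,u_q)+D(v-u_q,v-u_q)$ and bound it by $C\|v-u_q\|_{L^1(B_r)}\le Cr^3$. This yields
\[
\int_{B_r}|\nabla(u_q-v)|^2\le Cr^3,
\]
and the standard Campanato iteration then gives $\int_{B_\rho}|\nabla u_q|^2\le C\rho^{3-2+2\alpha}$, i.e.\ $u_q\in C^{0,\alpha}(B_R)$ with a constant independent of $q$. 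Here $u_q$ is extended by zero, and the competitor is admissible in $\H(B_R)$ because $\Omega\subset B_{R/2}$ by Remark~\ref{rem:optimal set does not touch the boundary}. The obstacle I expect is bookkeeping the uniform $L^\infty$ bound: the constant in Theorem~\ref{thm: pde and L infty estimate} depends only on $|\Omega|=|B_1|$, and one must check that the theorem applies to minimizers of $\Ecal_{q,M,\eta}$. It does, via Proposition~\ref{prop:equivalence E_q and E_q,M} and Theorem~\ref{thm:noconstraint}.

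\emph{Conclusion.} The family $(u_q)_q$ is equibounded and equi-Hölder, so by Arzelà--Ascoli every sequence has a uniformly convergent subsequence. Its limit must coincide with the $L^2$ limit $u_{B_1}$, so the whole family converges uniformly in $B_R$ to $u_{B_1}$.
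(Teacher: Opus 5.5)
Your proposal is correct and follows essentially the paper's argument. In both, harmonic replacement on small balls costs only $O(r^3)$ in the energy; the Campanato/Giaquinta iteration then gives a $q$-uniform Hölder bound (the paper takes $\beta=2$, i.e.\ exponent $1/2$), followed by Arzelà--Ascoli and identification of the limit via Proposition~\ref{prop: Gamma convergence}. Your upfront sign normalization $\int u_q u_{B_1}\,dx\ge 0$ is a slightly cleaner way to make ``up to taking $-u_q$'' precise and to get convergence of the whole family; the strong $\H$ convergence step is harmless but not needed for uniform convergence.
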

\begin{proof}
    Let $u\in \H(B_R)$ be an optimal function of problem \eqref{eq:min mathcal E q,M,eta}. Let $\widehat{x}\in B_{R-1}$ and $0<r<1$. Let us consider a function $\varphi\in \H(B_r(\widehat x))$ and define $v=u+\phi$. Then by minimality it holds
    \begin{align}\label{eq:min in holder}
        \int_{B_R}|\nabla u|^2+\frac{q}{2}D(u,u)+&M\Big| \int_{B_R}u^2dx-1 \Big|+f_\eta(|\{u\ne 0\}|)  \\ &\leq    \int_{B_R}|\nabla v|^2+\frac{q}{2}D(v,v)+M\Big| \int_{B_R}v^2dx-1 \Big|+f_\eta(|\{v\ne 0\}|).   
    \end{align}
    By \eqref{eq:prop feta}, $\bigg|f_\eta(|\{v\ne 0\}|)-f_\eta(|\{u\ne 0\}|)\bigg|\leq C(\eta)r^3$ where $C(\eta)=C\min \{\eta,1/\eta\}$ and $C$ is a universal constant possibly increasing from line to line. Considering the $L^2$ term, we obtain, for a universal constant $C>0$, 
    \[ \Bigg| \Big| \int_{B_R}u^2dx-1 \Big|-\Big| \int_{B_R}v^2dx-1 \Big| \Bigg|\leq \Bigg| \int_{B_r(\widehat x)} u^2-v^2 dx\Bigg|\leq C (||u||^2_{L^\infty(B_R)}+||v||^2_{L^\infty(B_R)})r^3.\]
    Finally the non local term gives
    \begin{equation}
    \begin{split}
        D(v,v)-D(u,u)&=D(u+\phi,u+\phi)-D(u,u)= 2D(u,\phi)+D(\phi,\phi)\\
        &=2\int_{B_r(\widehat x)}v_u \varphi dx+\int_{B_r(\widehat x)}\int_{B_r(\widehat x)} \frac{\phi(x)\phi(y)}{|x-y|}\,dxdy.
    \end{split}
    \end{equation}
    Since 
    \begin{align}
        \int_{B_r(\widehat x)}v_u \varphi dx\leq &C \|v_u\|_{L^\infty(B_R)}\|\varphi\|_{L^\infty(B_R)}r^3
    \end{align}
    and by \eqref{eq:Hardy-Sobolev} and \cite[Lemma 2.4]{FuscoPratelli}
    \begin{align}
        D(\phi,\phi)\leq C \|\phi\|^2_{L^\infty(B_R)}r^3.
    \end{align}
    Thus \eqref{eq:min in holder} becomes
        \begin{align}\label{eq:min in holder 2}\small
        \int_{B_r(\widehat x)}|\nabla u|^2\leq \int_{B_r(\widehat x)}|\nabla v|^2+C\Bigg((||u||^2_{L^\infty(B_R)}+||v||^2_{L^\infty(B_R)})+C(\eta)\Bigg)r^3.
    \end{align}
    for a universal constant $C$.
    Let us now consider a suitable explicit test function $v$, namely the solution to
\[ 
    \begin{cases}
        \Delta v=0 & \text{in }B_r(\widehat x)\\
        v=u & \text{on } B^c_r(\widehat x)\,.
    \end{cases}\]
    Observe that $\int_{B_r(\widehat x)} |\nabla v|^2dx\leq \int_{B_r(\widehat x)} |\nabla u|^2dx$ and $||v||_{L^\infty(B_R)}\leq ||u||_{L^\infty(B_R)}$ by weak maximum principle (see \cite[Theorem 8.1]{GilbargTrudinger}). Moreover $u-v\in \H(B_r(\widehat x))$, so $v$ can be taken as test function in \eqref{eq:min in holder}. 
    Then \eqref{eq:min in holder 2} becomes, recalling that $u$ is uniformly bounded in $L^\infty$, see Theorem~\ref{thm: pde and L infty estimate},
\begin{equation}\label{eq:stima per Campanato}
         \int_{B_r(\widehat x)}|\nabla u|^2\leq \int_{B_r(\widehat x)}|\nabla v|^2 + C(\eta)r^3.  
\end{equation}
By the equation solved by $v$, it holds $\int_{B_r(\widehat x)}\nabla v\cdot \nabla (u-v)dx=0$, so $\int_{B_r(\widehat x)}|\nabla v|^2dx=\int_{B_r(\widehat x)}\nabla v \cdot \nabla u\, dx$. Finally we deduce 
\[ \int_{B_r(\widehat x)}|\nabla (u-v)|^2dx=\int_{B_r(\widehat x)}|\nabla v|^2dx+\int_{B_r(\widehat x)}|\nabla u|^2dx-2\int_{B_r(\widehat x)}\nabla u\cdot \nabla vdx= \int_{B_r(\widehat x)}|\nabla u|^2dx-\int_{B_r(\widehat x)}|\nabla v|^2dx.\]
Thus by \eqref{eq:stima per Campanato} we obtain
\begin{align}\label{eq:gradienti funzioni armoniche}
   \int_{B_r(\widehat x)}|\nabla (u-v)|^2dx\leq  C(\eta) r^{3} .
\end{align}
Consider $0<\rho<r$. Since $v$ is harmonic, then also $\partial_iv$ for $i=1,2,3$ is harmonic. So by the mean value theorem for harmonic functions and Jensen's inequality, there holds
\[ |\nabla v(x)|^2=\sum_{i=1}^3 |\partial_iv|^2=\sum_{i=1}^3 \Big(\mean{B_r(\widehat x)}\partial_iv\,dy\Big)^2\leq \sum_{i=1}^3 \Big(\mean{B_r(\widehat x)}|\partial_iv|^2\,dy\Big) =\mean{B_r(\widehat x)}|\nabla v|^2\,dx. \]
Then, integrating over $B_\rho(\widehat x)$ the above chain of inequalities, we obtain
\begin{equation}\label{eq: riscalamento armoniche}
     \int_{B_\rho(\widehat x)}|\nabla v|^2\,dx\leq\frac{|B_\rho|}{|B_r|}\int_{B_r(\widehat x)}|\nabla v|^2\,dx. 
\end{equation}
Thus by \eqref{eq:gradienti funzioni armoniche} and \eqref{eq: riscalamento armoniche}, there holds
\begin{align}\label{eq: iteration lemma 1}
    \int_{B_\rho(\widehat x)}|\nabla u|^2dx= &  \int_{B_\rho(\widehat x)}|\nabla \big((u-v)+v\big)|^2dx\le \int_{B_r(\widehat x)}|\nabla (u-v)|^2dx+  \int_{B_\rho(\widehat x)}|\nabla v|^2dx\\ \leq& C(\eta)r^3+\Big(\frac{\rho}{r}\Big)^3\int_{B_r(\widehat x)}|\nabla v|^2dx\leq Cr^3+\Big(\frac{\rho}{r}\Big)^3\int_{B_r(\widehat x)}|\nabla u|^2 dx.
\end{align}
Set $\psi(s):=\int_{B_s(\widehat x)}|\nabla u|^2dx$, then \eqref{eq: iteration lemma 1} can be rewritten as
\begin{equation}
    \psi(\rho)\leq C(\eta)r^{3}+\Big(\frac{\rho}{r}\Big)^3\psi(r).
\end{equation}
In light of \cite[Lemma 2.1, Chapter 3]{Giaquinta} or~\cite[Lemma~5.13]{giaqmart}, then for all $\beta\in (0,3)$, it holds 

\begin{equation}\label{eq:gradient estimate after Giaquinta lemma}
    \int_{B_\rho(\widehat x)}|\nabla u|^2= \psi(\rho)\le C(\eta, \beta)\rho^{\beta}. 
\end{equation}
where $C(\eta,\beta)$ is a universal constant depending only on $\eta, \beta$.  Let us choose $\beta=2$. Finally by the classical Campanato criterion (for further details see \cite[Chapter 3]{Giaquinta} or~\cite[Section~5]{giaqmart}),
\begin{equation}\label{eq: holder estimate local}
    \|u\|_{C^{0,1/2}(B_r(\widehat x))}\leq C(\eta),
\end{equation}
where $C=C(\eta)$ is a constant independent of $q,\,r$ and $\widehat x$. Then we can consider a finite covering of $B_{R/2}$ with balls $B_r(x_i)$ for some $x_i\in B_{R-1}$. Since by Remark \ref{rem:optimal set does not touch the boundary}, $\{u\ne 0\}\subseteq B_{R/2}\subseteq B_R$, then we can cover $\{u\ne 0\}$ with a finite number of balls of radius $r$, to obtain 
\begin{equation}\label{eq:holder estimate final}
    \|u\|_{C^{0,1/2}(B_{R/2})}\leq C(\eta),
\end{equation}
where $C=C(\eta)$. Finally by Proposition \ref{prop: Gamma convergence} there exists a sequence of functions $(u_q)$ minimizing $\mathcal{E}_{q,M,\eta}$ that weakly converge as $q\to 0$ to $u_{B_1}$ (or $-u_{B_1}$), the first positive normalized eigenfunction of the Dirichlet Laplacian on $B_1$. For each element of this sequence, \eqref{eq:holder estimate final} holds. Then by Ascoli-Arzel\`a, up to a subsequence, there exists $v\in \H(B_R)$ such that $u_q$ converge uniformly to $v$.  By uniqueness of the limit $v=u_{B_1}$ (or $-u_{B_1}$). Since $-u_q$ is still a minimizer for $\mathcal{E}_{q,M,\eta}$, then up to changing some elements in $(u_q)_q$ with its opposite, without loss of generality we can suppose that the uniform limit is $u_{B_1}.$
\end{proof}
\begin{remark}\label{rem:d dimensional version of holder estimate}
Observe that a local H\"older estimate follows also by the $C^{\infty}(\Omega_q)$ regularity proven in Theorem \ref{thm: pde and L infty estimate}. The difference is that \eqref{eq: holder estimate local} holds locally in $B_R$ and not only in $\Omega_q$, allowing to obtain a global estimate.  Moreover dealing with the $d$-dimensional version of this problem, Proposition \ref{prop: L^infty convergence} still holds taking $\beta=d-1$ in \eqref{eq:gradient estimate after Giaquinta lemma}.
\end{remark}
 \noindent
Thanks to the aforementioned convergences, now we can prove the nonnegativity of an optimal function.
\begin{proposition}\label{prop: positivity of the minimizers}
	There exists $q_3=q_3(\eta)$ such that for all $q\leq q_3$ all optimal functions $u_q\in \H(B_R)$ attaining the infimum of $\Ecal_{q,M,\eta}$ on $\H(B_R)$ have constant sign.
	\begin{proof}
		By Theorem \ref{thm:existenceEq,M,eta}, we know that there exists a minimizer $u\in \H(B_R)$ for $\Ecal_{q,M,\eta}$ on $\H(B_R)$. Let us show that testing the minimality of $u$ against $u^+$ leads to the claim. By minimality it holds
		\begin{align*}
			\int_{B_R} |\nabla u(x)|^2\,& dx+\frac{q}{2}\int_{B_R} \int_{B_R} \frac{u(x)u(y)}{|x-y|}\, dxdy+ M\left|\int_{B_R}u^2dx-1\right|+f_\eta(|\{u\ne 0\}|)\\ \leq& 	\int_{B_R} |\nabla u^+(x)|^2\, dx+\frac{q}{2}\int_{B_R} \int_{B_R} \frac{u^+(x)u^+(y)}{|x-y|}\, dxdy+ M\left|\int_{B_R}(u^+)^2dx-1\right|+f_\eta(|\{u^+\ne 0\}|),
		\end{align*}
		Moreover 
	\begin{align*}
		\left|\int_{B_R} u^2dx-1\right|-\left|\int_{B_R} (u^+)^2dx-1\right|\leq \left| \int_{B_R} u^2dx - \int_{B_R} (u^+)^2dx\right|= \int_{\{u<0\}} u^2dx,
	\end{align*} 
	and by \eqref{eq:prop feta}
	\[f_\eta(|\{u\ne 0\}|)-f_\eta(|\{u^+\ne 0\}|)\geq \eta |\{u<0\}|. \]
	So
			\begin{align*}
			\int_{\{u<0\}} |\nabla u(x)|^2&\, dx+\frac{q}{2}\int_{\{u<0\}} \int_{\{u<0\}} \frac{u(x)u(y)}{|x-y|}\, dxdy\\&+q\int_{\{u>0\}} \int_{\{u<0\}} \frac{u(x)u(y)}{|x-y|}\, dxdy -  M \int_{\{u<0\}}u^2dx+\eta |\{u<0\}|\leq 0.
		\end{align*}
	By Hardy-Sobolev  and H\"older inequalities there holds
	\begin{align*}
		\left|
		\int_{\{u>0\}} \int_{\{u<0\}} \frac{u(x)u(y)}{|x-y|} dxdy\right|\leq&  \int_{\{u>0\}} \int_{\{u<0\}} \frac{|u(x)||u(y)|}{|x-y|} dxdy \\ 
		\leq& C||\nabla u||_{L^1(B_R)} \int_{\{u<0\}} |u(x)|dx \\ 
		\leq& C||\nabla u||_{L^1(B_R)}||u^-||_{L^\infty(B_R)} |\{u<0\}|\\
		\leq& C(R)||u^-||_{L^\infty(B_R)}||\nabla u||_{L^2(B_R)} |\{u<0\}|.
	\end{align*}
	Since $u$ minimizes $\Ecal_{q,M,\eta}$, then $||\nabla u||_{L^2(B_R)}\leq \Ecal_{q,M,\eta}(u)\leq E_1(B_1)+|B_1|$, implying
	\begin{equation}\label{eq: |{u<0}| inequality}
		-q\int_{\{u>0\}} \int_{\{u<0\}} \frac{u(x)u(y)}{|x-y|} dxdy\geq - C(R)||u^-||_{L^\infty(B_R)}|\{u<0\}|.
	\end{equation}
	So it holds
	\[	\int_{\{u<0\}} |\nabla u(x)|^2\, dx+\frac{q}{2}\int_{\{u<0\}} \int_{\{u<0\}} \frac{u(x)u(y)}{|x-y|}\, dxdy+ \Big(\eta-M||u^-||^2_{L^\infty(B_R)}- C(R)||u^-||_{L^\infty(B_R)}\Big) |{u<0}|\leq 0.\]
	Finally, by Proposition \ref{prop: L^infty convergence} and up to taking $-u$, there exists $q_3(\eta)$ such that for all $q\in(0,q_3)$ it holds
	\[\eta-M||u^-||^2_{L^\infty(B_R)}- C(R)||u^-||_{L^\infty(B_R)}>0.\]
		Thus by \eqref{eq: |{u<0}| inequality}, $|\{u<0\}|=0$.
	\end{proof}
    \begin{remark}
        Since  $u_q$ and $-u_q$ are minimizers of $\mathcal{E}_{q,M,\eta}$, by Proposition \ref{prop: positivity of the minimizers} and without loss of generality, we can consider the optimal functions to be nonnegative. 
    \end{remark}

\end{proposition}
\subsection{Free boundary formulation}\label{sec:freeboundary}
In this section we want to improve the regularity for our optimal set. 
	\begin{lemma}\label{le:non degeneracy}
          Let  $q\in (0,q_3]$, let $\Om$ be an  optimal set for problem~\eqref{eq: R q,M,eta in B_R},
          and let $u\in H^1_0(\Omega)$ be any  (nonnegative) function attaining  $E_q(\Omega)=E_{q,M}(\Omega)$.
          Then for every $\kappa\in (0,1)$ there are positive constants $K_0,\rho_0$ depending only on $\kappa, \eta$  such that the following assertion holds: if  $\rho\leq \rho_0$ and $x_0\in \overline{B_R}$, then
		\begin{equation}\label{eq:nondegmean}
			\mean{\partial B_\rho(x_0)\cap B_R}{u\,d\mathcal H^{2}}\leq K_0 \rho\,\,\,\,
			\Longrightarrow \,\,\,\,u\equiv0 \text{ in }\,\,B_{\kappa\rho}(x_0)\cap B_R.
		\end{equation}
	\end{lemma}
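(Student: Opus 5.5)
We follow the classical Alt--Caffarelli non-degeneracy argument, adapted as in \cite{MazzoleniRuffini,MazzoleniMuratovRuffini} to quasi-minimizers whose error terms scale like the volume. Fix $x_0$, $\rho\le\rho_0$, and set $\varepsilon:=\sup_{B_{\sqrt{\kappa}\rho}(x_0)}u$. By Theorem~\ref{thm: pde and L infty estimate}, $-\Delta u\le \lambda_{q,\Omega}u\le C$ in $\Omega$, with $C$ universal, since $u\ge0$, $\|u\|_{L^\infty}\le C$, $v_u\ge 0$ and $\lambda_{q,\Omega}\le E_1(B_1)$. Hence $u+C|x-x_0|^2$ is subharmonic in $B_\rho(x_0)$, extended by zero outside $\Omega$ (this uses $u\ge0$). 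The mean value property then gives
\[
\varepsilon\le C(\kappa)\Big(\mean{\partial B_\rho(x_0)}u\,d\mathcal H^2+\rho^2\Big)\le C(\kappa)(K_0+\rho_0)\rho .
\]

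As competitor we take the usual cut-off $v:=\min\{u,w\}$, with $w$ a radial barrier that vanishes on $B_{\kappa\rho}(x_0)$, equals $\varepsilon$ on $\partial B_{\sqrt\kappa\rho}(x_0)$, and is harmonic (capacitary profile) in the annulus; outside $B_{\sqrt\kappa\rho}(x_0)$ we set $v=u$. Then $v\in H^1_0(B_R)$, $v\le u$, and $\{v\neq0\}=\{u\neq0\}\setminus B_{\kappa\rho}(x_0)$. We then compare $\Ecal_{q,M,\eta}(u)\le\Ecal_{q,M,\eta}(v)$ term by term.
\begin{itemize}
\item \emph{Measure term.} By \eqref{eq:prop feta} it gains at least $\eta|\{u>0\}\cap B_{\kappa\rho}(x_0)|$.
\item \emph{Coulomb term.} Since $0\le v\le u$, we get $D(v,v)\le D(u,u)$, so this term helps us; this is exactly where nonnegativity from Proposition~\ref{prop: positivity of the minimizers} is used.
\item \emph{$L^2$ term.} It costs at most $M\int_{B_{\sqrt\kappa\rho}}(u^2-v^2)\le M\varepsilon\int_{B_{\sqrt\kappa\rho}}(u-v)$.
\end{itemize}
This yields
\[
\int_{B_{\kappa\rho}}|\nabla u|^2+\eta|\{u>0\}\cap B_{\kappa\rho}|\le \int_{A}\big(|\nabla v|^2-|\nabla u|^2\big)+M\varepsilon\int_{B_{\sqrt\kappa\rho}}(u-v),
\]
where $A$ is the annulus. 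The standard Alt--Caffarelli computation (integrating by parts against $w$ on $\{u>w\}$ and using the trace inequality on $\partial B_{\kappa\rho}$) bounds the right-hand side by
\[
C(\kappa)\frac{\varepsilon}{\rho}\int_{\partial B_{\kappa\rho}}u\,d\mathcal H^2+\text{(error from }-\Delta u\le C\text{)}.
\]
The error is controlled by $C\rho\int_{B_{\sqrt\kappa\rho}}u$, which is of order $\varepsilon\rho^4$, plus the $M$-term of order $\varepsilon^2\rho^3$.

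To close, we use the trace/BV-type inequality
\[
\int_{\partial B_{\kappa\rho}}u\le C(\kappa)\Big(\int_{B_{\kappa\rho}}|\nabla u|+\frac1\rho\int_{B_{\kappa\rho}}u\Big)\le C\Big(\frac{1}{\varepsilon}\!\int_{B_{\kappa\rho}}|\nabla u|^2+\Big(\varepsilon+\frac{\varepsilon}{\rho}\Big)|\{u>0\}\cap B_{\kappa\rho}|\Big),
\]
obtained by Young on $|\nabla u|\le \frac{1}{2\varepsilon}|\nabla u|^2+\frac\varepsilon2$. Combining, we get
\[
\int_{B_{\kappa\rho}}|\nabla u|^2+\eta|\{u>0\}\cap B_{\kappa\rho}|\le C(\kappa)\Big(\frac{\varepsilon}{\rho}+\frac{\varepsilon^2}{\rho^2}+\rho\Big)\Big(\int_{B_{\kappa\rho}}|\nabla u|^2+|\{u>0\}\cap B_{\kappa\rho}|\Big).
\]
Since $\varepsilon/\rho\le C(\kappa)(K_0+\rho_0)$, choosing $K_0$ and $\rho_0$ small in terms of $\kappa$ and $\eta$ forces both left-hand quantities to vanish, so $u\equiv0$ in $B_{\kappa\rho}(x_0)$. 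Points $x_0$ near $\partial B_R$ are handled trivially by Remark~\ref{rem:optimal set does not touch the boundary}, since $u$ vanishes near $\partial B_R$.

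\textbf{Main obstacle.} The delicate step is the bookkeeping of the lower-order terms. The $L^2$ penalization and the right-hand side $\lambda u-\frac q2 v_u$ are not present in the pure Alt--Caffarelli functional. They must be shown to be of higher order in $\rho$, namely $O(\rho)\times$(gradient plus volume) or $O(\varepsilon/\rho)\times$ the same, uniformly in $q$. This relies on the uniform $L^\infty$ bounds of Theorem~\ref{thm: pde and L infty estimate} and on the sign $u\ge0$, which makes the nonlocal term monotone under truncation.
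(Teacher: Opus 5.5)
Your overall route, an Alt--Caffarelli cut-off $v=\min\{u,w\}$, is a natural way to carry out the argument the paper only cites: the paper defers to \cite[Lemma~3.9]{MazzoleniMuratovRuffini} and checks just that $\gamma_1=2\sup(\lambda_q u-qv_u)>0$. Several of your ingredients are sound. The bound on $\eps$ from the subharmonicity of $u+C|x-x_0|^2$ is correct and needs no sign on $\gamma_1$. So are $D(v,v)\le D(u,u)$ from $0\le v\le u$ and the gain $\eta|\{u>0\}\cap B_{\kappa\rho}|$.

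The gap is in the final absorption step. Since $\int u^2=1$ at the minimizer and $v\le u$, the penalty costs exactly $M\int(u^2-v^2)$. On the annulus $A=B_{\sqrt\kappa\rho}(x_0)\setminus B_{\kappa\rho}(x_0)$ this equals $M\int_A(u^2-\min\{u,w\}^2)$. That is positive as soon as $|\{u>w\}\cap A|>0$, whatever $u$ does in $B_{\kappa\rho}$; it can be positive even if $u\equiv0$ there. What your computation actually gives is therefore
\[
X\le c\,X+2M\eps\int_A(u-w)^+,\qquad X=\int_{B_{\kappa\rho}}|\nabla u|^2+\eta|\{u>0\}\cap B_{\kappa\rho}|.
\]
The last term is additive, as are your own bounds ``$\eps^2\rho^3$'' and ``$\eps\rho^4$''. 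It cannot be rewritten as $O(\rho)X$, and $X\le cX+O(\eps^2\rho^3)$ does not force $X=0$. Keeping the discarded term $-\int_A|\nabla(u-w)^+|^2$ does not help either: a quadratic term cannot absorb the linear term $\int_A(u-w)^+$ when $(u-w)^+$ is small. Separately, the ``error from $-\Delta u\le C$'' is spurious, since the integration by parts on $A$ is against $w$ only and never uses the equation of $u$.

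The fix is to build the $L^2$ cost into the barrier. Let $w$ be radial with $-\Delta w=M\eps$ in $A$, $w=0$ on $\partial B_{\kappa\rho}$ and $w=\eps$ on $\partial B_{\sqrt\kappa\rho}$. Then $w>0$ in $A$ and $0<w'(\kappa\rho)\le C(\kappa)\eps(\rho^{-1}+M\rho)$. Set $g=(u-w)^+$ and use $u^2-v^2\le 2\eps(u-v)$ in $B_{\sqrt\kappa\rho}$; then
\[
\int_A\big(|\nabla v|^2-|\nabla u|^2\big)+2M\eps\int_A g\le-2\int_A\nabla w\cdot\nabla g+2M\eps\int_A g=2w'(\kappa\rho)\int_{\partial B_{\kappa\rho}}u\,d\mathcal H^2 .
\]
The remaining cost inside $B_{\kappa\rho}$ is at most $2M\eps^2|\{u>0\}\cap B_{\kappa\rho}|$, which is genuinely proportional to $X$.

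Your Young inequality also needs correcting. With $|\nabla u|\le\frac1{2\eps}|\nabla u|^2+\frac\eps2$, multiplying by the factor $\eps/\rho$ puts a coefficient of order $1/\rho$ in front of $\int_{B_{\kappa\rho}}|\nabla u|^2$, which is not small. Use instead $|\nabla u|\le\frac12(|\nabla u|^2+1)$ on $\{u>0\}$, as in Alt--Caffarelli. With both corrections the trace inequality gives
\[
X\le C(\kappa)\big(\eps/\rho+\eps^2/\rho^2+\eps^2\big)\Big(\int_{B_{\kappa\rho}}|\nabla u|^2+|\{u>0\}\cap B_{\kappa\rho}|\Big),
\]
and the conclusion follows from the smallness of $\eps/\rho$ as you intended.
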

    \begin{proof}
        The proof is the same of \cite[Lemma 3.9]{MazzoleniMuratovRuffini}. Let us just point out that 		
        \[
		\gamma_1:=2\sup_x\left(\lambda_q u(x)-q v_u(x) \right)>0. 
		\]
        Indeed suppose by contradiction that $\gamma_1\le0$. Then $u(x)\leq q \frac{v_u(x)}{\lambda_q}$ for all $x\in \R^3.$ Recalling that $\|v_u(x)\|_{L^\infty(\R^3)}\leq C E_1(B_1)$ by \eqref{eq:Hardy-Sobolev} where $C$ is a universal constant, and $\lambda_q\geq \lambda_0(\Omega)\geq \lambda_0(B_1)$ since $|\Omega|\leq|B_1|$. Then by nonnegativity of $u$, 
        \[ ||u||_{L^\infty(\R^3)}\leq C q.\]
        Finally $u$ has unitary $L^2$ norm and $|\Omega|=|B_1|$, so we obtain a contradiction as $q$ is small enough.
    \end{proof}

		\begin{lemma}\label{le:lipschitz}
		Let $\eta$, $q$, $\Om$ and $u$ be as in	Lemma~\ref{le:non degeneracy}. The function $u$ can be extended to a Lipschitz continuous function defined in the whole $B_R$, with Lipschitz constant $L=L(\eta)$. In particular,	$\Omega=\{u>0\}\subset B_R$ is an open set.
	\end{lemma}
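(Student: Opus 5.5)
We follow the standard Alt--Caffarelli route for Lipschitz regularity of one-phase minimizers, adapted to the present functional as in \cite{MazzoleniMuratovRuffini}. The proof has two ingredients: a differential inequality making $u$ subharmonic up to a bounded error, and an upper bound on the growth of $u$ away from its zero set, which comes from minimality.

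First, since $u\ge 0$ is bounded (Theorem~\ref{thm: pde and L infty estimate}) and $v_u\ge 0$ is bounded, the Euler--Lagrange equation \eqref{eq:PDEq} gives $-\Delta u\le \lambda_q u\le \gamma$ in $\Omega$. Extending $u$ by zero, a standard argument (testing with $\min(u/\eps,1)\phi$ and letting $\eps\to0$) shows $\Delta u\ge -\gamma$ in the sense of distributions on all of $B_R$, with $\gamma$ universal. In particular $u+\gamma|x-x_0|^2/6$ is subharmonic, so $u$ has an upper semicontinuous representative satisfying sub-mean-value inequalities up to $O(\rho^2)$ errors. It follows that $\{u>0\}$ agrees with $\Omega$ up to null capacity and, once continuity is known, is open.

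Second, we prove the key estimate: there is $C_L=C_L(\eta)$ such that, for every $x_0$ and every $\rho\le\rho_0$ with $B_\rho(x_0)\cap\{u=0\}\neq\emptyset$,
\[
\mean{\partial B_\rho(x_0)}{u\,d\mathcal H^2}\le C_L\rho .
\]
To show this, compare $u$ with the competitor $v$ obtained by replacing $u$ in $B_\rho(x_0)$ with the solution of $-\Delta v=\gamma$ (or the harmonic replacement) having the same boundary values. As in the proof of Proposition~\ref{prop: L^infty convergence}, the variations of the nonlocal term, the $L^2$ penalization and $f_\eta$ are bounded by $C(\eta)\rho^3$. Minimality then gives
\[
\int_{B_\rho}|\nabla(u-v)|^2\le C(\eta)\,|\{u=0\}\cap B_\rho|+C\rho^3 ,
\]
where the measure term comes from $f_\eta$ increasing the volume by at most $\eta^{-1}|\{u=0\}\cap B_\rho|$. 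The classical Alt--Caffarelli computation (Lemma~3.2 in \cite{AltCaffarelli}) then bounds the left-hand side from below by $c\,\rho^{-2}\big(\mean{\partial B_\rho}u\big)^2|\{u=0\}\cap B_\rho|$. Combining the two bounds yields the desired linear growth, with an extra $\rho^3$ error absorbed by allowing $C_L\ge1$.

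Finally we conclude as in \cite{AltCaffarelli}. For $x\in\{u>0\}$, let $d=\mathrm{dist}(x,\{u=0\})$. If $d\ge\rho_0/2$, interior estimates from Theorem~\ref{thm: pde and L infty estimate} together with the $L^\infty$ bound give $|\nabla u(x)|\le C$. Otherwise, the growth estimate applied on $B_{2d}(x)$, combined with subharmonicity, gives $\sup_{B_d(x)}u\le Cd$. Gradient estimates for $-\Delta u=\lambda_qu-\frac q2v_u$ in $B_d(x)$ then give $|\nabla u(x)|\le C(\sup_{B_d}u/d+d)\le L$. Hence $\nabla u\in L^\infty(B_R)$ and $u$ is Lipschitz with constant $L=L(\eta)$, so $\Omega=\{u>0\}$ is open.

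The main obstacle is the second step, namely controlling the nonlocal and penalization terms in the comparison by $O(\rho^3)$ uniformly, and making the Alt--Caffarelli lower bound work with a nonzero right-hand side. We expect to handle the latter by working with $u+\gamma|x-x_0|^2/6$.
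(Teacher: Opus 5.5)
Your Step~2 has a genuine gap: the additive error cannot be absorbed. From
\[
c\rho^{-2}\Big(\mean{\partial B_\rho(x_0)}u\,d\mathcal H^2\Big)^2|Z|\le C(\eta)|Z|+C\rho^3,\qquad Z:=\{u=0\}\cap B_\rho(x_0),
\]
you can deduce $\mean{\partial B_\rho}u\le C_L\rho$ only if $|Z|\gtrsim\rho^3$, and nothing available gives that. The hypothesis $B_\rho\cap\{u=0\}\neq\emptyset$ carries no measure information: in Step~3 the ball $B_{2d}(x)$ may meet the zero set in a set of arbitrarily small, even zero, measure. The density bound of Lemma~\ref{le:deinsity estimate} comes after, and relies on, the Lipschitz estimate.

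Alt--Caffarelli's Lemma~3.2 closes only because it has no additive term. There, a large $\frac1\rho\mean{\partial B_\rho}u$ forces $|Z|=0$, hence $\int_{B_\rho}|\nabla(u-v)|^2=0$, hence $u=v>0$ in the whole ball. That is exactly the contrapositive your Step~3 uses.

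With your competitors the error really is there. For the harmonic replacement, or for $-\Delta v=\gamma$, the changes of $D$ and of $\int v^2$ are of size $\|u-v\|_{L^1(B_\rho)}$, which is not controlled by $|Z|$. It is nonzero even when $Z=\emptyset$, since $\Delta(v-u)\neq0$ in $\Omega\cap B_\rho$. Working with $u+\gamma|x-x_0|^2/6$ only repairs the lower bound, not this.

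This is why the paper takes the Brian\c{c}on--Hayouni--Pierre route instead. Testing with $u\pm r^{3/2}\|\nabla\varphi\|_{L^2}^{-1}\varphi$ gives $|\Delta u|(B_r(x_0))\le Cr^2$. In that estimate the volume term is bounded crudely by $|B_{2r}|$, so $O(r^3)$ errors are harmless. The spherical-mean identity then gives $\mean{\partial B_r(x_0)}u\le u(x_0)+Cr$, and the Gustafsson--Shahgholian lemma gives $|\nabla u|\le C$.

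If you want to keep your route, use a competitor for which the first-order terms cancel exactly. Let $v$ solve $-\Delta v=\lambda_qu-\frac q2v_u$ in $B_\rho(x_0)$, with $v=u$ outside, and compare $u$ with $v/\|v\|_{L^2}$; the $M$-term then vanishes, since $\|u\|_{L^2}=1$. By the Euler--Lagrange equation one obtains
\[
\int_{B_\rho}|\nabla(u-v)|^2+\lambda_q\int_{B_\rho}(u-v)^2-\frac q2D(u-v,u-v)\le\frac1\eta\|v\|_{L^2}^2\,|Z|,
\]
with no additive term. The nonlocal part satisfies $D(u-v,u-v)\le C\rho^4\int_{B_\rho}|\nabla(u-v)|^2$ by Hardy--Littlewood--Sobolev and Poincar\'e, so it is absorbed for small $\rho$.

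The lower bound survives as well. Take $\gamma\ge\|\lambda_qu-\frac q2v_u\|_{L^\infty}$. The Poisson kernel and the torsion function give $v\ge(\rho-|x-x_0|)\big(\frac1{8\rho}\mean{\partial B_\rho(x_0)}u-\frac{\gamma\rho}3\big)$. So the Alt--Caffarelli lower bound holds once $\frac1\rho\mean{\partial B_\rho}u$ is large, and you recover ``$u>0$ in $B_\rho$'' with no error. Your Steps~1 and~3 then go through.
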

\begin{proof}
    We follow the approach of~\cite[Section~3.2]{VelichkovLecture}, first proposed in~\cite{BrianconHayouniPierre}.
    
    \noindent{\it Step 1.} We prove an estimate on the nonnegative Radon measure $|\Delta u|$, namely
		\begin{equation}\label{eq:laplest}
                  |\Delta u|(B_r{(x_0)})\leq C
                  r^2,\qquad{\text{for all $x_0\in B_R$ and
                      {$0 < r < 1$} such that $B_{2r}(x_0)\subset B_R$}}
		\end{equation}
        for a universal constant $C>0$.  Let $\psi\in C^\infty_c({B_{2r}(x_0)})$ for some ${B_{2r}(x_0)}\subset B_R$ , with $\|\psi\|_{L^\infty}\leq c$, and we test the optimality of $u$ against $u+\psi$, obtaining:\[
		\int_{B_R} |\nabla u|^2\, dx+\frac{q}{2}D(u,u)+f_\eta(|\{u>0\}|)\leq \int_{B_R} |\nabla (u+\psi)|^2\, dx+\frac{q}{2}D\Big(u+\psi,u+\psi\Big)+f_\eta(|\{u+\psi\ne0\}|),
		\]
		which implies 
		\[
		-{2} \int_{{B_{2r}(x_0)}}\nabla u\cdot \nabla \psi\, dx\leq
		\int_{{B_{2r}(x_0)}}|\nabla \psi|^2\, dx+C_\eta|\{u=0\}\cap
		{B_{2r}(x_0)}|+\frac{q}{2}\int_{{B_{2r}(x_0)}}\int_{{B_{R}(0)}}P(x,y)\,dxdy
		\]
		where 
		\[
                  P(x,y)=\frac{\psi(x)\psi(y)+2u(x)\psi(y)}{|x-y|}
		\]
        Recalling that $\|\psi\|_{L^\infty}\leq c$ and using Theorem~\ref{thm: pde and L infty estimate}, we can
                  control the nonlocal term as
                  \[
                    \int_{{B_{2r}(x_0)}}\int_{{B_{R}(0)}}P(x,y)\,dxdy\leq
                    C_1
                    \int_{{B_{2r}(x_0)}}\int_{{B_{R}(0)}}\frac{1}{|x-y|}\,dxdy\leq
                    C_2 {R^2} |B_{2r}(x_0)|\leq C_3r^3.
\] 
	Thus we obtain 
	\begin{equation}\label{eq:estgrad}
			\begin{split}
                          &- {2} \int_{{B_{2r}(x_0)}}\nabla u\cdot
                          \nabla \psi\, dx\leq
                          \int_{{B_{2r}(x_0)}}|\nabla \psi|^2\,
                          dx+C_\eta|\{u=0\}\cap
                          {B_{2r}(x_0)}|+{Cqr^3}.
			\end{split}
		\end{equation}
        	We now set, for all $\varphi\in     C^\infty_c({B_{2r}(x_0)})$,  $\psi={\pm} r^{3/2}\|\nabla  \varphi\|_{L^2}^{-1}\varphi$   and from~\eqref{eq:estgrad} we deduce, for some $\widetilde C>0$
		\[
		\Big|\int_{{B_{2r}(x_0)}}\nabla u\cdot \nabla \varphi\, dx\Big|\leq {\widetilde C} r^{3/2}\|\nabla \varphi\|_{L^2({B_{2r}(x_0)})}
		\]
		It is then enough to choose
                $\varphi\in C^\infty_c(B_{2r}{(x_0)})$ with
                $\varphi\geq 0$ and $\varphi=1$ in ${B_{r}(x_0)}$
                and with
                $\|\nabla \varphi\|_{L^\infty(B_{2r})}\leq \frac2r$
                (notice that this is compatible with the requirement
                $\|\psi\|_{L^\infty}\leq c$ {independently of
                  $r$}) to obtain, for some constant $C>0$:
		\begin{equation}\label{eq:estlapl}
                  |\Delta u|({B_r(x_0)})\leq|\Delta u|(\varphi)=
                  \left|\int_{{B_{2r}(x_0)}}\nabla u\cdot\nabla
                    \varphi\,dx\right| \leq C r^2. 
		\end{equation}

	\noindent{\it Step 2.} We prove that the Laplacian estimate~\eqref{eq:estlapl} of Step 1
		entails (recall that $\mathcal H^2(\partial B_r)=4\pi {r^2}$)
		\begin{equation}\label{eq:intest}
                  \frac{1}{4\pi {r^2}}\int_{\partial
                    {B_r(x_0})}u\,d\mathcal H^2\leq {u(x_0)}+ Cr\qquad
                  {\text{for all }x_0\in B_R}, 
		\end{equation}
		for some constant $C>0$. This follows from~\cite[Lemma~3.6]{BrianconHayouniPierre}, which assures that, for all $x_0\in B_R$, it holds
			\begin{equation}\label{eq:BHP3.6}
				\frac{1}{4\pi r^2}\int_{\partial B_r(x_0)}u\,d\mathcal H^2-u(x_0)=\int_0^r\frac{1}{4\pi s^2}\Delta u(B_s(x_0))\,ds.
			\end{equation} 
			It is then enough to put together~\eqref{eq:BHP3.6} and~\eqref{eq:estlapl} to obtain~\eqref{eq:intest}.
Now, let us take $x_0\in \partial \{u>0\}\cap B_R$ and a sequence of $x_n\to x_0$ such that $u(x_n)=0$ for all $n$ and with $x_n\in B_{r_1}(x_0)\subset B_R$ for some $r_1>0$. For those points~\eqref{eq:intest} reads as
\begin{equation}\label{eq:xn}
  \frac{1}{4\pi {r^2}}\int_{\partial
                    {B_r(x_n})}u\,d\mathcal H^2\leq u(x_n)+Cr=Cr,\qquad \text{for all }r< r_1,
\end{equation}
and the constant $C$ does not depend on $n$. 
Since $u\in H^1(B_R)$, the map $x\mapsto \tfrac{1}{4\pi {r^2}}\int_{\partial
                    {B_r(x})}u\,d\mathcal H^2$ is continuous, see~\cite[Remark~3.6]{VelichkovLecture}.
We can then pass to the limit as $n\to\infty$ in~\eqref{eq:xn} to deduce \[
\frac{1}{4\pi {r^2}}\int_{\partial
                    {B_r(x_0})}u\,d\mathcal H^2\leq Cr,\qquad \text{for all }r< r_1.
\]		
Finally, passing to the limit  as $r\to 0$, we obtain that $u(x_0)=0$ (recalling that we are considering the quasi continuous representative of the Sobolev function $u$), thus
                  $\Omega\cap \partial\Omega=\{u>0\}\cap
                  \partial\{u>0\}=\emptyset$, hence $\Omega=\{u>0\}$
                  is an open set. 
                  
       \noindent{\it Step 3.} By previous steps and \cite[Lemma 2.4]{GustafssonShahgholian}, it holds
       \[  |\nabla u(x_0)|\leq C\bigg[ \frac{1}{r}\mean{\partial B_r(x_0)}u(x)dx+ r^3\bigg]\leq C\]
       for a universal constant $C$.        
\end{proof}
	\begin{lemma}\label{le:deinsity estimate}
         There exists a universal constant $q_4\in (0, q_3]$ such that for all  $q\in (0,q_4 ]$, calling $\Om$ an optimal set for problem~\eqref{eq:min mathcal E q,M,eta} and $u$ a positive normalized function attaining $E_{q,M,\eta}(\Omega)$, there exist {positive} constants $\theta= \theta(\eta)$ and $\rho_0=\rho_0(\eta)<1$ such that for every $x_0\in \partial \Omega$ and every $\rho\leq \rho_0$, we  have
		\begin{equation}\label{eq:4.30}
			\theta\leq \frac{|\Omega\cap B_\rho(x_0)|}{|B_\rho|}{\leq (1-\theta)}.
		\end{equation}
	\end{lemma}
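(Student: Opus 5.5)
The two inequalities in \eqref{eq:4.30} are independent, and I would prove them separately following the standard one-phase scheme (as in \cite{AltCaffarelli, VelichkovLecture, MazzoleniMuratovRuffini}). Throughout I use that, for $q\le q_3$, $u\ge 0$ is Lipschitz with constant $L(\eta)$ (Lemma~\ref{le:lipschitz}), that $\Omega=\{u>0\}$ is open, and that $\|u\|_{L^\infty}$ and $\|v_u\|_{L^\infty}$ are bounded uniformly (Theorem~\ref{thm: pde and L infty estimate}).

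\emph{Lower bound.} Fix $x_0\in\partial\Omega$ and $\rho\le\rho_0$, and apply the non-degeneracy Lemma~\ref{le:non degeneracy} with $\kappa=1/2$. Since $x_0\in\partial\{u>0\}$, $u$ does not vanish identically on $B_{\rho/2}(x_0)$, so $\mean{\partial B_\rho(x_0)}u\,d\mathcal H^2> K_0\rho$. Hence there is $y\in\partial B_\rho(x_0)$ with $u(y)>K_0\rho$. The Lipschitz bound then gives $u>0$ on $B_{K_0\rho/L}(y)$. This ball meets $B_\rho(x_0)$ in a set of measure at least $c(K_0/L)^3|B_\rho|$ (after shrinking $K_0/L\le 1$). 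This yields $\theta$ depending only on $\eta$.

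\emph{Upper bound.} This is the main step. I would use the harmonic replacement competitor. Let $v$ solve $\Delta v=0$ in $B_\rho(x_0)$ with $v=u$ outside, so $v\ge0$ and $\{v>0\}\supseteq\{u>0\}\cup B_\rho(x_0)$. Test minimality of $u$ against $v$, estimating the extra terms exactly as in the proof of Proposition~\ref{prop: L^infty convergence}:
\begin{itemize}
\item the $L^2$ penalization and $q D$ differences cost at most $C\rho^3\|u\|_\infty$;
\item the volume term increases by at most $\frac1\eta|\{u=0\}\cap B_\rho(x_0)|$.
\end{itemize}
This gives
\[
\int_{B_\rho(x_0)}|\nabla(u-v)|^2\,dx\le \tfrac1\eta\,|\{u=0\}\cap B_\rho(x_0)|+C\rho^3 .
\]
The difficulty is that the $C\rho^3$ error is of the same order as the quantity we want to bound, so it must be absorbed. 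I would use the classical Alt--Caffarelli estimate (\cite[Lemma 3.2]{AltCaffarelli}):
\[
\int_{B_\rho}|\nabla(u-v)|^2\ge c\,\frac{|\{u=0\}\cap B_\rho|}{|B_\rho|}\Big(\frac1\rho\mean{\partial B_\rho}u\Big)^2|B_\rho|.
\]
By non-degeneracy, the average $\frac1\rho\mean{\partial B_\rho}u$ is at least $K_0$, but that is not enough by itself.

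Instead I would argue by contradiction on the positivity side. Suppose $|\{u=0\}\cap B_\rho(x_0)|\le\theta|B_\rho|$ with $\theta$ tiny. Then the Alt--Caffarelli-type inequality bounds $\int_{B_\rho}|\nabla(u-v)|^2$ by $\frac1\eta\theta|B_\rho|+C\rho^3$. Meanwhile $u(x_0)=0$, while $v(x_0)=\mean{\partial B_\rho}u\ge K_0\rho$. A Poincaré-type argument on $u-v\in H^1_0(B_\rho)$, combined with Lipschitz continuity of $u$ and interior smoothness of $v$, forces $u-v$ to be of size about $K_0\rho$ on a ball $B_{c\rho}(x_0)$. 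That gives $\int|\nabla(u-v)|^2\gtrsim K_0^2\rho^3$.

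This is incompatible once $\theta\ll\eta K_0^2$, provided the constant $C$ in the $C\rho^3$ term is smaller than $cK_0^2$. Arranging this is the delicate point: I expect to choose $q_4$ small so that the nonlocal contributions carry a factor $q$. The $M$ and $f_\eta$ contributions must be handled by making $\rho_0$ small, or by noting that they only enter through $|\{u=0\}\cap B_\rho|$ and $\rho^3\|u\|_\infty\sup_{B_\rho}u\lesssim \rho^4 L$, which becomes negligible as $\rho\to0$.
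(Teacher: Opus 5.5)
Your proof is correct and follows the standard Alt--Caffarelli density argument: non-degeneracy plus Lipschitz continuity for the lower bound, and for the upper bound a harmonic replacement $v$ with $v(x_0)$ equal to the mean of $u$ on $\partial B_\rho(x_0)$ (hence $>K_0\rho$), the interior gradient bound for $v$, and Poincar\'e's inequality; this is presumably the scheme behind \cite[Lemma 3.12]{MazzoleniMuratovRuffini}, to which the paper defers without reproducing the argument. Your ``delicate point'' does not actually arise: since $u(x_0)=0$ and $u$ is $L$-Lipschitz, $0\le u,v\le L\rho$ on $B_\rho(x_0)$, so the $M$-term and the nonlocal term $\frac q2\big(2\int_{B_\rho(x_0)} v_u(v-u)\,dx+D(v-u,v-u)\big)$ are both $O(\rho^4)$ and are absorbed by taking $\rho_0$ small, with no further smallness of $q$ needed in this step.
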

\begin{proof}
    The proof follows from \cite[Lemma 3.12]{MazzoleniMuratovRuffini}.
\end{proof}

\begin{lemma}\label{le: convergence hausdorff}
           For all $\delta>0$ there exists $q_\delta=q_\delta(\eta)\in(0,q_4]$ such that for all $q\leq q_\delta$, we have \[ {\rm dist}_H(\partial \Om_{{q,M,\eta}},\partial
            B_{{q,M,\eta}})\leq \delta.
		\] 
        where $\Om_{q,M,\eta}$ is an optimal domain for problem \eqref{eq: R q,M,eta in B_R}.
\end{lemma}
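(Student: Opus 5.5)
We argue by contradiction and compactness. Here $B_{q,M,\eta}$ denotes a unit ball realizing (up to translation) the Fraenkel asymmetry of $\Omega_q:=\Omega_{q,M,\eta}$; after translating, we take it to be $B_1$ centered at the origin. Suppose the claim fails. Then there exist $\delta>0$ and a sequence $q_n\to0$ with optimal sets $\Omega_n$ and $\mathrm{dist}_H(\partial\Omega_n,\partial B_1)>\delta$.

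\textbf{Step 1: $L^1$ closeness.} By Remark~\ref{rmk:etafissato}, $|\Omega_n|=|B_1|$. Theorem~\ref{thm:noconstraint} gives $E_{q_n}(\Omega_n)\le E_{q_n}(B_1)\to\lambda_0(B_1)$. Since $\lambda_0(\Omega_n)\le E_{q_n}(\Omega_n)$, the quantitative Faber--Krahn inequality (Theorem~\ref{thm:quantitativefk}) yields $|\Omega_n\Delta B_1|\to0$.

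\textbf{Step 2: uniform convergence of the state functions.} Let $u_n\ge0$ be the optimal functions, extended by zero to $B_R$. By Proposition~\ref{prop: L^infty convergence}, $u_n\to u_{B_1}$ uniformly in $B_R$. By Lemma~\ref{le:lipschitz}, the $u_n$ are also uniformly Lipschitz with constant $L(\eta)$, independent of $q$.

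\textbf{Step 3: pass from $L^1$ to Hausdorff closeness.} There are two ways the Hausdorff distance could stay large.

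\emph{A boundary point $x_n\in\partial\Omega_n$ with $\mathrm{dist}(x_n,\partial B_1)>\delta$.} Take $\rho=\min\{\delta/2,\rho_0\}$, so that $B_\rho(x_n)$ lies entirely inside $B_1$ or entirely outside it.
\begin{itemize}
\item If $B_\rho(x_n)$ lies outside $B_1$, the lower density bound of Lemma~\ref{le:deinsity estimate} gives $|\Omega_n\setminus B_1|\ge\theta|B_\rho|$, which contradicts Step 1.
\item If $B_\rho(x_n)$ lies inside $B_1$, the upper density bound gives $|B_1\setminus\Omega_n|\ge\theta|B_\rho|$, again a contradiction.
\end{itemize}
In the second case we could alternatively argue with Lipschitz continuity: $u_n(x_n)=0$, while $u_n\to u_{B_1}$ uniformly and $u_{B_1}\ge c(\delta)>0$ on $\{x\in B_1:\mathrm{dist}(x,\partial B_1)\ge\delta\}$.

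\emph{A point $y_n\in\partial B_1$ with $\mathrm{dist}(y_n,\partial\Omega_n)>\delta$.} Then $B_\delta(y_n)$ lies either entirely in $\Omega_n$ or entirely in its complement.
\begin{itemize}
\item If $B_\delta(y_n)\subset\Omega_n$, then $|\Omega_n\setminus B_1|\ge|B_\delta(y_n)\setminus B_1|\ge c\delta^3$, contradicting Step 1.
\item If $B_\delta(y_n)\cap\Omega_n=\emptyset$, then $|B_1\setminus\Omega_n|\ge c\delta^3$, again contradicting Step 1.
\end{itemize}

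\textbf{Main obstacle.} The key input is the uniformity of the constants $\theta(\eta)$ and $\rho_0(\eta)$ in Lemma~\ref{le:deinsity estimate}, and of the non-degeneracy constant, with respect to $q$. Because $\eta$ is fixed and these constants depend only on $\eta$, the argument goes through. A secondary care point is the choice of the centering ball: the Fraenkel-optimal ball may move with $n$, which is why we translated each $\Omega_n$ so that its optimal ball is $B_1$. This is harmless, since the problem is translation invariant.
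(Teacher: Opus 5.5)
Your proof is correct and follows the same route as the paper, which only sketches it by deferring to the analogous lemmas in the charge-qubit paper. In both, the quantitative Faber--Krahn inequality gives $L^1$-closeness to a ball, and the $q$-independent two-sided density estimates (valid for $q\le q_4$) upgrade this to Hausdorff closeness of the boundaries; your Step~2 on uniform convergence of $u_n$ is not needed for that argument.
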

\begin{proof}
    It is a trivial adaptation of \cite[Lemmas 3.14,3.15,3.16]{MazzoleniMuratovRuffini} hence we just sketch the proof. By the quantitative version of the Faber-Krahn inequality we first deduce that minimizers are close in $L^1-$topology to a ball for $q-$small. Such an $L^1-$closeness can then be improved into a proximity in Hausdorff distance by means of the uniform density estimates just obtained.
\end{proof}

\section{Optimality condition and improvements of flatness}\label{sec:improvementofflatness}
We show now the following result.
	\begin{theorem}\label{thm:agalcathm2}
          Let $q \in (0,q_4]$, {let} $\Omega$ be a minimizer of
          \eqref{eq: inf EqOmega}, and {let} $u$ be an optimal nonnegative function
          attaining $E_q(\Omega)$. Then we have that:
		\begin{enumerate}
			\item[(i)] There is a Borel function $\mu_{u}\colon \partial \Om \rightarrow \R$ such that, in the sense of the distributions, one has
			\begin{equation}\label{eq:bdp4.31}
				-\Delta u = \lambda_qu - q v_u - \mu_{u}\mathcal H^{2}\resmeas\partial\Omega,\qquad\text{ in }B_R.
			\end{equation}
			\item[(ii)]  There exist constants $0<c<C<+\infty$, depending on $R$,  such that $c\le \mu_{u}\le C$.
			\item[(iii)] For all points $\overline x\in \partial^*\Om =\partial^*\{u>0\}$, the measure theoretic inner unit normal $\nu_{u}(\overline x)$ is well defined and, as $\rho\to0$, 
			\begin{equation}\label{eq:bdp4.32}
				\frac{\Om-\overline x}{\rho}\rightarrow \{x : x\cdot \nu_{u}(\overline x)\geq 0\},\qquad \text{in }L^1(B_R).
			\end{equation}
			\item[(iv)]For $\mathcal H^{2}$ almost all $\overline x\in \partial^*\{u>0\}$ we have 
			\begin{equation}\label{eq:bdp4.33}
				\frac{u(\overline x+\rho x)}{\rho}\longrightarrow \mu_{u}(\overline x)(x\cdot \nu_{u}(\overline x))_+,\qquad \text{in }W^{1,p}(B_R)\;\text{for every }p\in[1,+\infty).
			\end{equation}
			\item[(v)] $\mathcal H^{2}(\partial \Om\setminus\partial^*\Om)=0$.
		\end{enumerate}
        Moreover $\mu_u:\partial \Omega\to \R$ is constant on $\partial^*\Omega$.
	\end{theorem}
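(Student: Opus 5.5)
The plan is to follow the scheme of Alt--Caffarelli as adapted in \cite{BrascoDePhilippisVelichkov, MazzoleniMuratovRuffini}, using that by Theorem~\ref{thm:noconstraint} and Remark~\ref{rmk:etafissato} the set $\Omega$ is also optimal for the unconstrained problem \eqref{eq: R q,M,eta in B_R}, and that $u\ge0$ is Lipschitz (Lemma~\ref{le:lipschitz}), nondegenerate (Lemma~\ref{le:non degeneracy}) and satisfies the density estimates of Lemma~\ref{le:deinsity estimate}.

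\emph{Step 1: items (i) and (ii).} Since $u$ is Lipschitz, the distribution $\Delta u+\lambda_q u-qv_u$ is a Radon measure by Step~1 of Lemma~\ref{le:lipschitz}. It is supported on $\partial\Omega$ because inside $\Omega$ the PDE \eqref{eq: PDE} holds, and it is nonnegative since $u\ge0$ is subharmonic up to bounded terms ($u$ is a positive part). Testing with $\varphi\ge0$ and using the Lipschitz bound yields the upper estimate $\Delta u(B_r(x_0))\le Cr^2$ at boundary points. Nondegeneracy combined with the mean value formula \eqref{eq:BHP3.6} yields the lower estimate $\Delta u(B_r(x_0))\ge cr^2$. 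The density estimates together with the relative isoperimetric inequality give $\mathcal H^2(\partial\Omega\cap B_r)\approx r^2$, so $\mathcal H^2\resmeas\partial\Omega$ is Ahlfors regular. Hence the measure is absolutely continuous with respect to $\mathcal H^2\resmeas\partial\Omega$, and its Radon--Nikodym density $\mu_u$ lies in $[c,C]$, as in \cite[Thm.~4.3]{AltCaffarelli}. Item (v) follows from Federer's criterion: by Theorem~\ref{thm:existenceEq,M,eta} the set $\Omega$ has finite perimeter, and by the density estimates every point of $\partial\Omega$ lies in the essential boundary. Item (iii) is De Giorgi's blow-up theorem at reduced boundary points.

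\emph{Step 2: item (iv).} Consider the blow-ups $u_\rho(x)=u(\overline x+\rho x)/\rho$. They are uniformly Lipschitz, so they converge locally uniformly, and weakly-$*$ in $W^{1,\infty}$, to some $u_0$. The right-hand side $\lambda_q u-qv_u$ rescales to $O(\rho)$, so $u_0$ is harmonic in $\{u_0>0\}$, which by (iii) and nondegeneracy is the half-space $\{x\cdot\nu>0\}$. Also $u_0=0$ on the complement. So $u_0=\alpha(x\cdot\nu)_+$ by Liouville for positive harmonic functions in a half-space vanishing on the boundary. The constant $\alpha$ is identified as $\mu_u(\overline x)$ at Lebesgue points of $\mu_u$ with respect to $\mathcal H^2\resmeas\partial^*\Omega$, by passing the measures to the limit. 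Strong $W^{1,p}$ convergence follows since $\nabla u_\rho\to\nabla u_0$ a.e. (harmonic convergence away from the plane) and the gradients are bounded.

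\emph{Step 3: $\mu_u$ constant.} This is the main obstacle. I would use inner variations $\Phi_t(x)=x+t\xi(x)$ with $\xi\in C^\infty_c(B_R;\R^3)$ and the competitor $u\circ\Phi_t^{-1}$ on $\Phi_t(\Omega)$, computing $\frac{d}{dt}\big|_{t=0}$ of the Dirichlet term, of $D$, of $M|\int u^2-1|$ (where the $L^2$ norm needs renormalization; the $|\cdot|$ is handled by the Lagrange structure of Proposition~\ref{prop:equivalence E_q and E_q,M}) and of $f_\eta$. The volume term is not differentiable at $|B_1|$, but one-sided derivatives $\eta$ and $1/\eta$ give two inequalities. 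The nonlocal term is smooth in $t$ since $v_u\in W^{2,6}$. Comparing with the distributional equation and using (iv) to evaluate the boundary integral at $\mathcal H^2$-a.e. point via blow-up, one expects to obtain $\int_{\partial^*\Omega}(\mu_u^2-\Lambda)\,\xi\cdot\nu\,d\mathcal H^2=0$ for a Lagrange constant $\Lambda\in[\eta,1/\eta]$. More cleanly, one can take the difference of two variations with opposite volume changes, or variations preserving volume, so that $f_\eta$ drops out. Since $\xi$ is arbitrary, $\mu_u^2$ is constant $\mathcal H^2$-a.e. on $\partial^*\Omega$, which gives the claim. The delicate point is justifying the derivative of $\int|\nabla(u\circ\Phi_t^{-1})|^2$ for merely Lipschitz $u$ and identifying the boundary term. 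For this I would follow \cite[Section~4]{BrascoDePhilippisVelichkov}, first proving the identity in the weak form $\int(|\nabla u|^2\operatorname{div}\xi-2\nabla u D\xi\nabla u)$, and then localizing it near reduced boundary points via (iii)--(iv).
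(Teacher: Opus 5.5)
Your Steps 1--2, for items (i)--(v), are the Alt--Caffarelli Section~4 argument that the paper simply invokes, so there you agree with it. Two small slips: the measure supported on $\partial\Omega$ is $\Delta u+(\lambda_q u-qv_u)\chi_\Omega$, because off $\overline\Omega$ one has $u\equiv0$ but $v_u>0$. Also, the relative isoperimetric inequality only gives the lower bound $\mathcal H^2(\partial\Omega\cap B_r)\ge cr^2$; the upper bound comes from $\Delta u(B_r)\ge cr^2$ plus a covering argument, as in Alt--Caffarelli.

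For the constancy of $\mu_u$ you take a genuinely different route. The paper argues by contradiction in the Aguilera--Alt--Caffarelli way. Given $x_0,x_1\in\partial^*\Omega$ with $\mu_u(x_0)<\mu_u(x_1)$, it pushes the boundary inward at one point and outward at the other with bumps of size $\kappa\rho$ in $B_\rho(x_i)$. The volume then changes only by $o(\rho^3)$, so $f_\eta$ never has to be differentiated. The Dirichlet energy changes by $\kappa\rho^3C(\varphi)\big(\mu_u^2(x_0)-\mu_u^2(x_1)\big)$, and the $L^2$ and Coulomb terms are shown to be lower order by direct estimates.

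You instead use arbitrary inner variations with the renormalized competitor $u\circ\Phi_t^{-1}/\|u\circ\Phi_t^{-1}\|_{L^2}$, which indeed disposes of the $M$-term. Call $\delta\mathcal G(\xi)$ the first variation of the smooth part: Dirichlet term, plus Coulomb term, minus $\lambda_q\int u^2\operatorname{div}\xi$. From the one-sided slopes $\eta$, $1/\eta$ of $f_\eta$ at $|\Omega|=|B_1|$ and linearity in $\xi$, you get a multiplier $\Lambda\in[\eta,1/\eta]$ with $\delta\mathcal G(\xi)=-\Lambda\int_\Omega\operatorname{div}\xi\,dx$ for all $\xi$. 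On volume-preserving directions $f_\eta$ only moves at order $t^2$, which is what makes this work. The argument is correct and gives more than the paper's:
\begin{itemize}
\item a global weak Euler--Lagrange identity;
\item the value $\mu_u^2=\Lambda$, with a bound in terms of $\eta$;
\item the exact formula $q\int u\,(\xi\cdot\nabla v_u+v_u\operatorname{div}\xi)\,dx$ for the Coulomb variation, which is cleaner than the paper's estimates of $D(\widetilde u_\rho,\widetilde u_\rho)-D(u,u)$.
\end{itemize}

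Your final sentence already points at the right mechanism, but you do not need the global identity $\int_{\partial^*\Omega}(\mu_u^2-\Lambda)\,\xi\cdot\nu\,d\mathcal H^2=0$. For merely Lipschitz $u$ it would require an extra strip/coarea argument to identify the boundary term. It suffices to test the weak identity with $\xi_r(x)=\psi\big((x-\overline x)/r\big)\,\nu(\overline x)$ at a point $\overline x$ where (iii)--(iv) hold. After rescaling:
\begin{itemize}
\item the Dirichlet part is $r^2\mu_u(\overline x)^2\int_{\{y\cdot\nu=0\}}\psi\,d\mathcal H^2+o(r^2)$, using strong convergence of the blow-ups;
\item the volume part is $-r^2\int_{\{y\cdot\nu=0\}}\psi\,d\mathcal H^2+o(r^2)$, by (iii);
\item the Coulomb and $\lambda_q$ terms are $O(r^3)$, since $u\le Lr$ on $B_r(\overline x)$.
\end{itemize}
Hence $\mu_u(\overline x)^2=\Lambda$. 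This is the same blow-up mechanism behind the paper's two-point argument, organized through a Lagrange multiplier rather than a contradiction.
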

    \begin{proof}
        	
		The proof is essentially identical to that in \cite[Section	4]{AltCaffarelli}. We only have to check that our hypotheses match with those in \cite{AltCaffarelli}.  First by Theorem \ref{thm: pde and L infty estimate}, $u$ satisfies
		\[
		-\Delta u-Q(x)=0 {\qquad \text{in} \ \mathcal D'(\Omega),} 
		\]
		where
		$Q=\lambda_qu- q v_u\in L^\infty(\Omega)$ and
		$u\in H^1_0(\Omega)$. Hence, by repeating the proof of \cite[Theorem 4.5]{AltCaffarelli} or by directly applying
		\cite[Proposition 2.3]{BucurMazzoleniPratelliVelichkov} one obtains that there exists a  positive Radon measure concentrated on $\partial\Omega$ that we denote $\mu_u\mathcal H^2\resmeas\partial\Omega$. Moreover, {thanks to the non-degeneracy, see \cite[Remark 2.8]{MazzoleniTTerraciniVelichkov2017}, and the Lipschtiz continuity of $u$} we have that there exist constant $C>c>0$ depending on $q$ and $R$ such that 
		\[
		c\le{\frac1r \mean{\partial B_r}{u\,d\mathcal H^2}}\le C.
		\]
		Hence we can work under the hypotheses of \cite[Theorem 4.5]{AltCaffarelli} so that $\mu_u$ is a density of a Radon measure on $\partial\Omega$ and, denoting still with $\mu_u$ the function defininig it, $\mu_u$ satisfies $(i)-(v)$.

            Let us now prove that $\mu_u$ is constant on $\partial^*\Omega$.  The proof follows the path of \cite[Theorem 6.5]{MazzoleniRuffini}, in turn inspired by \cite{AguileraAltCaffarelli}. 
		Due to the nonlocal term, we will have to perform some new and non-straightforward computations.
		
		We reason by contradiction and we assume that  there exists $x_0,x_1\in\partial^*\Omega$ such that
		\[
		\mu_u(x_0) < \mu_u(x_1).
		\] 
		Then we construct a family of volume preserving diffeomorphisms  as follows: let $\kappa<1$ and $\rho<1$  and let $\varphi\in C^1_0(B_1(0))$ be a non-null, radially symmetric function supported in $B_1(0)$. We define 
		\[
		\tau_{\rho,\kappa}(x)=\tau(x)=
		x+\sum_{i\in\{0,1\}}(-1)^i\kappa\rho\varphi\left(\frac{|x-x_i|}{\rho}\right)\nu_{x_i} \chi_{B_\rho(x_i)},
		\]
		where $\nu_{x_i}$ are the measure theoretic inner normals to $\partial^*\Omega$ at $x_i$, $i=1,2$.
		
		It is easy to notice that $\tau$ is indeed a diffeomorphism for $\rho$ and $\kappa$ small enough and that   $\tau(x)-x$ vanishes outside $B_\rho(x_0)\cup B_\rho(x_1)$. Moreover we have:
		\begin{equation}\label{eq:dettau}
			\nabla\tau(x)=Id+\sum_{i\in\{0,1\}}(-1)^i\kappa\varphi'\left(\frac{|x-x_i|}{\rho}\right)\frac{x-x_i}{|x-x_i|}\otimes\nu_{x_i} \chi_{B_\rho(x_i)}, 
		\end{equation}
		so that\footnote{We are using the formula $\det(Id+\xi A)=1+   trace(A)\xi+o(\xi)$ for a matrix $A\in\R^{3\times 3}$.}
		\begin{equation}\label{eq:espdet}
			\det(\nabla\tau(x)) = 1+ \sum_{i\in\{0,1\}}(-1)^i\kappa\varphi'\left(\frac{|x-x_i|}{\rho}\right)\frac{x-x_i}{|x-x_i|}\cdot\nu_{x_i} \chi_{B_\rho(x_i)}+o(\kappa).
		\end{equation}
		We call $\Omega_\rho =\tau(\Omega)$. We aim to show that for  $\kappa,\rho$ small enough we obtain a contradiction with the minimality of $\Omega$. To do that, we deal with the first variation of each term of the sum defining $E_{q,M,\eta}$, see Remark~\ref{rmk:etafissato} for the choice of $\eta$. We stress that the computations regarding the volume and the Dirichlet energy contributions are identical to those performed originally in \cite{AguileraAltCaffarelli} (see also \cite{BrascoDePhilippisVelichkov} and \cite{DePhilippisMariniMukoseeva}, where the same idea is applied). Moreover, exactly as in the proof of \cite[Theorem 6.5]{MazzoleniRuffini} one obtains that 
		\begin{equation}\label{variazionevolume}
			f_\eta(\Omega_\rho)-f_\eta(\Omega)= o(\rho^3),\qquad \text{as }\rho\rightarrow 0,
		\end{equation}
		and that
		\begin{equation}\label{eq:variazionetorsione}
			\frac{1}{\rho^3} \Big(\int_{\Omega_\rho}|\nabla \widetilde u_\rho|^2\, dx-\int_\Omega |\nabla u|^2\, dx\Big)\le\kappa{ (\mu_u^2(x_0)-\mu_u^2(x_1))}C(\varphi) + o_\rho(1)+o(\kappa),
		\end{equation}
		where $u$ is the function attaining $E_q(\Omega)=E_{q,M,\eta}(\Omega)$, $\widetilde u_\rho=u \circ\tau^{-1}$ and 
		\begin{equation}\label{rfk}
			C(\varphi)=\int_{B_1(0)\cap\{y\cdot\nu=0\}}\varphi(|y|)\,d\mathcal H^{2}(y)=-\int_{B_1(0)\cap \{y\cdot \nu>0\}}\varphi'(|y|)\frac{y\cdot \nu}{|y|}\,dy,
		\end{equation}
		with the last equality that follows from the Divergence Theorem, recalling that $\nu$ is a inner normal and $${\mathrm{div}}(\varphi(|y|)\nu)=\varphi'(|y|)\frac{y\cdot \nu}{|y|}.$$
		Notice also that by the radial symmetry of $\varphi$  the value of $C(\varphi)$ is not affected by the choice of $\nu$.
		Moreover 
    \begin{align}\label{eq:variazione norma L^2}
 \frac{1}{\rho^3} \Bigg|\int_{\Omega_\rho} \widetilde u^2_\rho -\int_\Omega u^2 \Bigg|=& \frac{1}{\rho^3} \Bigg|\int_\Om (u^2 \text{ det}\nabla\tau -u^2)dx \Bigg|    \\
 =& \Bigg| \sum_{i\in \{0,1\}} \int_{B_1(0)\cap \left(\frac{\Omega-x_i }{\rho}\right)} u^2(x_i+\rho y) \text{ det}\nabla\tau(x_i+\rho y) -u^2(x_i+\rho y)dy\Bigg| \\
 =& \Bigg| \sum_{i\in \{0,1\}} \int_{B_1(0)\cap \left(\frac{\Omega-x_i }{\rho}\right)} (-1)^{i}\kappa\varphi'(|y|)\frac{y}{|y|} \frac{u^2(x_i+\rho y)}{\rho^2}\rho^2dy\Bigg|+o(\kappa)\\ =& o(\rho)+o(\kappa)
    \end{align}
    where we performed the change of variable $x=x_i+\rho y$, we used \eqref{eq:espdet} and Theorem \ref{thm:agalcathm2} points (\textit{iii}) and (\textit{iv}).    
    
    We are left to compute the variation of the nonlocal term $D(\cdot,\cdot)$. We claim that 
    \begin{equation}\label{eq:variazione D(u,u)}
        \frac{1}{\rho^3} (D(u_\rho,u_\rho)-D(u,u)=o(\kappa)+o_\rho(1).        
    \end{equation}
    Once \eqref{eq:variazione D(u,u)} is proved, the conclusion follows: by minimality of $\Om$, \eqref{eq:variazionetorsione}, \eqref{eq:variazione norma L^2} and \eqref{eq:variazione D(u,u)} (recalling also that $E_{q,M,\eta}(\Omega_\rho)\leq E_{q,M,\eta}(\Omega_\rho,\widetilde u _\rho)$) it holds
    \begin{align}
        0\leq & E_{q,M,\eta}(\Omega_\rho)-E_{q,M,\eta}(\Omega)\\
        \leq & \kappa\rho^3C(\varphi) \Bigg( (\mu_u(x_0))^2-(\mu_u(x_1))^2 \Bigg)+o(\rho^3)+\rho^3o(\kappa).
    \end{align}
    Since we suppose $(\mu_u(x_0))^2-(\mu_u(x_1))^2 <0$, then we obtain a contradiction as soon as $\rho$ and $\kappa$ are small enough.

    It remains to show \eqref{eq:variazione D(u,u)}. Setting $w(x):=v_{u}(x)u(x)$ and $\widetilde w(x)=v_{\widetilde u_\rho}(x) \widetilde u_\rho (x)$ and recalling \eqref{eq:espdet}, it holds
    \begin{align}\label{eq: variazione D(u,u), 1}
        \frac{1}{\rho^3}\Bigg( &D(u_\rho,u_\rho)- D(u,u)\Bigg)=\frac{1}{\rho^3}\Bigg( \int_{\Om_\rho}\widetilde w  - \int_\Om w \Bigg)= \frac{1}{\rho^3}\Bigg( \int_{\Om}\widetilde w (\tau(x))\text{ det}\nabla\tau(x)  -  w(x)dx  \Bigg)
        \\ \leq &\frac{1}{\rho^3} \int_{\Om}\widetilde (w\circ \tau - w)dx+ \frac{1}{\rho^3}\int_\Om \Bigg( \sum_{i\in \{ 0,1\}} (-1)^i\kappa \varphi'\Big(\frac{|x-x_i|}{\rho}\Big)\frac{x-x_i}{|x-x_i|}\cdot \nu_{x_i} \chi_{B_\rho(x_i)}+o(\kappa) \Bigg)\widetilde w\circ \tau dx.
    \end{align}
    Recalling that $v_{\widetilde u_\rho}$ is uniformly bounded (Theorem~\ref{thm: pde and L infty estimate}) and $\widetilde u_\rho$ is Lipschitz continuous (Lemma~\ref{le:lipschitz}), then $|\widetilde w(\tau(x))|\leq C \rho$ in $\Omega\cap B_{\rho}(x_i)$ since $\widetilde u_\rho(\tau(x_i))=u(x_i)=0$ as $x_i\in \partial^*\Om$, for some universal constant $C>0$. This implies that
    \begin{align}\label{eq: variazione D(u,u), 2}
\Bigg|  \frac{1}{\rho^3}&\int_\Om \Bigg( \sum_{i\in \{ 0,1\}} (-1)^i\kappa \varphi'\Big(\frac{|x-x_i|}{\rho}\Big)\frac{x-x_i}{|x-x_i|}\cdot \nu_{x_i} \chi_{B_\rho(x_i)} \Bigg)\widetilde w\circ \tau dx\Bigg|\leq \frac{C}{\rho^2}|B_\rho|=o_\rho(1).
    \end{align}
    Moreover 
    \begin{align}
        \frac{1}{\rho^3} \int_{\Om}\widetilde w\circ \tau - w\, dx=&\frac{1}{\rho^3} \int_\Om u(x)\Bigg( \int_{\Om_\rho} \frac{\widetilde u_\rho(y)}{|\tau (x)-y|}dy-\int_\Om \frac{u(y)}{|x-y|}dy \Bigg)dx\\=&\frac{1}{\rho^3} \int_\Om u(x)\Bigg( \int_{\Om} \frac{ u(y)}{|\tau (x)-\tau(y)|}\text{ det}\nabla\tau dy-\int_\Om \frac{u(y)}{|x-y|}dy \Bigg)dx
        \\ =& \frac{1}{\rho^3}\int_\Om u(x)\int_{\Omega\cap (B_\rho(x_0)\cup B_\rho(x_1))}u(y)\Bigg( \frac{1}{|\tau(x)-\tau(y)|}-\frac{1}{|x-y|}\Bigg)dydx\\
        &+\frac{1}{\rho^3}\int_\Om u(x)\int_{\Omega} \frac{u(y)}{|\tau(x)-\tau(y)|}\Bigg( \sum_{i\in \{ 0,1\}} (-1)^i\kappa \varphi'\Big(\frac{|y-x_i|}{\rho}\Big)\frac{y-x_i}{|x-x_i|}\cdot \nu_{x_i} \chi_{B_\rho(x_i)}+o(\kappa)  \Bigg)dydx
    \end{align}
    By computations analogous to the ones in \eqref{eq: variazione D(u,u), 2}, it holds 
    \begin{align}
        \Bigg| \frac{1}{\rho^3}\int_\Om u(x)\int_{\Omega}& \frac{u(y)}{|\tau(x)-\tau(y)|}\Bigg( \sum_{i\in \{ 0,1\}} (-1)^i\kappa \varphi'\Big(\frac{|y-x_i|}{\rho}\Big)\frac{y-x_i}{|x-x_i|}\cdot \nu_{x_i} \chi_{B_\rho(x_i)}  \Bigg)dydx \Bigg|\\ \leq& \frac{1}{\rho^3} \int_\Om u(x)\int_{\Omega\cap (B_\rho(x_0)\cup B_\rho(x_1))} \frac{u(y)}{|\tau(x)-\tau(y)|}\leq \frac{C}{\rho^2}\int_\Om \int_{B_\rho(x_0)\cup B_\rho(x_1)} \frac{1}{|\tau(x)-\tau(y)|}dydx\\\leq& \frac{C}{\rho^2}\int_{\Omega_\rho}\int_{B_\rho(x_0)\cup B_\rho(x_1)}\frac{1}{|x-y|}\leq \frac{C}{\rho^2}|B_\rho|=o_\rho(1)
    \end{align}
    where we used Theorem \ref{thm: pde and L infty estimate} and \cite[Lemma 2.4]{FuscoPratelli}. Eventually we obtain

    \begin{align}\label{eq: variazione D(u,u), 3}
        \Bigg| \frac{1}{\rho^3} \int_{\Om}\widetilde w\circ \tau - wdx\Bigg|\leq& \frac{1}{\rho^3}\Bigg|\int_\Om u(x)\int_{\Omega\cap (B_\rho(x_0)\cup B_\rho(x_1))}u(y)\Bigg( \frac{1}{|\tau(x)-\tau(y)|}-\frac{1}{|x-y|}\Bigg)dydx \Bigg|+o_\rho(1)
        \\ \leq& \frac{C}{\rho^2}\int_\Om \int_{ B_\rho(x_0)\cup B_\rho(x_1)}\Bigg( \frac{1}{|\tau(x)-\tau(y)|}+\frac{1}{|x-y|}\Bigg)dydx +o_\rho(1)=o_\rho(1)
    \end{align}
    where the last equality holds again by \cite[Lemma 2.4]{FuscoPratelli}. By \eqref{eq: variazione D(u,u), 1}, \eqref{eq: variazione D(u,u), 2} and \eqref{eq: variazione D(u,u), 3} we deduce \eqref{eq:variazione D(u,u)} and this concludes the proof.
    \end{proof}

	We are now in position to show $C^{2,\alpha}-$regularity of   the boundary of a minimizer $\Om$. This can be done in two steps: first one shows that such a boundary is locally the graph of a $C^{2,\alpha}$ function defined on the boundary of a ball. To do that one exploits the improvement of flatness technique from~\cite[Section~7 and~8]{AltCaffarelli}, readapted with minimal changes to our setting with a right hand side as in~\cite[Appendix]{GustafssonShahgholian}. Then, as we already know by the previous section that the boundary of $\Om$ is close in Hausdorff distance to that of a ball, we obtain that the local parametrization is a global parametrization of class $C^{2,\alpha}$ on the boundary of the ball.
	{We first need a definition (see~\cite[Definition~7.1]{AltCaffarelli}).
		\begin{definition}
			Let $\gamma_\pm\in(0,1]$ and $k>0$. A weak solution $u$ of~\eqref{eq:bdp4.31} is of class $F(\gamma_-,\gamma_+,k)$ in $B_\rho(x_0)$ with respect to direction $\nu\in \mathbb{S}^{N-1}$ if 
			\begin{enumerate}
				\item[(a)]$x_0\in \partial \{u>0\}$ and \[
				\begin{split}
					u=0,\qquad &\text{for }(x-x_0)\cdot \nu\leq -\gamma_-\rho,\quad x\in B_\rho(x_0),\\
					u(x)\geq \mu_{u}(x_0)[(x-x_0)\cdot \nu-\gamma_+\rho],\qquad &\text{for }(x-x_0)\cdot \nu\geq \gamma_+\rho,\quad x\in B_\rho(x_0).
				\end{split}
				\]
				\item[(b)] $|\nabla u(x_0)|\leq \mu_u(x_0)(1+k)$ in $B_\rho(x_0)$ and ${\rm osc}_{B_{\rho}(x_0)}\mu_u\leq k\mu_u(x_0)$.
			\end{enumerate} 
		\end{definition}
		We note that when $k=+\infty$, then condition $(b)$ is automatically satisfied.}
	We can show the following result.
	\begin{theorem}\label{thm:bdvthm4.18}
          Let $q\in (0,q_4]$, $\Omega$ be an optimal set
          for~\eqref{eq: inf EqOmega}, and $u$ a function attaining
          $E_q(\Omega)$ and a weak solution to~\eqref{eq:bdp4.31} in
          $B_R$.
		Then there
		are constants $\overline \gamma$ and $\overline k$, depending only on
		$R$, $ \mu_u$, such that
		if $u$ is of class $F(\gamma,1,+\infty)$ in $B_{4\rho}(x_0)$ with respect to some direction $\nu\in \mathbb{S}^{N-1}$ with $\gamma\leq \overline \gamma$ and $\rho\leq \overline k \gamma^2$,
		then there exists a $C^{2, \alpha}$ function
		$f\colon \R^{2}\to \R$ with
		$\|f\|_{C^{2,\alpha}}\leq C(R, \mu_u)$ such that,
		calling
		\[ {\rm graph}_\nu f:=\{x\in \R^3 : x\cdot \nu
		=f(x-(x\cdot\nu)\nu)\},
		\]
		then \[
		\partial \{u>0\}\cap B_\rho(x_0)=(x_0+{\rm graph}_\nu(f))\cap B_\rho(x_0).
		\]
		Moreover for all $\eps_0>0$ there exists $q_\eps\in
                (0,q_4 ]$ such that if $q<q_\eps$ then
		\[
		\partial \{u>0\}=\left\{\left(r+\varphi\left(\tfrac{x}{|x|}\right)\right)\frac{x}{|x|}\,:\, x\in\partial B_r  \right\}
		\]
		where $\varphi\colon \partial B_1\to \R$ is a function with the same regularity of $f$ and $\|\varphi\|_{C^{2,\alpha}}\leq \eps_0$.
	\end{theorem}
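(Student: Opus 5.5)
The proof has two parts: a local $C^{1,\alpha}$ and then $C^{2,\alpha}$ regularity statement near flat free boundary points, and a global parametrization over $\partial B_1$.

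\textbf{Local regularity.} I would rerun the improvement of flatness scheme of \cite[Sections~7--8]{AltCaffarelli}, using the version with a bounded right-hand side from \cite[Appendix]{GustafssonShahgholian}. The inputs are already available.
\begin{itemize}
\item By Theorem~\ref{thm:agalcathm2}, $u$ solves \eqref{eq:bdp4.31} with bounded source $Q=\lambda_q u-qv_u$ (Theorem~\ref{thm: pde and L infty estimate}) and free boundary density $\mu_u$ with $c\le\mu_u\le C$.
\item $\mu_u$ is constant on $\partial^*\Omega$, and $\mathcal H^2(\partial\Omega\setminus\partial^*\Omega)=0$. Hence $\mathrm{osc}\,\mu_u=0$, and the flatness class can be taken with $k$ controlled only by the gradient bound.
\item Lipschitz continuity (Lemma~\ref{le:lipschitz}), non-degeneracy (Lemma~\ref{le:non degeneracy}) and density estimates (Lemma~\ref{le:deinsity estimate}) provide the compactness needed in the blow-up argument.
\end{itemize}

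The key step is the analogue of \cite[Lemma~7.2 and Theorem~8.1]{AltCaffarelli}. If $u\in F(\gamma,1,+\infty)$ in $B_{4\rho}(x_0)$ with $\rho\le\overline k\gamma^2$, then $u\in F(\gamma,\gamma,C\gamma^2)$ in a smaller ball. Here the smallness of $\rho$ absorbs the error $\|Q\|_\infty\rho$ coming from the source after rescaling $u_\rho(x)=u(x_0+\rho x)/\rho$. From this point, iterating the flatness improvement with geometric decay gives $C^{1,\alpha}$ regularity of $\partial\{u>0\}\cap B_\rho(x_0)$.

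To upgrade to $C^{2,\alpha}$, I would use that $|\nabla u|=\mu_u$ is constant on the $C^{1,\alpha}$ boundary and that $-\Delta u=Q$ with $Q\in C^{0,\alpha}$; indeed $v_u\in W^{2,6}$ and $u$ is Lipschitz. Boundary Schauder estimates then give $u\in C^{1,\alpha}$ up to the boundary. The standard hodograph transform, or the Kinderlehrer--Nirenberg argument, turns the overdetermined condition into a nonlinear oblique problem, and one gets $\partial\Omega\in C^{2,\alpha}$ with norm depending on $R$ and $\mu_u$.

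\textbf{Global parametrization.} Fix $\eps_0$. By Lemma~\ref{le: convergence hausdorff}, $\mathrm{dist}_H(\partial\Omega_q,\partial B_1)\le\delta$ for $q$ small. Combined with the density estimates and $L^1$-closeness (quantitative Faber--Krahn, Theorem~\ref{thm:quantitativefk}), this should give flatness at every boundary point at a fixed scale $4\rho$. Concretely, $u\in F(\gamma,1,+\infty)$ in $B_{4\rho}(x_0)$ with $\nu=-x_0/|x_0|$, where $\gamma\to0$ as $\delta\to 0$.

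I expect this flatness derivation to be the main obstacle. The plan is to argue by compactness: suppose flatness fails along $q_n\to0$. Using the uniform convergence $u_q\to u_{B_1}$ of Proposition~\ref{prop: L^infty convergence} and the Lipschitz bound, the functions converge to $u_{B_1}$, whose free boundary is the smooth sphere. Non-degeneracy then forces the free boundaries to converge in Hausdorff distance, so both conditions in (a) hold for large $n$, which is a contradiction. This needs a choice $\rho\le\overline k\gamma^2$ compatible with the fixed scale, which is why I would fix $\gamma=\overline\gamma$ and $\rho$ first and only then take $q$ small.

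The local graphs then have uniform $C^{2,\alpha}$ bounds and are Hausdorff close to $\partial B_1$. Interpolating between $C^{2,\alpha}$ and $C^0$ smallness, they are $C^{2,\beta}$-close to the sphere's local graphs. Patching them via radial projection gives $\varphi$ on $\partial B_1$ with $\|\varphi\|_{C^{2,\alpha}}\le\eps_0$ for $q<q_\eps$, possibly after lowering $\alpha$. By Theorem~\ref{thm:noconstraint}, this applies to every minimizer of \eqref{eq: inf EqOmega}.
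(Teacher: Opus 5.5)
Your proposal is correct and follows the same route as the paper. The paper defers to the Alt--Caffarelli improvement-of-flatness scheme with a bounded right-hand side as in Gustafsson--Shahgholian (via Mazzoleni--Ruffini, where the only extra requirement, $\mu_u\in C^{1,\alpha}$, is trivial because $\mu_u$ is constant), and then globalizes using the Hausdorff closeness of Lemma~\ref{le: convergence hausdorff}. The step you single out as the main obstacle is actually immediate and needs no compactness. The class $F(\gamma,1,+\infty)$ imposes only the one-sided condition, and ${\rm dist}_H(\partial\Omega,\partial B_1)\le\delta$ forces $\Omega\subset\overline{B_{1+\delta}}$ and $|x_0|\ge 1-\delta$. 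Hence every $x\in B_{4\rho}(x_0)$ with $(x-x_0)\cdot\nu\le-4\overline\gamma\rho$, where $\nu=-x_0/|x_0|$, satisfies $|x|\ge|x_0|+4\overline\gamma\rho>1+\delta$ as soon as $\delta<2\overline\gamma\rho$, so $u(x)=0$ there.
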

	We omit the proof which is identical to that in \cite[Theorems 1.2 and
	6.8]{MazzoleniRuffini} (which is in turn inspired by~\cite[Theorem~8.1]{AltCaffarelli} and~\cite[Theorem~2.17 and Appendix]{GustafssonShahgholian}). {We note that in our setting $\mu_u$ is constant, thus the requirement to be $C^{1,\alpha}$ regular is trivially satisfied.} 
	
	We are now in position to prove Theorem~\ref{thm:main}.
	\begin{proof}[Proof of Theorem~\ref{thm:main}]
		The existence of a minimizer follows from Theorem~\ref{thm:existenceEq,M,eta} and Theorem~\ref{thm:noconstraint}.
		On the other hand, the fact that any optimal set is $C^{2,\alpha}$ nearly spherical follows from Theorem~\ref{thm:bdvthm4.18}.
	\end{proof}

\appendix
\section{Remarks on $D(\cdot,\cdot)$}\label{appendix}
In this Appendix we show how the minimization of the nonlocal term $D(\cdot,\cdot)$ alone is ill posed. 
\begin{lemma}\label{le: inf D(u,u) non attained}
We have that
	\[\inf\{D(u,u):\; u\in L^2(\R^3), \;||u||_{L^2(\R^3)}=1\}=0 \]
and it is not attained.
\begin{proof}
Let us show the existence of a sequence $(\phi_\eps)\subset L^2(\R^3)$ such that $||\phi_\eps||_{L^2(\R^3)}=1$ and $D(\phi_\eps,\phi_\eps)\to 0$ as $\eps \to 0$.  Let $\varphi\in C_c^\infty(\R^3)$, $\varphi\geq 0$, $\Omega:=\supp (\phi) \subset B(0,1)$ and $||\phi||_{L^2(\Om)}=1$. Let $\eps>0$, then we define 
\[ \phi_\eps(x):=\eps^{-3/2}\phi\Big(\frac{x-x_0}{\eps}\Big),\]
so $||\phi_\eps||_{L^2(\R^3)}=1$ and $\phi_\eps\in L^2(\R^3)$. Evaluating the energy we obtain 
\[ D(\phi_\eps,\phi_\eps)=\eps^2\int_\Om\int_\Om \frac{\phi(x)\phi(y)}{|x-y|}\; dxdy.\]
Thus as $\eps\to 0$,  $D(\phi_\eps,\phi_\eps) \to 0$, so we proved that the infimum under study is equal to zero.

To show that the infimum is not attained, it is enough to notice that any admissible $\tilde{u}\in L^2(\R^3)$ must be $\tilde u\ne0$ by the $L^2$ constraint. Hence $D(\tilde u,\tilde u)>0$ by Lemma~\ref{le:positivityD(u,u)}. 
\end{proof}
\end{lemma}
\begin{remark}
 Let $\Omega$ be a (quasi-)open set in $\R^3$ such that there exists a point $x_0$ and a radius $r>0$ for which $B_r(x)\subseteq \Omega$, then by the same proof of Lemma \ref{le: inf D(u,u) non attained} we deduce that 
	\[\inf\{D(u,u):\; u\in \H(\Om), ||u||_{L^2(\Omega)}=1\} = \inf\{D(u,u):\; u\in \H(\Om), ||u||_{L^2(\Omega)}=1,\; u\geq 0\}=0.\]
\end{remark}
Lemma~\ref{le: inf D(u,u) non attained} does not really show the homogenization phenomenon that we expect, essentially because of scaling properties of the functional. We can impose an additional $L^\infty$ bound to see the importance of considering sign-changing functions.
\begin{lemma} For all $L\geq 1$, then
\[ \inf_{\substack{\Omega \subseteq \R^3, \, \text{quasi-open}\\ |\Omega|=|B_1|}} \inf\left\{ D(u,u): u\in \H(\Omega), \int_{\Omega}u^2dx=1, ||u||_{L^\infty(\Omega)}\leq L\right\}=0.\]
\end{lemma}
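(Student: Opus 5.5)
We will build an explicit sequence of admissible pairs $(\Omega_n,u_n)$ with $|\Omega_n|=|B_1|$, $\|u_n\|_{L^2}=1$, $\|u_n\|_{L^\infty}\le L$ and $D(u_n,u_n)\to0$. Since $D\ge 0$ by Lemma~\ref{le:positivityD(u,u)}, this gives the claim. The scaling trick of Lemma~\ref{le: inf D(u,u) non attained} is not available here. Concentrating mass forces $\|u\|_{L^\infty}\to\infty$, and the volume is fixed. The mechanism must therefore be \emph{oscillation}: sign-changing functions whose positive and negative parts cancel at large scales.

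\textbf{Construction.} Fix $\Omega=B_1$ for every $n$. Take a fixed $\psi\in C_c^\infty(B_1)$ with $0\le\psi\le 1$ and $\int\psi^2\ge c_0>0$. Set
\[
u_n(x)=a\,\psi(x)\sin(n x_1),
\]
with $a$ chosen so that $\|u_n\|_{L^2}=1$. As $n\to\infty$,
\[
\int\psi^2\sin^2(nx_1)\,dx\to\tfrac12\int\psi^2,
\]
so $a$ stays bounded, $a\to(\tfrac12\int\psi^2)^{-1/2}$. The bound $\|u_n\|_\infty\le a$ must be at most $L$, and $L\ge1$ only allows $a\le L$. So choose $\psi$ close to $\chi_{B_1}$, which makes $\frac12\int\psi^2$ close to $|B_1|/2=2\pi/3>1$. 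Then $a<1\le L$ for large $n$. Each $u_n\in H^1_0(B_1)$, so the pair is admissible.

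\textbf{Decay of the Coulomb energy.} Use the Fourier representation
\[
D(u,u)=c\int_{\R^3}\frac{|\hat u(\xi)|^2}{|\xi|^2}\,d\xi .
\]
We have $\hat u_n(\xi)=\frac{a}{2i}\big(\hat\psi(\xi-ne_1)-\hat\psi(\xi+ne_1)\big)$. Since $\hat\psi$ is Schwartz, the mass of $|\hat u_n|^2$ concentrates near $\pm ne_1$, where $|\xi|^{-2}\approx n^{-2}$. Split the integral into the regions $|\xi|\ge n/2$ and $|\xi|<n/2$.
\begin{itemize}
\item On $|\xi|\ge n/2$, the contribution is at most $4n^{-2}\|\hat u_n\|_{L^2}^2=O(n^{-2})$.
\item On $|\xi|<n/2$, we have $|\xi\mp ne_1|\ge n/2$, so $|\hat u_n(\xi)|\le C_k n^{-k}$ for every $k$. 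The singularity $|\xi|^{-2}$ is integrable in $\R^3$, so this part is $O(n^{-k})$.
\end{itemize}
Hence $D(u_n,u_n)\to0$.

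\textbf{Main obstacle.} The delicate step is matching the normalization with the $L^\infty$ bound $L\ge1$, i.e.\ keeping the amplitude $a\le L$. This is handled by taking the cutoff $\psi$ close to $\chi_{B_1}$, using that $|B_1|/2>1$.

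If one prefers to avoid relying on this numerical margin, use a bounded square-wave profile instead of a sine. Set $u_n=b\,\mathrm{sgn}(\sin(nx_1))$ on $B_1$, mollified near the zero planes and near $\partial B_1$, with $b\approx|B_1|^{-1/2}<1$. The Coulomb energy can then be estimated in real space: $v_{u_n}$ solves $-\Delta v=4\pi u_n$, and $u_n$ has zero average on slabs of width $2\pi/n$. This gives $\|v_{u_n}\|_\infty=O(n^{-1})$ and hence $D=\int u_nv_{u_n}=O(n^{-1})$.
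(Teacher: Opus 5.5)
Your argument is correct, but it takes a different route from the paper's.

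\textbf{The paper's construction.} The paper lets the domain vary. It takes $\Omega_n$ to be a union of $2n$ disjoint balls of radius $\eps=(2n)^{-1/3}$, so that $|\Omega_n|=|B_1|$. On each ball it places a rescaled copy $\eps^{-3/2}\phi((x-x_i)/\eps)$ of a fixed bump, with weight $\pm(2n)^{-1/2}$. This normalization gives $L^2$ norm exactly $1$ and $L^\infty$ norm at most $1$, with no numerical margin needed. Each bump's self-interaction scales like $\eps^2 D(\phi,\phi)$, so the diagonal part totals $(2n)^{-2/3}D(\phi,\phi)\to 0$. The cross terms are killed by sending the centres apart. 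The mechanism is fragmentation and dispersion, and the signs play no essential role: the same estimate works with all plus signs. It relies on the freedom in the outer infimum to choose disconnected, spread-out sets.

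\textbf{Your construction.} You keep $\Omega=B_1$ fixed and let high-frequency oscillation do the work. The Fourier representation then gives $D(u_n,u_n)=O(n^{-2})$. The price is the numerical margin $|B_1|/2>1$ needed to keep the amplitude below $L=1$, which you handle correctly.

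\textbf{What each route buys.} Your route yields a stronger and more telling statement. The inner infimum already vanishes on the single bounded domain $B_1$, and indeed on any open set of measure $|B_1|$ if you take $\psi$ close to its characteristic function. On a fixed bounded domain, sign changes are unavoidable. For $u\ge 0$ with $\|u\|_{L^\infty}\le L$ and $\|u\|_{L^2}=1$, one has $\int u\ge 1/L$, hence $D(u,u)\ge (\int u)^2/\mathrm{diam}(\Omega)>0$. So your construction genuinely exhibits the homogenizing cancellation the appendix wants to illustrate. The paper's argument, by contrast, is more elementary: it uses only the scaling of $D$ and no Fourier analysis.

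The square-wave variant at the end of your proposal is only sketched; it would need care with the uncancelled charge near $\partial B_1$. It is not needed, since the sine construction already closes the proof.
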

\begin{proof}
	Take $\phi\in C_c^\infty(B_1)$ such that $||\phi||_{L^2(B_1)}=1$ and $||\phi||_{L^\infty(B_1)}\leq 1$. Consider $n\in \N$, $\eps=(2n)^{-1/3}$ and define
	\[ \phi_{\eps,n}(x):=\frac{1}{\sqrt{2n}}\left( \sum_{i=1}^{n}\eps^{-3/2}\phi \Big(\frac{x-x_i}{\eps}\Big) - \sum_{i=n+1}^{2n}\eps^{-3/2}\phi \Big(\frac{x-x_i}{\eps}\Big)\right),  \]
	where $x_i\in\R^3$ such that $\supp\left\{\phi \Big(\frac{\cdot-x_i}{\eps}\Big)\right\}\cap  \supp\left\{\phi \Big(\frac{\cdot-x_j}{\eps}\Big)\right\}=\emptyset$ whenever $i\ne j$ and $\min\{|x_i-x_j| : i\not=j\}$ is diverging  as $n$ diverges. Then it is easy to see that $||\phi_{\eps,n}||_{L^2(\R^3)}=1$ and
	\[ |\supp\{ \phi_{\eps,n}\}|= 2n|B_\eps|=2n\eps^3|B_1|=|B_1|.\]
	 Moreover  
	\[ ||\phi_{\eps,n}||_{L^\infty(\R^3)} \leq \frac{\eps^{-3/2}}{\sqrt{2n}}= 1,\]
	so $\phi_{\eps,n}$ satisfies the $L^\infty$ constraint since true since $L\geq 1$. Then arguing as in Lemma \ref{le: inf D(u,u) non attained}, 
	\[ D(\phi_{\eps,n},\phi_{\eps,n})=n^{1/2}(2n)^{-2/3}D(\phi,\phi)\to 0, \]
	as $n$ diverges. Since 	\[ \inf_{\substack{\Omega \subseteq \R^3, \, \text{quasi-open}\\ |\Omega|=1}} \inf\left\{ D(u,u): u\in \H(\Omega), \int_{\Omega}u^2dx=1, ||u||_{L^\infty(\Omega)}\leq L\right\} \leq D(\phi_{\eps,n},\phi_{\eps,n}) \to 0,\]
	then by Lemma \ref{le:positivityD(u,u)} we conclude.         
\end{proof}

\paragraph{\bf Acknowledgements} The authors have been partially supported by the MUR and the European Union via PRIN2022 project P2022R537CS.
The authors are members of
INdAM-GNAMPA and have been partially supported by the INdAM-GNAMPA projects CUP E5324001950001 and CUP E53C25002010001.  \\

\bibliographystyle{abbrv}
\bibliography{bibliografia}

@article {Weinberger,
    AUTHOR = {Weinberger, H. F.},
     TITLE = {Remark on the preceding paper of {S}errin},
   JOURNAL = {Arch. Rational Mech. Anal.},
    VOLUME = {43},
      YEAR = {1971},
     PAGES = {319--320},
}

@book {DalMaso,
    AUTHOR = {Dal Maso, Gianni},
     TITLE = {An introduction to {$\Gamma$}-convergence},
    SERIES = {Progress in Nonlinear Differential Equations and their Applications},
    VOLUME = {8},
 PUBLISHER = {Birkh\"auser Boston Inc. Boston, MA},
      YEAR = {1993},
}

@article {Bucur,
    AUTHOR = {Bucur, Dorin},
     TITLE = {Minimization of the {$k$}-th eigenvalue of the {D}irichlet
              {L}aplacian},
   JOURNAL = {Arch. Ration. Mech. Anal.},
  FJOURNAL = {Archive for Rational Mechanics and Analysis},
    VOLUME = {206},
      YEAR = {2012},
    NUMBER = {3},
     PAGES = {1073--1083},
}

@article{FigalliZhang,
      title={Serrin's overdetermined problem in rough domains}, 
      author={Alessio Figalli and Yi Ru-Ya Zhang},
       JOURNAL = {J. Eur. Math. Soc., in press },
      year={2025},
}

@article {BenguriaPereira,
    AUTHOR = {Benguria, Rafael D. and Pereira, Marcone C.},
     TITLE = {Remarks on the spectrum of a non-local {D}irichlet problem},
   JOURNAL = {Bull. Lond. Math. Soc.},
  FJOURNAL = {Bull. Lond. Math. Soc.},
    VOLUME = {53},
      YEAR = {2021},
    NUMBER = {6},
     PAGES = {1898--1915},
}

@book{GilbargTrudinger,
	author = {Gilbarg, D. and Trudinger, N.S.},
	title = {Elliptic Partial Differential Equations of Seconds Order},
	journal = {Classics in Mathematics},
	volume = {224},
	publisher = {Springer Berlin, Heidelberg},
	year = {2001}
}

@book{Mazya,
	title={Sobolev Spaces: with Applications to Elliptic Partial Differential Equations},
	author={Maz'ya, V.},
	journal={Grundlehren der mathematischen Wissenschaften},
	year={2011},
	volume={342},
	publisher={Springer Berlin Heidelberg}
}

@book{LiebLoss,
	title={Analysis},
	author={Lieb, E.H. and Loss, M.},
	journal={Graduate Studies in Mathematics},
	volumne={14},
	year={2001},
	publisher={American Mathematical Society}
}

@article{AguileraAltCaffarelli,
	author = {Aguilera, N. and Alt, H. W. and Caffarelli, L. A.},
	title = {An optimization problem with volume constraint},
	journal = {SIAM J. Control Optim.},
	volume = {24},
	year = {1986},
	number = {2},
	pages = {191-198},
}

@article {FuscoPratelli,
    AUTHOR = {Fusco, Nicola and Pratelli, Aldo},
     TITLE = {Sharp stability for the {R}iesz potential},
   JOURNAL = {ESAIM Control Optim. Calc. Var.},
    VOLUME = {26},
      YEAR = {2020},
     PAGES = {Paper No. 113, 24},
}

@article {MazzoleniPratelli,
    AUTHOR = {Mazzoleni, Dario and Pratelli, Aldo},
     TITLE = {Existence of minimizers for spectral problems},
   JOURNAL = {J. Math. Pures Appl. (9)},
  FJOURNAL = {J. Math. Pures Appl.},
    VOLUME = {100},
      YEAR = {2013},
    NUMBER = {3},
     PAGES = {433--453},
}

@article {MazzoleniRuffini,
    AUTHOR = {Mazzoleni, Dario and Ruffini, Berardo},
     TITLE = {A spectral shape optimization problem with a nonlocal
              competing term},
   JOURNAL = {Calc. Var. Partial Differential Equations},
    VOLUME = {60},
      YEAR = {2021},
    NUMBER = {3},
     PAGES = {114},
}

@article{BucurMazzoleni,
  title = {A Surgery Result for the Spectrum of the Dirichlet Laplacian},
  volume = {47},
  number = {6},
  journal = {SIAM J. Math. Anal.},
  author = {Bucur,  Dorin and Mazzoleni,  Dario},
  year = {2015},
  pages = {4451-4466}
}

@article{BucurButtazzo,
  title = {On the characterization of the compact embedding of Sobolev spaces},
  volume = {44},
  journal = {Calc. Var. Partial Differential Equations},
  author = {Bucur,  Dorin and Buttazzo,  Giuseppe},
  year = {2011},
  pages = {455–475}
}

@book{VelichkovLecture,
  title = {Regularity of the One-phase Free Boundaries},
  journal = {Lecture Notes of the Unione Matematica Italiana},
  publisher = {Springer International Publishing},
  author = {Velichkov,  Bozhidar},
  year = {2023}
}

@article{BrianconHayouniPierre,
  title = {Lipschitz continuity of state functions in some optimal shaping},
  volume = {23},
  number = {1},
  journal = {Calc. Var. Partial Differential Equations},
  author = {Brian\c{c}on,  Tanguy and Hayouni,  Mohammed and Pierre,  Michel},
  year = {2005},
  pages = {13–32}
}

@article{GustafssonShahgholian,
  volume = {1996},
  author={Gustafsson, Bj\"orn and Shahgholian, Henrik},
  number = {473},
  journal = {J. Reine Angew. Math.},
  year = {1996},
  pages = {137–180},
  title={Existence and geometric properties of solutions of a free boundary problem in potential theory}
}

@article{AltCaffarelli,
author = {Alt, H.~W. and Caffarelli, L.~A.},
journal = {J. Reine Angew. Math.},
pages = {105-144},
title = {Existence and regularity for a minimum problem with free boundary.},
volume = {325},
year = {1981}
}

@article{BucurMazzoleniPratelliVelichkov,
  title = {Lipschitz Regularity of the Eigenfunctions on Optimal Domains},
  volume = {216},
  number = {1},
  journal = {Arch. Ration. Mech. Anal.},
  author = {Bucur,  Dorin and Mazzoleni,  Dario and Pratelli,  Aldo and Velichkov,  Bozhidar},
  year = {2014},
  pages = {117–151}
}

@article{DePhilippisMariniMukoseeva,
  title = {The sharp quantitative isocapacitary inequality},
  volume = {37},
  number = {6},
  journal = {Rev. Mat. Iberoam.},
  author = {De Philippis,  Guido and Marini,  Michele and Mukoseeva,  Ekaterina},
  year = {2021},
  pages = {2191–2228}
}

@book{Giaquinta,
	title={Multiple Integrals in the Calculus of Variations and non linear Elliptic Systems},
	author={M. Giaquinta},
	year={1983},
	publisher={Princeton University Press}
}

@article{MazzoleniTTerraciniVelichkov2017,
  title = {Regularity of the optimal sets for some spectral functionals},
  volume = {27},
  number = {2},
  journal = {Geom. Funct. Anal.},
  author = {Mazzoleni,  Dario and Terracini,  Susanna and Velichkov,  Bozhidar},
  year = {2017},
  pages = {373–426}
}

@article{FiMaPra2010,
  title={A mass transportation approach to quantitative isoperimetric inequalities},
  author={Figalli, Alessio and Maggi, Francesco and Pratelli, Aldo},
  journal={Invent. Math.},
  volume={182},
  number={1},
  pages={167--211},
  year={2010},
  publisher={Springer}
}

@article{FuMaPra2008,
  title={The sharp quantitative isoperimetric inequality},
  author={Fusco, Nicola and Maggi, Francesco and Pratelli, Aldo},
  journal= {Ann. of Math. (2)},
  pages={941--980},
  year={2008},
  publisher={JSTOR}
}

@article{MazzoleniMuratovRuffini,
  title={An optimal design problem for a charge qubit},
  author={Mazzoleni, Dario and Muratov, Cyrill B and Ruffini, Berardo},
  journal={Comm. Partial Differential Equations},
  volume={50},
  pages={1029--1073},
  year={2025},
  publisher={Taylor \& Francis}
}

@article{ChoksiPeletier,
  title={Small volume fraction limit of the diblock copolymer problem: {I}. {S}harp-interface functional},
  author={Choksi, Rustum and Peletier, Mark A},
  journal={SIAM J. Math. Anal.},
  volume={42},
  number={3},
  pages={1334--1370},
  year={2010},
  publisher={SIAM}
}

@article{KM1,
  title={On an isoperimetric problem with a competing nonlocal term {I}: {T}he planar case},
  author={Kn{\"u}pfer, Hans and Muratov, Cyrill B},
  journal={Comm. Pure Appl. Math.},
  volume={66},
  number={7},
  pages={1129--1162},
  year={2013},
  publisher={Wiley Online Library}
}

@article{KM2,
  title={On an isoperimetric problem with a competing nonlocal term {II}: {T}he general case},
  author={Kn{\"u}pfer, Hans and Muratov, Cyrill B},
  journal={Comm. Pure Appl. Math.},
  volume={67},
  number={12},
  pages={1974--1994},
  year={2014},
  publisher={Wiley Online Library}
}

@article{CiCSpa,
  title={Droplet minimizers of an isoperimetric problem with long-range interactions},
  author={Cicalese, Marco and Spadaro, Emanuele},
  journal={Comm. Pure Appl. Math.},
  volume={66},
  number={8},
  pages={1298--1333},
  year={2013},
  publisher={Wiley Online Library}
}

@article{CicLeo2012,
  title={A selection principle for the sharp quantitative isoperimetric inequality},
  author={Cicalese, Marco and Leonardi, Gian Paolo},
  journal={Arch. Ration. Mech. Anal.},
  volume={206},
  number={2},
  pages={617--643},
  year={2012},
  publisher={Springer}
}

@article{GNR1,
  title={Existence and stability for a non-local isoperimetric model of charged liquid drops},
  author={Goldman, Michael and Novaga, Matteo and Ruffini, Berardo},
  journal={Arch. Ration. Mech. Anal.},
  volume={217},
  number={1},
  pages={1--36},
  year={2015},
  publisher={Springer}
}

@article{GNR2025,
  title={Rigidity of the ball for an isoperimetric problem with strong capacitary repulsion},
  author={Goldman, Michael and Novaga, Matteo and Ruffini, Berardo},
  journal={J. Eur. Math. Soc.},
  volume={27},
  number={10},
  pages={4159--4199},
  year={2025},
  publisher={European Mathematical Society}
}

@article{BrascoDePhilippisVelichkov,
  title={Faber--Krahn inequalities in sharp quantitative form},
  author={Brasco, Lorenzo and De Philippis, Guido and Velichkov, Bozhidar},
  journal={Duke Math. J.},
  volume={164},
  number={9},
  pages={1777--1831},
  year={2015},
  publisher={Duke University Press}
}

@article{Serrin,
  title={A symmetry problem in potential theory},
  author={Serrin, James},
  journal={Arch. Ration. Mech. Anal.},
  volume={43},
  number={4},
  pages={304--318},
  year={1971},
  publisher={Springer}
}

@article{GiaquintaGiusti,
  title = {On the regularity of the minima of variational integrals},
  volume = {148},
  journal = {Acta Math.},
  author = {Giaquinta,  Mariano and Giusti,  Enrico},
  year = {1982},
  pages = {31–46}
}

@misc{BDVV26,
      title={On a Sobolev critical problem for the superposition of a local and nonlocal operator with the ``wrong sign''}, 
      author={Stefano Biagi and Serena Dipierro and Enrico Valdinoci and Eugenio Vecchi},
      year={preprint, 2026},
      eprint={2601.07521},
}

@article{ACO09,
  title={Uniform energy distribution for an isoperimetric problem with long-range interactions},
  author={Alberti, Giovanni and Choksi, Rustum and Otto, Felix},
  journal={J. Amer. Math. Soc.},
  volume={22},
  number={2},
  pages={569--605},
  year={2009},
  publisher={American Mathematical Society}
}

@misc{DR26,
  author        = {Daneri, Sara and Runa, Eris},
  title         = {A rigorous approach to pattern formation for isotropic isoperimetric problems with competing nonlocal interactions},
  year          = {preprint, 2024},
}

@book{henrotpierre2018,
  title={Shape variation and optimization: A geometrical analysis},
  author={Henrot, Antoine and Pierre, Michel},
  volume={28},
  series={EMS Tracts in Mathematics},
  year={2018},
  publisher={European Mathematical Society (EMS)},
  address={Z{\"u}rich}
}

@book{Maggibook,
  title={Sets of Finite Perimeter and Geometric Variational Problems: An Introduction to Geometric Measure Theory},
  author={Maggi, Francesco},
  volume={135},
  series={Cambridge Studies in Advanced Mathematics},
  year={2012},
  publisher={Cambridge University Press},
  address={Cambridge}
}

@book{giaqmart,
  title={An Introduction to the Regularity Theory for Elliptic Systems, Harmonic Maps and Minimal Graphs},
  author={Giaquinta, Mariano and Martinazzi, Luca},
  series={Appunti. Scuola Normale Superiore di Pisa (Nuova Serie)},
  volume={11},
  year={2012},
  publisher={Edizioni della Normale},
  address={Pisa}
}

\end{document}